\documentclass[12pt,oneside,reqno]{amsart}

\usepackage{amsmath,amssymb,amsthm,textcomp}
\usepackage{dsfont}
\usepackage{amsfonts,graphicx}
\usepackage[mathscr]{eucal}
\usepackage{color}
\usepackage{csquotes}
\usepackage{relsize}
\usepackage{hyperref}
\usepackage[backend=bibtex,%
firstinits=true,%
doi=false,%
isbn=true,%
url=false,%
maxnames=99]{biblatex}%

\AtEveryBibitem{\clearfield{issn}}
\AtEveryCitekey{\clearfield{issn}}
\numberwithin{equation}{section}
\DeclareNameAlias{sortname}{last-first}
\theoremstyle{definition}
\usepackage{mathtools}
\numberwithin{equation}{section}

\newcommand{\ncom}{\newcommand}

\ncom{\beq}{\begin{equation}}
	\ncom{\eeq}{\end{equation}}
\ncom{\bea}{\begin{eqnarray*}}
	\ncom{\eea}{\end{eqnarray*}}
\ncom{\beqa}{\begin{eqnarray}}
	\ncom{\eeqa}{\end{eqnarray}}
\ncom{\nno}{\nonumber}
\ncom{\non}{\nonumber}
\ncom{\ds}{\displaystyle}
\ncom{\half}{\frac{1}{2}}
\ncom{\mbx}{\makebox{.25cm}}
\ncom{\hs}{\mbox{\hspace{.25cm}}}
\ncom{\rar}{\rightarrow}
\ncom{\Rar}{\Rightarrow}
\ncom{\noin}{\noindent}
\ncom{\bc}{\begin{center}}
	\ncom{\ec}{\end{center}}
\ncom{\sz}{\scriptsize}
\ncom{\rf}{\ref}
\ncom{\s}{\sqrt{2}}
\ncom{\sgm}{\sigma}
\ncom{\Sgm}{\Sigma}
\ncom{\psgm}{\sigma^{\prime}}
\ncom{\dt}{\delta}
\ncom{\Dt}{\Delta}
\ncom{\lmd}{\lambda}
\ncom{\Lmd}{\Lambda}
\ncom{\Th}{\Theta}
\ncom{\e}{\eta}
\ncom{\eps}{\epsilon}
\ncom{\pcc}{\stackrel{P}{>}}
\ncom{\lp}{\stackrel{L_{p}}{>}}
\ncom{\dist}{{\rm\,dist}}
\ncom{\sspan}{{\rm\,span}}
\ncom{\re}{{\rm Re\,}}
\ncom{\im}{{\rm Im\,}}
\ncom{\sgn}{{\rm sgn\,}}
\ncom{\ba}{\begin{array}}
	\ncom{\ea}{\end{array}}
\ncom{\hone}{\mbox{\hspace{1em}}}
\ncom{\htwo}{\mbox{\hspace{2em}}}
\ncom{\hthree}{\mbox{\hspace{3em}}}
\ncom{\hfour}{\mbox{\hspace{4em}}}
\ncom{\vone}{\vskip 2ex}
\ncom{\vtwo}{\vskip 4ex}
\ncom{\vonee}{\vskip 1.5ex}
\ncom{\vthree}{\vskip 6ex}
\ncom{\vfour}{\vspace*{8ex}}
\ncom{\norm}{\|\;\;\|}
\ncom{\integ}[4]{\int_{#1}^{#2}\,{#3}\,d{#4}}
\ncom{\vspan}[1]{{{\rm\,span}\{ #1 \}}}
\ncom{\dm}[1]{ {\displaystyle{#1} } }
\ncom{\ri}[1]{{#1} \index{#1}}

\newtheorem{theorem}{\bf Theorem}[section]
\newtheorem{remark}{\bf Remark}[section]
\newtheorem{proposition}{Proposition}[section]
\newtheorem{lemma}{Lemma}[section]
\newtheorem{corollary}{Corollary}[section]

\newtheoremstyle
{remarkstyle}
{}
{11pt}
{}
{}
{\bfseries}
{:}
{     }
{\thmname{#1} \thmnumber{#2} }

\theoremstyle{remarkstyle}

\def\eps{\varepsilon}

\begin{document}
	\title{On Time-Changed Birth-Death Processes with Catastrophes}
	\author[Kuldeep Kumar Kataria]{Kuldeep Kumar Kataria}
	\address{Kuldeep Kumar Kataria, Department of Mathematics, Indian Institute of Technology Bhilai, Durg 491002, India.}
	\email{kuldeepk@iitbhilai.ac.in}
	\author[Rohini Bhagwanrao Pote]{Rohini Bhagwanrao Pote}
	\address{Rohini Bhagwanrao pote, Department of Mathematics, Indian Institute of Technology Bhilai, Durg 491002, India.}
	\email{rohinib@iitbhilai.ac.in}
	\subjclass[2020]{Primary: 60K15; Secondary: 26A33}
	\keywords{birth-death process; catastrophe;  stable subordinator; Mittag-Leffler function; tempered stable subordinator.}
	\date{\today}
	
\begin{abstract}
We study two time-changed variants of the birth–death process with catastrophe where the time-changing components are the first hitting times of the stable subordinator and the tempered stable subordinator. For both the processes, we derive the governing system of fractional differential equations for their state probabilities. The Laplace transforms of these state probabilities are obtained in terms of those of the corresponding time-changed birth–death processes without catastrophes. We obtain the distribution of catastrophe occurrence times as well as the sojourn times within non-zero states. We study  distributional properties of the first visit time to state zero in a particular case. Also, the first occurrence time of an effective catastrophe is studied. Moreover, we study the time-changed linear birth-death processes with catastrophes, derive the explicit expressions for its state probabilities, expectation and variance. For a specific case, we compare the expectation plots across different parameter values and provide an algorithm for simulating sample paths with illustrative plots.

\end{abstract}
\maketitle 

\section{Introduction}\label{sec1}
A birth–death process is a continuous-time and discrete state space Markov process that serves as a model for population growth over time. In a birth-death process with catastrophe (BDPC), there is a possibility of transition from any state to state zero via catastrophe. The extinction time of a birth-death process with catastrophes was studied by Brockwell (1986) whereas Chao and Zheng (2003) studied an immigration birth–death process with total catastrophes, deriving both its transient and equilibrium behaviour. Di Crescenzo et al. (2003, 2018) studied the $M/M/1$ queue with catastrophes, and the first occurrence time of an effective catastrophe was studied by Di Crescenzo et al. (2008).

Let $\{\mathcal{N}^{\nu}(t)\}_{t\geq0}$ be a BDPC with state space $I=\{0,1,2,\dots\}$. For $t\geq0$, let $p_{m,n}^{\nu}(t)=\mathrm{Pr}\{\mathcal{N}^{\nu}(t)=n|\mathcal{N}^{\nu}(0)=m\}$, $m,n\in I$ denote its state probabilities. The system of differential equations that governs its state probabilities is as follows (see Di Crescenzo {\it et al.} (2008)):
\begin{align}\label{DE1}
	\left.
	\begin{aligned}
		\frac{\mathrm{d}}{\mathrm{d}t}p_{m,0}^{\nu}(t)&=-(\lambda_{0}+\nu)p_{m,0}^{\nu}(t)+\mu_{1}p_{m,1}^{\nu}(t)+\nu,\\
		\frac{\mathrm{d}}{\mathrm{d}t}p_{m,n}^{\nu}(t)&=-(\lambda_{n}+\mu_{n}+\nu)p_{m,n}^{\nu}(t)+\lambda_{n-1}p_{m,n-1}^{\nu}(t)+\mu_{n+1}p_{m,n+1}^{\nu}(t),\,n=1,2,\dots
	\end{aligned}
	\right\}
\end{align}
with initial condition $p_{m,n}^{\nu}(0)=\delta_{m,n}=\begin{cases}
	1,\,n=m,\\
	0,\,\text{otherwise}.
\end{cases}$

Here, $\lambda_{n}$'s are birth rates, $\mu_{n}$'s are death rates and $\nu$ is the rate of occurrence of catastrophe at any state in $I$.

Also, let $\{\mathcal{N}(t)\}_{t\geq0}$ be the corresponding birth-death process without catastrophe (BDP). For $\nu=0$, $\{\mathcal{N}^{\nu}(t)\}_{t\geq0}$ reduces to $\{\mathcal{N}(t)\}_{t\geq0}$. Its state probabilities $p_{m,n}(t)=\mathrm{Pr}\{\mathcal{N}(t)=n|\mathcal{N}(0)=m\}$, $m,n\in I$, $t\geq0$ satisfy the system of differential equations given in \eqref{DE1} with $\nu=0$, and subject to the initial condition $p_{m,n}(0)=\delta_{m,n}$.

A time-changed stochastic process is defined by the composition of a random process with a non-decreasing random process. Typically, these two processes are assumed to be independent. Over the last two decades, these processes have been studied by many authors due to their applications in various fields where time does not occur in a deterministic manner. For example, Beghin and Orsingher (2009) and Meerschaert et al. (2011) studied the fractional Poisson process. The space-fractional Poisson process was introduced by Orsingher and Polito (2012), while time-changed birth–death processes were studied by Orsingher and Polito (2011), Kataria and Vishwakarma (2025a), {\it etc}. Some recent works in this direction are fractional queues with catastrophes and their transient behaviour (see Ascione {\it et al.} (2018)), tempered fractional Poisson processes and fractional equations with Z-transform (see Gupta et al. (2020)), the tempered Erlang queue with multiple arrivals (see Dhillon and Kataria (2025)), Erlang queue with multiple arrivals and its time-changed variant (see Pote and Kataria (2025)), {\it etc}.

In this paper, we introduce and study two time-changed variants of the BDPC where the time-changing components are the inverse stable subordinator and the inverse tempered stable subordinator. Also, we study their linear versions. The paper is organized as follows:

In Section \ref{SecPreliminaries}, we give some definitions and known results on the Mittag-Leffler function, fractional derivatives, stable subordinator and its inverse, and results related to their Laplace transforms.

In Section \ref{SecTime-ChngBDPC}, we define time-changed birth-death process with catastrophe (time-changed BDPC) and time-changed birth-death process without catastrophe (time-changed BDP). The  time-changing component considered is the first hitting time of a stable subordinator known as the inverse stable subordinator. We derive the governing system of fractional differential equations for its state probabilities, and obtain the Laplace transform of state probabilities of time-changed BDPC in terms of that of time-changed BDP. We obtain the distribution of occurrence times of the catastrophes, and that of sojourn times in non-zero states for the time-changed BDPC. 
For the time-changed BDPC with zero as an absorbing state, it is shown that the first visit time to state zero is equal in distribution to the minimum of first visit time to state zero for the time-changed BDP and a Mittag-Leffler random variable. Moreover, we derive the Laplace transform of its probability density function (pdf). The effective catastrophe is discussed, and the Laplace transform of the pdf of its first occurrence time is obtained.

In Section \ref{sec4}, we study a particular case of the time-changed BDPC, known as the time-changed linear birth-death process with catastrophe, in short, the time-changed LBDPC. This is obtained by setting birth and death rates as $\lambda_{n}=n\lambda$ and $\mu_{n}=n\mu$ for any state $n\in I$ in the time-changed BDPC. We derive its extinction probability, state probabilities, expectation and variance. We compare the plots of the expectation for different parameter values and provide a table of numerical examples for both expectation and variance. Also, we give an algorithm to simulate the time-changed LBDPC and provide the sample path simulation plots for it.

In Section \ref{SecTempBDPC}, we study another time-changed variant of the BDPC, namely, the tempered BDPC. It is obtained by time-changing the BDPC with the first hitting time of the tempered stable subordinator. We derive the governing system of fractional differential equations for its state probabilities, the Laplace transform of these state probabilities, and the distributions of the occurrence times of catastrophes and the sojourn times. For the case in which zero is an absorbing state, we show the equality in distribution for the first visit time of state zero of the tempered BDPC to the minimum of the first visit time to state zero of the tempered BDP and a tempered Mittag-Leffler random variable. We derive the Laplace transform of its pdf, expectation and variance. Also, we study the first occurrence time of an effective catastrophe, obtain the Laplace transform of its pdf, expectation and variance. Moreover, we define the tempered linear birth-death process with catastrophe, that is, the tempered LBDPC. We state the results which includes its extinctions probability, state probabilities, expectation and variance.

\section{Preliminaries}\label{SecPreliminaries}
Here, we collect some known definitions and results related to Mittag-Leffler function, fractional derivatives, subordinators and their inverse.


\subsection{Mittag-Leffler function}\label{SubSecMittagLeffler}
Let $\mathbb{R}$ denote the set of real numbers. For $\alpha>0$, $\beta>0$ and $\gamma>0$, the three-parameter Mittag-Leffler function is defined as follows (see Kilbas {\it et al.} (2006)):
\begin{equation*}\label{Mittag12}
	E_{\alpha,\beta}^{\gamma}(t)\coloneqq\sum_{r=0}^{\infty}\frac{\Gamma(\gamma+r)t^{r}}{r!\Gamma(\gamma)\Gamma(\alpha r+\beta)},\,t\in\mathbb{R}.
\end{equation*}
For $\gamma=1$, it reduces to the two-parameter Mittag-Leffler function which is denoted by $E_{\alpha,\beta}(\cdot)$. Further, for $\gamma=\beta=1$, it is known as the one-parameter Mittag-Leffler function, denoted by $E_{\alpha}(\cdot)$.

The Laplace transform of auxiliary function of three-parameter Mittag-Leffler function is given by
\begin{equation}\label{ltm12}
	\mathbb{L}(t^{\beta-1}E_{\alpha,\beta}^{\gamma}(\omega t^{\alpha}))(z)=\frac{z^{\alpha\gamma-\beta}}{(z^{\alpha}-\omega)^{\gamma}},\, \omega\in\mathbb{R},\,z>|\omega|^{1/\alpha},\,t>0.
\end{equation}

The distribution function of Mittag-Leffler random variable $T$ with parameters $\alpha$ and $\lambda$ is
\begin{equation*}
	F_{T}(t)\coloneqq\mathrm{Pr}\{T\leq t\}=1-E_{\alpha}(-\lambda t^{\alpha}).
\end{equation*}


\subsection{Fractional derivatives}\label{SubSecFractionalDerivative}
Let $\theta>0$ and $0<\alpha<1$. 
The Riemann-Liouville fractional derivative of a function $g(\cdot)$ is defined as follows (see  Kilbas {\it et al.} (2006)):
\begin{equation*}
	D^{\alpha}_{t}g(t)\coloneqq\frac{1}{\Gamma(1-\alpha)}\frac{\mathrm{d}}{\mathrm{d}t}\int_{0}^{t}\frac{g(y)}{(t-y)^{\alpha}}\mathrm{d}y.
\end{equation*}
Also, the Riemann-Liouville tempered fractional derivative is defined as (see Alrawashdeh {\it et al.} (2017), Eq. (2.6)):
\begin{equation*}
	D^{\theta,\alpha}_{t}g(t)\coloneqq e^{-\theta t}D^{\alpha}_{t}(e^{\theta t}g(t))-\theta^{\alpha}g(t).
\end{equation*}

The Caputo fractional derivative of a function $g(\cdot)$ is defined as follows (see Kilbas {\it{et al.}} (2006)):
\begin{equation*}\label{caputo}
	\frac{\mathrm{d}^{\alpha}}{\mathrm{d}t^{\alpha}}g(t)=
	\begin{cases}
		\frac{1}{\Gamma(1-\alpha)}\int_{0}^{t}(t-y)^{-\alpha}g'(y)\mathrm{d}y,\, 0<\alpha<1, \\
		g'(t), \, \alpha=1
	\end{cases}
\end{equation*}
whose Laplace transform is given by
\begin{equation}\label{caputolp}
	\mathbb{L}\Big(\frac{\mathrm{d}^{\alpha}}{\mathrm{d}t^{\alpha}}g(t)\Big)(z)=z^{\alpha}\Tilde{g}(z)-z^{\alpha-1}g(0),
\end{equation} 
where $\tilde{g}(z)$ denotes the Laplace transform of $g(t)$. 

Moreover, the definition of Caputo tempered fractional derivative is given by (see Alrawashdeh {\it et al.} (2017), Eq. (2.8))
\begin{equation*}\label{Caputotempered}
	\frac{\mathrm{d}^{\theta,\alpha}}{\mathrm{d}t^{\theta,\alpha}}g(t)=D^{\theta,\alpha}_{t}g(t)-\frac{f(0)}{\Gamma(1-\alpha)}\int_{t}^{\infty}\alpha e^{-\theta y}y^{-\alpha-1}\mathrm{d}y,
\end{equation*}
and its Laplace transform is 
\begin{equation}\label{CaputotemperedLT}
	\mathbb{L}\Big(\frac{\mathrm{d}^{\theta,\alpha}}{\mathrm{d}t^{\theta,\alpha}}g(t)\Big)(z)=\phi_{\theta,\alpha}(z)\Tilde{g}(z)-\frac{\phi_{\theta,\alpha}(z)}{z}g(0),\,z>0,
\end{equation}
where $\phi_{\theta,\alpha}(z)=(z+\theta)^{\alpha}-\theta^{\alpha}$.

\subsection{Stable subordinators and their inverse}\label{SubSecSubordinator}
Let $\{D_{\alpha}(t)\}_{t\geq0}$, $0<\alpha<1$ be a one dimensional L\'evy process whose Laplace transform is given by $\mathbb{E}(e^{-zD_{\alpha}(t)})=e^{-tz^{\alpha}}$, $z>0$. It is called an $\alpha$-stable subordinator (see Applebaum (2009)). Let $\{Y_{\alpha}(t)\}_{t\geq0}$ be its first hitting time, also known as the inverse $\alpha$-stable subordinator, that is, $Y_{\alpha}(t)=\inf\{s\geq0:D_{\alpha}(s)>t\}$. Its probability density function (pdf) has the following Laplace transform (see Meerschaert and Straka (2013)):
\begin{equation}\label{ltLnu12}
	\mathbb{L}(\mathrm{Pr}\{Y_{\alpha}(t)\in\mathrm{d}y\})(z)=z^{\alpha-1}e^{-yz^{\alpha}}\mathrm{d}y, \, z>0.
\end{equation}

A one-dimensional L\'evy process
$\{D_{\theta,\alpha}(t)\}_{t\geq 0}$ whose Laplace transform is $\mathbb{E}(e^{-zD_{\theta,\alpha}(t)})=e^{-t\phi_{\theta,\alpha}(z)}$, $z>0$ is known as the tempered stable subordinator. Here, $\theta>0$ is its tempering parameter and $0<\alpha<1$ its stability index such that its Laplace exponent is given by $\phi_{\theta,\alpha}(z)=(z+\theta)^{\alpha}-\theta^{\alpha}$. Its first hitting time process $\{Y_{\theta,\alpha}(t)\}_{t\geq0}$, that is, $Y_{\theta,\alpha}(t)=\inf\{s\geq0:D_{\theta,\alpha}(s)>t\}$ is called as the inverse tempered stable subordinator. Its pdf has the following Laplace transform (see Meerschaert \textit{et al.} (2013)):
\begin{equation}\label{temperedPDFLT}
	\mathbb{L}(\mathrm{Pr}\{Y_{\theta,\alpha}(t)\in\mathrm{d}y\})(z)=\frac{\phi_{\theta,\alpha}(z)}{z}e^{-y\phi_{\theta,\alpha}(z)}\mathrm{d}y, \, z>0.
\end{equation}

\section{Time-Changed Birth-Death Process with Catastrophe}\label{SecTime-ChngBDPC}
In this section, we introduce a time-changed birth-death process with catastrophe where the time-changing component is the inverse stable subordinator. We call it the time-changed BDPC. 

Let $\{\mathcal{N}^{\nu}(t)\}_{t\geq0}$ be a BDPC with state space $I=\{0,1,2,\dots\}$ such that the state dependent positive birth rates are $\lambda_{n}$'s for $n=1,2,\dots$ with $\lambda_{0}\geq0$, the positive death rates are $\mu_{n}$'s for $n=1,2,3,\dots$, and at any state catastrophe happens with rate $\nu\geq0$. Also, let $\{Y_{\alpha}(t)\}_{t\geq0}$ be an inverse stable subordinator with $Y_{\alpha}(0)=0$ such that it is independent of the BDPC $\{\mathcal{N}^{\nu}(t)\}_{t\geq0}$. Then, we define the time-changed BDPC $\{\mathcal{N}^{\alpha,\nu}(t)\}_{t\geq0}$, $0<\alpha\leq1$, $\nu\geq0$ as follows: 
\begin{equation*}
	\mathcal{N}^{\alpha,\nu}(t)\coloneqq\mathcal{N^{\nu}}(Y_{\alpha}(t)),\,0<\alpha<1,
\end{equation*}
with $\mathcal{N}^{1,\nu}(t)=\mathcal{N}^{\nu}(t)$, $t\geq0$. 

Also, the time-changed birth-death process without catastrophe, that is, the time-changed BDP $\{\mathcal{N}^{\alpha,0}(t)\}_{t\geq0}$, $0<\alpha\leq1$ is defined as
\begin{equation*}
	\mathcal{N}^{\alpha,0}(t)\coloneqq\mathcal{N}(Y_{\alpha}(t)),\,0<\alpha<1,
\end{equation*}
with $\mathcal{N}^{1,0}(t)=\mathcal{N}(t)$, $t\geq0$. For $\nu=0$, the time-changed BDPC reduces to the time-changed BDP.

For $m,n\in I$, let
\begin{equation*}
	p_{m,n}^{\alpha,\nu}(t)=\mathrm{Pr}\{ \mathcal{N}^{\alpha,\nu}(t)=n|\mathcal{N}^{\alpha,\nu}(0)=m\}
\end{equation*}
and
\begin{equation*}
	p_{m,n}^{\alpha,0}(t)=\mathrm{Pr}\{\mathcal{ N}^{\alpha,0}(t)=n|\mathcal{N}^{\alpha,0}(0)=m\}
\end{equation*} 
be the state probabilities of $\{\mathcal{N}^{\alpha,\nu}(t)\}_{t\geq0}$ and $\{\mathcal{N}^{\alpha,0}(t)\}_{t\geq0}$, respectively.



\begin{theorem}\label{thm1}
The state probabilities of time-changed BDPC $\{\mathcal{N}^{\alpha,\nu}(t)\}_{t\geq0}$ solve the following system of fractional differential equations:
\begin{align*}\label{DE2}
	\frac{\mathrm{d}^{\alpha}}{\mathrm{d}t^{\alpha}}p_{m,0}^{\alpha,\nu}(t)&=-(\lambda_{0}+\nu)p_{m,0}^{\alpha,\nu}(t)+\mu_{1}p_{m,1}^{\alpha,\nu}(t)+\nu,\\
	\frac{\mathrm{d}^{\alpha}}{\mathrm{d}t^{\alpha}}p_{m,n}^{\alpha,\nu}(t)&=-(\lambda_{n}+\mu_{n}+\nu)p_{m,n}^{\alpha,\nu}(t)+\lambda_{n-1}p_{m,n-1}^{\alpha,\nu}(t)+\mu_{n+1}p_{m,n+1}^{\alpha,\nu}(t),\,n=1,2,\dots
\end{align*}
with initial condition $p_{m,n}^{\alpha,\nu}(0)=\delta_{m,n}$.
Here, $\frac{\mathrm{d}^{\alpha}}{\mathrm{d}t^{\alpha}}$ is the Caputo fractional derivative of order $0<\alpha\leq1$ defined in Section \ref{SubSecFractionalDerivative}.
\end{theorem}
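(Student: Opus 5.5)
The plan is to work with Laplace transforms in $t$, using the subordination representation. By independence of $\{Y_{\alpha}(t)\}_{t\geq0}$ and $\{\mathcal{N}^{\nu}(t)\}_{t\geq0}$, conditioning on $Y_{\alpha}(t)$ gives
\begin{equation*}
p_{m,n}^{\alpha,\nu}(t)=\int_{0}^{\infty}p_{m,n}^{\nu}(y)\,\mathrm{Pr}\{Y_{\alpha}(t)\in\mathrm{d}y\}.
\end{equation*}
Taking Laplace transforms in $t$ and using \eqref{ltLnu12}, together with Fubini (the integrand is nonnegative and $p^{\nu}_{m,n}\leq1$), yields
\begin{equation*}
\tilde{p}_{m,n}^{\alpha,\nu}(z)=z^{\alpha-1}\tilde{p}_{m,n}^{\nu}(z^{\alpha}),\quad z>0,
\end{equation*}
where $\tilde{p}^{\nu}_{m,n}$ is the Laplace transform of the BDPC state probability. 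The initial condition is immediate: $Y_{\alpha}(0)=0$ gives $p_{m,n}^{\alpha,\nu}(0)=p_{m,n}^{\nu}(0)=\delta_{m,n}$. The case $\alpha=1$ is just \eqref{DE1}, so we take $0<\alpha<1$.

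Next, we take Laplace transforms of \eqref{DE1}. Since $p^{\nu}_{m,n}$ is bounded and continuously differentiable, we get, for $s>0$,
\begin{align*}
s\tilde{p}_{m,0}^{\nu}(s)-\delta_{m,0}&=-(\lambda_{0}+\nu)\tilde{p}_{m,0}^{\nu}(s)+\mu_{1}\tilde{p}_{m,1}^{\nu}(s)+\frac{\nu}{s},\\
s\tilde{p}_{m,n}^{\nu}(s)-\delta_{m,n}&=-(\lambda_{n}+\mu_{n}+\nu)\tilde{p}_{m,n}^{\nu}(s)+\lambda_{n-1}\tilde{p}_{m,n-1}^{\nu}(s)+\mu_{n+1}\tilde{p}_{m,n+1}^{\nu}(s).
\end{align*}
Substitute $s=z^{\alpha}$ and multiply each equation by $z^{\alpha-1}$. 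The left side becomes $z^{\alpha}\tilde{p}^{\alpha,\nu}_{m,n}(z)-z^{\alpha-1}\delta_{m,n}$, which by \eqref{caputolp} is the Laplace transform of $\frac{\mathrm{d}^{\alpha}}{\mathrm{d}t^{\alpha}}p^{\alpha,\nu}_{m,n}(t)$. On the right, each $z^{\alpha-1}\tilde{p}^{\nu}_{m,k}(z^{\alpha})$ becomes $\tilde{p}^{\alpha,\nu}_{m,k}(z)$. The inhomogeneous term becomes $z^{\alpha-1}\nu/z^{\alpha}=\nu/z$, which is the Laplace transform of the constant $\nu$. Inverting the Laplace transform, using uniqueness for continuous functions, gives the stated system for all $t>0$.

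The main obstacle is justifying the use of \eqref{caputolp}, which requires the Caputo derivative of $p^{\alpha,\nu}_{m,n}$ to exist and be Laplace transformable. I would handle this by reading the argument in reverse. Define the right-hand side $h_{m,n}(t)$ of the system, which is bounded and continuous in $t$. The computation above shows $z^{\alpha}\tilde{p}^{\alpha,\nu}_{m,n}(z)-z^{\alpha-1}\delta_{m,n}=\tilde h_{m,n}(z)$. Equivalently, $p^{\alpha,\nu}_{m,n}(t)=\delta_{m,n}+I^{\alpha}h_{m,n}(t)$, where $I^{\alpha}$ is the Riemann--Liouville integral. This is precisely the integral form of the Caputo equation, and for continuous $h_{m,n}$ it is equivalent to the differential form. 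Continuity of $p^{\alpha,\nu}_{m,n}$ in $t$ follows from continuity of the paths of $Y_{\alpha}$ and continuity of $p^{\nu}_{m,n}$. For infinitely many states, the transform of \eqref{DE1} is used termwise, one equation for each $n$, so no summability over $n$ is needed.
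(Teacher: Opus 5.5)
Your proposal is correct and follows the paper's proof essentially step for step: you use the subordination formula, the transform $\tilde{p}_{m,n}^{\alpha,\nu}(z)=z^{\alpha-1}\tilde{p}_{m,n}^{\nu}(z^{\alpha})$, the Laplace-transformed system \eqref{DE1} evaluated at $s=z^{\alpha}$ and multiplied by $z^{\alpha-1}$, and inversion via \eqref{caputolp}. Your extra step of rewriting the result as the Volterra equation $p_{m,n}^{\alpha,\nu}=\delta_{m,n}+I^{\alpha}h_{m,n}$ is a rigor supplement the paper omits, not a different route. Strictly, it gives the equation in the regularized sense $D^{\alpha}_{t}(p-p(0))=h$. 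This agrees with the paper's $g'$-based Caputo definition because $p_{m,n}^{\alpha,\nu}$ is absolutely continuous.
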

\begin{proof}
For $m,n\in I$ and $t\geq0$, we have
\begin{equation}\label{pmn}
	p_{m,n}^{\alpha,\nu}(t)=\mathrm{Pr}\{\mathcal{N}^{\nu}(Y_{\alpha}(t))=n|\mathcal{N}^{\nu}(0)=m\}
	=\int_{0}^{\infty}p_{m,n}^{\nu}(y)\mathrm{Pr}\{Y_{\alpha}(t)\in\mathrm{d}y\}.
\end{equation}
On taking the Laplace transform of \eqref{pmn} and using \eqref{ltLnu12}, we get
\begin{align}\label{pmnz}
	\tilde{p}_{m,n}^{\alpha,\nu}(z)&=z^{\alpha-1}\int_{0}^{\infty}p_{m,n}^{\nu}(y)e^{-yz^{\alpha}}\mathrm{d}y
	=z^{\alpha-1}\tilde{p}_{m,n}^{\nu}(z^{\alpha}),\, z>0.
\end{align}
Similarly, on taking the Laplace transform of \eqref{DE1}, we have
\begin{align}\label{ltDE1}
	\left.
	\begin{aligned}
		z\tilde{p}_{m,0}^{\nu}(z)-p_{m,0}^{\nu}(0)&=-(\lambda_{0}+\nu)\tilde{p}_{m,0}^{\nu}(z)+\mu_{1}\tilde{p}_{m,1}^{\nu}(z)+\frac{\nu}{z},\\
		z\tilde{p}_{m,n}^{\nu}(z)-p_{m,n}^{\nu}(0)&=-(\lambda_{n}+\mu_{n}+\nu)\tilde{p}_{m,n}^{\nu}(z)+\lambda_{n-1}\tilde{p}_{m,n-1}^{\nu}(z)+\mu_{n+1}\tilde{p}_{m,n+1}^{\nu}(z).
	\end{aligned}
	\right\}
\end{align}
By using \eqref{pmnz}, \eqref{ltDE1} and the fact that $p_{m,n}^{\alpha,\nu}(0)=p_{m,n}^{\nu}(0)$, we obtain
{\small\begin{align}\label{ltDE2}
	\left. 
	\begin{aligned}
		z^{\alpha}\tilde{p}_{m,0}^{\alpha,\nu}(z)-z^{\alpha-1}p_{m,0}^{\alpha,\nu}(0)&=-(\lambda_{0}+\nu)\tilde{p}_{m,0}^{\alpha,\nu}(z)+\mu_{1}\tilde{p}_{m,1}^{\alpha,\nu}(z)+\frac{\nu}{z},\\
		z^{\alpha}\tilde{p}_{m,n}^{\alpha,\nu}(z)-z^{\alpha-1}p_{m,n}^{\alpha,\nu}(0)&=-(\lambda_{n}+\mu_{n}+\nu)\tilde{p}_{m,n}^{\alpha,\nu}(z)+\lambda_{n-1}\tilde{p}_{m,n-1}^{\alpha,\nu}(z)+\mu_{n+1}\tilde{p}_{m,n+1}^{\alpha,\nu}(z).
	\end{aligned}
	\right\}
\end{align}}
On taking the inverse Laplace transform of \eqref{ltDE2} and using \eqref{caputolp}, we get the required result.
\end{proof}

\begin{remark}
For $\alpha=1$ and $\nu=0$, the system of differential equations for the state probabilities of time-changed BDPC in Theorem \ref{thm1} reduces to that of BDPC (see Di Crescenzo {\it et al.} (2008), Eq. (1)) and time-changed BDP (see Kataria and Vishwakarma (2025b), Eq. (1.2)), respectively. 
\end{remark}

\begin{proposition}\label{prop1}
The Laplace transform of the state probabilities of time-changed BDPC and time-changed BDP are related as follows:
\begin{equation}\label{pmnzz}
	\tilde{p}_{m,n}^{\alpha,\nu}(z)=(\nu+z^{\alpha})^{\frac{1-\alpha}{\alpha}}\Big(z^{\alpha-1}\tilde{p}_{m,n}^{\alpha,0}((\nu+z^{\alpha})^{1/\alpha})+\frac{\nu}{z}\tilde{p}_{0,n}^{\alpha,0}((\nu+z^{\alpha})^{1/\alpha})\Big),\,z>0.
\end{equation}
\end{proposition}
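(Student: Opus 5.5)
The plan is to reduce everything to Laplace transforms. The bridge is the subordination identity \eqref{pmnz}, which holds for every $\nu\geq0$, including $\nu=0$. On the left we use it with catastrophes, $\tilde{p}_{m,n}^{\alpha,\nu}(z)=z^{\alpha-1}\tilde{p}_{m,n}^{\nu}(z^{\alpha})$. On the right we use it without catastrophes, $\tilde{p}_{m,n}^{\alpha,0}(w)=w^{\alpha-1}\tilde{p}_{m,n}(w^{\alpha})$. What remains is the classical relation between the BDPC and the BDP, stated in Laplace form.

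\textbf{Step 1 (classical relation).} For $z>0$ I would show
\begin{equation*}
\tilde{p}_{m,n}^{\nu}(z)=\tilde{p}_{m,n}(z+\nu)+\frac{\nu}{z}\,\tilde{p}_{0,n}(z+\nu).
\end{equation*}
This is the transform of the well-known renewal representation $p_{m,n}^{\nu}(t)=e^{-\nu t}p_{m,n}(t)+\nu\int_0^t e^{-\nu s}p_{0,n}(t-s)\,\mathrm{d}s$. It follows by conditioning on the time of the last catastrophe, which is exponential with rate $\nu$, after which the process restarts from $0$. Alternatively, it can be proved purely analytically from \eqref{ltDE1}. Using $\sum_{j}\tilde{p}_{m,j}^{\nu}(z)=1/z$, the system \eqref{ltDE1} is the catastrophe-free transformed system evaluated at the shifted argument $z+\nu$, with inhomogeneous term $\delta_{m,n}+\frac{\nu}{z}\delta_{0,n}$. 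By linearity, and by uniqueness of the solution of the BDP system (assumed, as is standard for non-explosive rates), its solution is $\tilde{p}_{m,n}(z+\nu)+\frac{\nu}{z}\tilde{p}_{0,n}(z+\nu)$. I regard this step as the only substantive one. I would present the probabilistic renewal argument and mention the uniqueness argument as a check.

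\textbf{Step 2 (substitution).} Insert Step 1 at the argument $z^{\alpha}$ into \eqref{pmnz}:
\begin{equation*}
\tilde{p}_{m,n}^{\alpha,\nu}(z)=z^{\alpha-1}\tilde{p}_{m,n}(z^{\alpha}+\nu)+\frac{\nu}{z}\,\tilde{p}_{0,n}(z^{\alpha}+\nu).
\end{equation*}
Here $z^{\alpha-1}\cdot\nu/z^{\alpha}=\nu/z$.

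\textbf{Step 3 (rewrite via the time-changed BDP).} Put $w=(\nu+z^{\alpha})^{1/\alpha}$, so that $w^{\alpha}=\nu+z^{\alpha}$. The $\nu=0$ case of \eqref{pmnz} gives
\begin{equation*}
\tilde{p}_{k,n}(\nu+z^{\alpha})=w^{1-\alpha}\tilde{p}_{k,n}^{\alpha,0}(w)=(\nu+z^{\alpha})^{\frac{1-\alpha}{\alpha}}\tilde{p}_{k,n}^{\alpha,0}\big((\nu+z^{\alpha})^{1/\alpha}\big)
\end{equation*}
for $k=m$ and $k=0$. Substituting into Step 2 and factoring out $(\nu+z^{\alpha})^{\frac{1-\alpha}{\alpha}}$ yields exactly \eqref{pmnzz}.

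All manipulations are valid for $z>0$, since then $w>0$. For $\alpha=1$ the formula reduces to Step 1, consistent with the convention $\mathcal{N}^{1,\nu}=\mathcal{N}^{\nu}$.
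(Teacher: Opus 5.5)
Your route is essentially the paper's: you insert the BDPC--BDP renewal relation (Pakes' Eq.~(2.2)) into the subordination identity \eqref{pmnz}, then rewrite $\tilde{p}_{k,n}(\nu+z^{\alpha})$ using the $\nu=0$ case of \eqref{pmnz}. The only cosmetic difference is the order of operations. You Laplace-transform the renewal relation first and evaluate at $z^{\alpha}$. The paper instead substitutes the time-domain relation into $z^{\alpha-1}\int_0^\infty p^{\nu}_{m,n}(x)e^{-xz^{\alpha}}\,\mathrm{d}x$ and swaps the order of integration. Your Steps 2 and 3 are computed correctly.

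The one real error is the time-domain formula you display in Step 1: it does not transform to the identity you state.
\begin{itemize}
\item The term $\nu\int_0^t e^{-\nu s}p_{0,n}(t-s)\,\mathrm{d}s$ is the convolution of $\nu e^{-\nu t}$ with $p_{0,n}$. Its Laplace transform is $\frac{\nu}{z+\nu}\tilde{p}_{0,n}(z)$, not $\frac{\nu}{z}\tilde{p}_{0,n}(z+\nu)$.
\item It is also wrong probabilistically. It treats the evolution after the first catastrophe as catastrophe-free; as a first-catastrophe decomposition it would need $p^{\nu}_{0,n}(t-s)$ in place of $p_{0,n}(t-s)$.
\item The time of the last catastrophe is not exponential. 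What has a tractable law is the age $s$ (time elapsed since the last catastrophe before $t$): on $(0,t)$ it has density $\nu e^{-\nu s}$, plus an atom of mass $e^{-\nu t}$ at $s=t$ (no catastrophe). After the reset, the process has evolved as a BDP from $0$ for a time $s$.
\end{itemize}
Conditioning on the age gives
\begin{equation*}
p^{\nu}_{m,n}(t)=e^{-\nu t}p_{m,n}(t)+\nu\int_0^t e^{-\nu s}p_{0,n}(s)\,\mathrm{d}s.
\end{equation*}
This is exactly the relation the paper uses, and its transform is precisely your Step 1.

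Your analytic alternative is correct as stated. You read \eqref{ltDE1} as the catastrophe-free resolvent system at $z+\nu$ with right-hand side $\delta_{m,n}+\frac{\nu}{z}\delta_{0,n}$, then use linearity plus the uniqueness assumption you flag. So the proof is sound once the displayed renewal formula is replaced by the one above.
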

\begin{proof}
On using Eq. (2.2) of Pakes (1997) in \eqref{pmnz}, we obtain
\begin{align}\label{pmnzzz}
	\tilde{p}_{m,n}^{\alpha,\nu}(z)
	&=z^{\alpha-1}\int_{0}^{\infty}e^{-(\nu +z^{\alpha})x}p_{m,n}(x)\mathrm{d}x+\nu z^{\alpha-1}\int_{0}^{\infty}\int_{0}^{x}e^{-(\nu y+xz^{\alpha})}{p}_{0,n}(y)\mathrm{d}y\mathrm{d}x\nonumber\\
	&=z^{\alpha-1}\tilde{p}_{m,n}(\nu+z^{\alpha})+\nu z^{\alpha-1}\int_{0}^{\infty}{p}_{0,n}(y)e^{-\nu y}\int_{y}^{\infty}e^{-xz^{\alpha}}\mathrm{d}x\mathrm{d}y\nonumber\\
	&=z^{\alpha-1}\tilde{p}_{m,n}(\nu+z^{\alpha})+\frac{\nu}{z}\int_{0}^{\infty}e^{-(\nu+z^{\alpha})y}{p}_{0,n}(y)\mathrm{d}y\nonumber\\
	&=z^{\alpha-1}\tilde{p}_{m,n}(\nu+z^{\alpha})+\frac{\nu}{z}\tilde{p}_{0,n}(\nu+z^{\alpha}).
\end{align}
Finally, on using $\tilde{p}^{\alpha,0}_{m,n}(z)=z^{\alpha-1}\tilde{p}_{m,n}(z^{\alpha})$ in \eqref{pmnzzz}, we get the required result.
\end{proof}

\begin{remark}
On substituting $\alpha=1$ in \eqref{pmnzz}, the Laplace transform for state probabilities of time-changed BDPC reduces to that of BDPC (see Pakes (1997), Eq. (3.11)).
\end{remark}

The proof of the following result follows similar lines to that of Theorem 4.1 of Ascione {\it et al.} (2020). 
\begin{theorem}\label{thm3.2}
The inter-occurrence times of catastrophes in time-changed BDPC are Mittag-Leffler distributed with parameters $\alpha\in(0,1]$ and $\nu>0$.
\end{theorem}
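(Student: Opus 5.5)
The plan is to reduce the statement to the classical fact for the untimed BDPC and then push it through the time change $Y_{\alpha}$.

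\textbf{Step 1 (the untimed process).} In the BDPC $\{\mathcal{N}^{\nu}(t)\}_{t\geq0}$, catastrophes occur at rate $\nu$ from every state, independently of the birth--death dynamics. The catastrophe epochs therefore form a homogeneous Poisson process $\{M(t)\}_{t\geq0}$ of rate $\nu$. One realises this by superposing an independent Poisson clock on the BDP, so that each ring resets the state to zero. Consequently the inter-occurrence times in the original time scale are i.i.d.\ exponential with parameter $\nu$.

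\textbf{Step 2 (counting in the new clock).} The number of catastrophes of the time-changed BDPC in $[0,t]$ is $M(Y_{\alpha}(t))$, with $Y_{\alpha}$ independent of $M$. This is exactly the fractional Poisson process of Beghin and Orsingher (2009) and Meerschaert \emph{et al.} (2011). In particular, the first occurrence time $T_1$ satisfies
\begin{equation*}
\mathrm{Pr}\{T_1>t\}=\mathrm{Pr}\{M(Y_{\alpha}(t))=0\}=\int_{0}^{\infty}e^{-\nu y}\,\mathrm{Pr}\{Y_{\alpha}(t)\in\mathrm{d}y\}.
\end{equation*}
Taking Laplace transforms and using \eqref{ltLnu12} gives $z^{\alpha-1}/(z^{\alpha}+\nu)$. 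By \eqref{ltm12} with $\gamma=\beta=1$, this inverts to $E_{\alpha}(-\nu t^{\alpha})$. Hence $\mathrm{Pr}\{T_1\leq t\}=1-E_{\alpha}(-\nu t^{\alpha})$, which is the Mittag-Leffler law with parameters $\alpha$ and $\nu$. For $\alpha=1$ this reduces to the exponential case.

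\textbf{Step 3 (subsequent inter-occurrence times).} The $k$th catastrophe epoch of the time-changed process is $T_k=D_{\alpha}(S_k-)$, or equivalently the first $t$ with $Y_{\alpha}(t)\geq S_k$, where $S_k=\sum_{i\le k}W_i$ is the $k$th Poisson epoch. Because $D_{\alpha}$ has no fixed jump times and is independent of $M$, we have $T_k=D_{\alpha}(S_k)$ almost surely. Then
\begin{equation*}
T_k-T_{k-1}=D_{\alpha}(S_{k-1}+W_k)-D_{\alpha}(S_{k-1}).
\end{equation*}
By independent and stationary increments of the L\'evy process $D_{\alpha}$ together with independence from the $W_i$, these increments are i.i.d.\ and distributed as $D_{\alpha}(W)$ with $W\sim\mathrm{Exp}(\nu)$. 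We identify the law directly via the Laplace transform:
\begin{equation*}
\mathbb{E}e^{-sD_{\alpha}(W)}=\int_0^\infty \nu e^{-\nu w}e^{-w s^{\alpha}}\,\mathrm{d}w=\frac{\nu}{\nu+s^{\alpha}}.
\end{equation*}
This is the Laplace transform of the Mittag-Leffler density, consistent with Step 2.

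\textbf{Main obstacle.} The delicate point is Step 3: justifying that the catastrophe epochs of the composed process are exactly $D_{\alpha}(S_k)$. This requires ruling out that a Poisson epoch falls at a jump time of $D_{\alpha}$, i.e.\ inside a flat stretch of $Y_{\alpha}$. It follows from independence and the fact that the jump times of $D_{\alpha}$ are countable, so a continuous $S_k$ avoids them almost surely. This is the same argument as in Theorem 4.1 of Ascione \emph{et al.} (2020), which we follow.
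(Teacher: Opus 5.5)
Your proof is correct, but for the key step it takes a different route from the paper.

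For the first catastrophe time, the paper introduces an auxiliary chain $X$ that leaves $k$ for an absorbing state $-1$ at rate $\nu$ and time-changes it. From $\tilde{x}_k^{\alpha}(z)=z^{\alpha-1}\tilde{x}_k(z^{\alpha})$ it derives $\frac{\mathrm{d}^{\alpha}}{\mathrm{d}t^{\alpha}}x_k^{\alpha}(t)=-\nu x_k^{\alpha}(t)$ and solves this to get $E_{\alpha}(-\nu t^{\alpha})$. Your Step 2 is the same subordination and Laplace-transform computation, without the auxiliary chain and the fractional equation.

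The genuine difference is in passing from the first occurrence time to all inter-occurrence times. The paper simply invokes the fact that $\mathcal{N}^{\alpha,\nu}$ is semi-Markov with a time-homogeneous embedded jump chain (Gikhman and Skorokhod). You instead use the pathwise identity $T_k=D_{\alpha}(S_k)$ a.s.\ together with the independent stationary increments of $D_{\alpha}$.

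Your route proves more, and more transparently:
\begin{itemize}
\item It shows the inter-occurrence times are i.i.d., so the catastrophe counting process is exactly a fractional Poisson process.
\item It automatically covers catastrophes that strike at state $0$. These produce no jump of $\mathcal{N}^{\alpha,\nu}$, so a jump-chain argument does not directly see them.
\item It transfers verbatim to the tempered case, where $\mathbb{E}e^{-sD_{\theta,\alpha}(W)}=\nu/(\nu+\phi_{\theta,\alpha}(s))$ is precisely the content of Lemma \ref{lemma5.1}.
\end{itemize}
The paper's route stays at the level of state probabilities and Kolmogorov-type equations. That is the template it reuses for sojourn times and for the tempered process, but the renewal step is essentially asserted rather than proved.

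Two small remarks. Your Step 3 with $k=1$ already contains Step 2. Also, the argument the paper says it borrows from Theorem 4.1 of Ascione et al.\ (2020) is the auxiliary-chain one, so your attribution of the $D_{\alpha}(S_k)$ argument to that theorem may be inaccurate.
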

\begin{proof}
In BDPC, it is known that the catastrophe occurs according to Poisson process, hence the first occurrence time of catastrophe is exponentially distributed with rate $\nu>0$ (see Pakes (1997) and Di Crescenzo {\it et al.} (2008)). To obtain the occurrence time of catastrophe in time-changed BDPC, let us define the modified process $\{X(t)\}_{t\geq0}$ such that it starts at $k\geq0$ and reaches its absorbing state $-1$ with rate $\nu$. This process is the modification of $\{\mathcal{ N}^{\nu}(t)\}_{t\geq0}$ such that birth and death rates are taken to be null. Note that catastrophe transitions BDPC $\{\mathcal{N}^{\nu}(t)\}_{t\geq0}$ from state $k$ to $0$ and it transition the process $\{X(t)\}_{t\geq0}$ from state $k$ to an absorbing state $-1$ with rate $\nu$. The state probabilities $x_{n}(t)=\mathrm{Pr}\{X(t)=n|X(0)=k\}$, $n\in\{-1,k\}$ solve the following system of differential equations:
\begin{align}\label{DE3}
	\left.
	\begin{aligned}
		\frac{\mathrm{d}}{\mathrm{d}t}x_{-1}(t)&=\nu x_{k}(t),\\
		\frac{\mathrm{d}}{\mathrm{d}t}x_{k}(t)&=-\nu x_{k}(t)
	\end{aligned}
	\right\}
\end{align}
with $x_{k}(0)=1$. So, $x_{k}(t)+x_{-1}(t)=1$. Now, let us denote $T$ as the occurrence time of the first catastrophe when $\mathcal{ N}^{\nu}(0)=k$. Then, its distribution function is given by 
\begin{align*}
	F_{T}(t)\coloneqq\mathrm{Pr}\{T\leq t\}&=1-\mathrm{Pr}\{T>t\}\nonumber\\
	&=1-\mathrm{Pr}\{X(t)=k|X(0)=k\}\nonumber\\
	&=1-x_{k}(t).
\end{align*}
On solving \eqref{DE3}, it can be shown that $x_{k}(t)=e^{-\nu t}$ and hence, $T$ has an exponential distribution with rate $\nu$.

To obtain the distribution of the occurrence time of catastrophe in $\{\mathcal{N}^{\alpha,\nu}(t)\}_{t\geq0}$,
we define a time-changed variant of $\{X(t)\}_{t\geq0}$ as follows: $X^{\alpha}(t)\coloneqq X(Y_{\alpha}(t))$, $0<\alpha<1$ and $X^{1}(t)=X(t)$. Here, $\{Y_{\alpha}(t)\}_{t\geq0}$ is the inverse stable subordinator such that $Y_{\alpha}(0)=0$, and it is independent of $\{X(t)\}_{t\geq0}$. Let us denote its state probabilities by $x^{\alpha}_{n}(t)=\mathrm{Pr}\{X^{\alpha}(t)=n|X^{\alpha}(0)=k\}$, $n\in\{-1,k\}$. Its Laplace transform is given by
\begin{align}\label{xmz}
	\tilde{x}_{n}^{\alpha}(z)
	&=\int_{0}^{\infty}e^{-zt}\mathrm{Pr}\{X(Y_{\alpha}(t))=n|X(0)=k\}\mathrm{d}t\nonumber\\
	&=\int_{0}^{\infty}\int_{0}^{\infty}e^{-zt}x_{n}(y)\mathrm{Pr}\{Y_{\alpha}(t)\in\mathrm{d}y\}\mathrm{d}t\nonumber\\
	&=z^{\alpha-1}\int_{0}^{\infty}x_{n}(y)e^{-yz^{\alpha}}\mathrm{d}y\nonumber\\
	&=z^{\alpha-1}\tilde{x}_{n}(z^{\alpha}),\, z>0,
\end{align}
where we have used \eqref{ltLnu12} in the penultimate step.

On taking the Laplace transform of \eqref{DE3}, and by using \eqref{caputolp} and \eqref{xmz}, we get
\begin{align*}
	\frac{\mathrm{d}^{\alpha}}{\mathrm{d}t^{\alpha}}x_{-1}^{\alpha}(t)&=\nu x_{k}^{\alpha}(t),\nonumber\\
	\frac{\mathrm{d}^{\alpha}}{\mathrm{d}t^{\alpha}}x_{k}^{\alpha}(t)&=-\nu x_{k}^{\alpha}(t)
\end{align*}
with initial condition $x_{k}^{\alpha}(0)=1$. Its solution is $x_{k}^{\alpha}(t)=E_{\alpha}(-\nu t^{\alpha})$ (see Kilbas {\it et al.} (2006)). Let $T_{\alpha}$ be the occurrence time of the first catastrophe in $\{\mathcal{ N}^{\alpha,\nu}(t)\}_{t\geq0}$ such that $\mathcal{ N}^{\alpha,\nu}(0)=k$. Then,
\begin{align*}
	F_{T_{\alpha}}(t)\coloneqq\mathrm{Pr}\{T_{\alpha}\leq t\}&=1-\mathrm{Pr}\{T_{\alpha}>t\}\nonumber\\
	&=1-\mathrm{Pr}\{X^{\alpha}(t)=k|X^{\alpha}(0)=k\}\nonumber\\
	&=1-E_{\alpha}(-\nu t^{\alpha}).
\end{align*}
As $\{\mathcal{ N}^{\nu}(t)\}_{t\geq0}$ is a Markov process, its time-changed variant $\{\mathcal{ N}^{\alpha,\nu}(t)\}_{t\geq0}$ is a semi-Markov process. Let $J_n$ be the $n$th jump time of $\{\mathcal{ N}^{\alpha,\nu}(t)\}_{t\geq0}$. Then, $\mathcal{ N}^{\alpha,\nu}(J_n)$ is a time-homogeneous Markov chain (see Gikhman and Skorokhod (1975)). Hence, the inter-occurrence times of catastrophes are Mittag-Leffler distributed with parameters $\alpha$ and $\nu$.
\end{proof}


\begin{remark}
For $\alpha=1$ in Theorem \ref{thm3.2}, the distribution of inter-occurrence times of catastrophes in time-changed BDPC reduces to that of BDPC (see Di Crescenzo {\it et al.} (2008)).
\end{remark}

The time spent by the process in any state before its transition to another state is called the sojourn time in that state.

\begin{theorem}\label{thmSojourn}
For any state $n>0$ of time-changed BDPC, the sojourn time in state $n$ is Mittag-Leffler distributed with parameters $\alpha$ and $\lambda_{n}+\mu_{n}+\nu$.
\end{theorem}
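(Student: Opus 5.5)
We mirror the argument of Theorem \ref{thm3.2}, replacing the single catastrophe transition by all possible exits from a fixed state $n>0$. In the BDPC $\{\mathcal{N}^{\nu}(t)\}_{t\geq0}$, a process in state $n\geq1$ leaves it by a birth (rate $\lambda_n$), a death (rate $\mu_n$) or a catastrophe (rate $\nu$). The catastrophe moves the process to $0\neq n$, so it is a genuine exit. The total exit rate is therefore $\lambda_n+\mu_n+\nu$.

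To make this precise, we introduce an auxiliary process $\{X(t)\}_{t\geq0}$ that starts at $n$ and jumps to an absorbing cemetery state $-1$ with rate $\lambda_n+\mu_n+\nu$. Up to the first jump it coincides with the BDPC: all transitions out of $n$ are merged into one absorbing transition. Its state probabilities satisfy
\begin{equation*}
\frac{\mathrm{d}}{\mathrm{d}t}x_{n}(t)=-(\lambda_n+\mu_n+\nu)x_n(t),\qquad \frac{\mathrm{d}}{\mathrm{d}t}x_{-1}(t)=(\lambda_n+\mu_n+\nu)x_n(t),
\end{equation*}
with $x_n(0)=1$. Hence $x_n(t)=e^{-(\lambda_n+\mu_n+\nu)t}$, which is the exponential sojourn time in state $n$ for the BDPC.

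Next we set $X^{\alpha}(t)=X(Y_\alpha(t))$, with $Y_\alpha$ independent of $X$. Exactly as in \eqref{xmz}, conditioning on $Y_\alpha(t)$ and using \eqref{ltLnu12} gives $\tilde{x}^{\alpha}_n(z)=z^{\alpha-1}\tilde{x}_n(z^\alpha)$. Combining this with the Laplace transform of the system above and \eqref{caputolp} yields
\begin{equation*}
\frac{\mathrm{d}^{\alpha}}{\mathrm{d}t^{\alpha}}x^{\alpha}_n(t)=-(\lambda_n+\mu_n+\nu)x^{\alpha}_n(t),\qquad x^{\alpha}_n(0)=1.
\end{equation*}
This can also be checked directly: $\tilde{x}^{\alpha}_n(z)=z^{\alpha-1}/(z^\alpha+\lambda_n+\mu_n+\nu)$, which is the transform \eqref{ltm12} of $E_\alpha(-(\lambda_n+\mu_n+\nu)t^\alpha)$. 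Therefore $x^{\alpha}_n(t)=E_\alpha(-(\lambda_n+\mu_n+\nu)t^\alpha)$.

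Now let $S_n$ denote the first sojourn time in $n$ of $\{\mathcal{N}^{\alpha,\nu}(t)\}_{t\geq0}$ started at $n$. Then $\mathrm{Pr}\{S_n>t\}=\mathrm{Pr}\{X^\alpha(t)=n\mid X^\alpha(0)=n\}=E_\alpha(-(\lambda_n+\mu_n+\nu)t^\alpha)$, which is the Mittag-Leffler law with parameters $\alpha$ and $\lambda_n+\mu_n+\nu$.

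The main obstacle is justifying that \emph{every} sojourn in $n$, not only the first one starting at time $0$, has this law. We handle it as at the end of Theorem \ref{thm3.2}. The process $\mathcal{N}^{\alpha,\nu}$ is semi-Markov, and the embedded chain $\mathcal{N}^{\alpha,\nu}(J_k)$ is time-homogeneous (Gikhman and Skorokhod (1975)). Moreover, the inverse stable subordinator renews at the jump epochs of the outer process. Hence each visit to $n$ begins afresh, and its holding time has the same distribution as $S_n$. A subtle point to check is that the time change does not split one sojourn into several or merge sojourns: $Y_\alpha$ is continuous and nondecreasing, so the sequence of visited states is preserved and only the holding times are stretched.
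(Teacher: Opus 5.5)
Your proposal is correct and follows the route the paper intends. The paper omits this proof, saying only that it mirrors Theorem \ref{thm3.2}, and your argument is exactly that adaptation: an absorbing auxiliary process with rate $\lambda_n+\mu_n+\nu$, the time change by $Y_\alpha$ giving survival function $E_\alpha(-(\lambda_n+\mu_n+\nu)t^\alpha)$, and the semi-Markov argument for later visits. Your renewal step is also sound, and it can be made fully rigorous by noting that the sojourn starting at the $k$th jump equals $D_\alpha(\tau_{k+1})-D_\alpha(\tau_k)$, where $\tau_k$ are the jump times of $\mathcal{N}^{\nu}$ and $\tau_0=0$; the independent stationary increments of $D_\alpha$ then give the same Mittag-Leffler law for every visit to $n$.
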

\begin{proof}
The proof follows similar lines to that of the Theorem \ref{thm3.2}. Thus, it is omitted.
\end{proof}

Let us define the first visit time of time-changed BDPC $\{\mathcal{N}^{\alpha,\nu}(t)\}_{t\geq0}$ to state zero as follows:
\begin{equation*}
	T^{\alpha,\nu}_{m,0}\coloneqq\inf\{t\geq0:\mathcal{N}^{\alpha,\nu}(t)=0\},
\end{equation*}
where $\mathcal{N}^{\alpha,\nu}(0)=m>0$. Also, let $T_{m,0}^{\alpha,0}$ be the first visit time of time-changed BDP $\{\mathcal{ N}^{\alpha,0}(t)\}_{t\geq0}$ to state zero.

\begin{theorem}\label{thm3}
Let $\mathcal{N}^{\alpha,\nu}(t)=\mathcal{ N}^{\nu}(Y_{\alpha}(t))$ be the time-changed BPPC such that zero is an absorbing state and $\mathcal{ N}^{\alpha,\nu}(0)=m>0$. Also, let $Z$ be the Mittag-Leffler random variable with parameters $\alpha$ and $\nu$ which is conditionally independent of $T_{m,0}^{\alpha,0}$ given $Y_{\alpha}(t)$ for all $t>0$. Then, 
\begin{equation}\label{TT}
	T_{m,0}^{\alpha,\nu}\overset{d}{=} \min\{T_{m,0}^{\alpha,0},Z\},
\end{equation}
where $\overset{d}{=}$ denotes equality in distribution.
\end{theorem}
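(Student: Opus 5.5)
The plan is to compare survival functions. I will first reduce the statement to the untime-changed BDPC, and then pass the time change through by conditioning on $Y_{\alpha}(t)$, which is exactly where the conditional independence hypothesis on $Z$ is used.

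\emph{Step 1 (no time change).} Let $T_{m,0}$ and $T^{\nu}_{m,0}$ denote the first visit times to zero of $\{\mathcal{N}(t)\}_{t\geq0}$ and $\{\mathcal{N}^{\nu}(t)\}_{t\geq0}$, respectively, both started at $m>0$. Zero is absorbing, and catastrophes arrive according to a Poisson process of rate $\nu$ independent of the birth--death mechanism, as recalled in the proof of Theorem~\ref{thm3.2}. Up to the first catastrophe time $E\sim\mathrm{Exp}(\nu)$, the BDPC therefore evolves as the BDP. Hence
\[
T^{\nu}_{m,0}=\min\{T_{m,0},E\}
\]
with $E$ independent of $T_{m,0}$, and so $\mathrm{Pr}\{T^{\nu}_{m,0}>y\}=\mathrm{Pr}\{T_{m,0}>y\}e^{-\nu y}$. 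If one prefers an analytic check, this also follows from \eqref{DE1} with $\lambda_{0}=0$ and $\mu_1$ redirected to absorption: the sum $\sum_{n\geq1}p^{\nu}_{m,n}(y)$ satisfies the BDP equations multiplied by the factor $e^{-\nu y}$.

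\emph{Step 2 (transfer through the time change).} Since $Y_{\alpha}$ is continuous, nondecreasing and starts at $0$, and zero is absorbing, we have for each $t$
\[
\{T^{\alpha,\nu}_{m,0}>t\}=\{T^{\nu}_{m,0}>Y_{\alpha}(t)\}, \qquad \{T^{\alpha,0}_{m,0}>t\}=\{T_{m,0}>Y_{\alpha}(t)\}.
\]
The path does not hit $0$ before operational time $Y_{\alpha}(t)$ iff it has not hit $0$ by real time $t$. Conditioning on $Y_{\alpha}(t)$ and using independence of $\mathcal{N}^{\nu}$ and $Y_{\alpha}$ then gives
\[
\mathrm{Pr}\{T^{\alpha,\nu}_{m,0}>t\}=\int_0^\infty \mathrm{Pr}\{T_{m,0}>y\}\,e^{-\nu y}\,\mathrm{Pr}\{Y_{\alpha}(t)\in\mathrm{d}y\}.
\]

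\emph{Step 3 (identify the right-hand side).} For the minimum, conditional independence of $T^{\alpha,0}_{m,0}$ and $Z$ given $Y_{\alpha}(t)$ yields
\[
\mathrm{Pr}\{\min\{T^{\alpha,0}_{m,0},Z\}>t\}=\int_0^\infty \mathrm{Pr}\{T^{\alpha,0}_{m,0}>t\mid Y_{\alpha}(t)=y\}\,\mathrm{Pr}\{Z>t\mid Y_{\alpha}(t)=y\}\,\mathrm{Pr}\{Y_{\alpha}(t)\in\mathrm{d}y\}.
\]
By Step 2 the first conditional factor equals $\mathrm{Pr}\{T_{m,0}>y\}$. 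For the second, I realize $Z$ as in Theorem~\ref{thm3.2}: it is the time the killed process $X^{\alpha}$ leaves state $k$, that is, $Z=\inf\{t:Y_{\alpha}(t)\geq E\}$ with $E\sim\mathrm{Exp}(\nu)$ independent of $Y_{\alpha}$. This gives $\mathrm{Pr}\{Z>t\mid Y_{\alpha}(t)=y\}=e^{-\nu y}$. The marginal law of $Z$ is then indeed Mittag-Leffler, because by \eqref{ltLnu12}
\[
\int_0^\infty e^{-\nu y}\,\mathrm{Pr}\{Y_{\alpha}(t)\in \mathrm{d}y\}
\]
has Laplace transform $z^{\alpha-1}/(z^{\alpha}+\nu)$, which by \eqref{ltm12} inverts to $E_{\alpha}(-\nu t^{\alpha})$. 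Comparing the two integrals proves \eqref{TT}.

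The main obstacle is making Step 3 rigorous: the hypothesis only asserts conditional independence, so the real content is the identification $\mathrm{Pr}\{Z>t\mid Y_{\alpha}(t)=y\}=e^{-\nu y}$. I would justify this by the explicit construction of $Z$ through the catastrophe clock $E$ and the same $Y_{\alpha}$. This is precisely the coupling in which the catastrophe time of the time-changed BDPC is $Z$ (Theorem~\ref{thm3.2}), and conditionally on $Y_{\alpha}$ the events $\{T_{m,0}>Y_{\alpha}(t)\}$ and $\{E>Y_{\alpha}(t)\}$ are independent, since $T_{m,0}$ and $E$ are.
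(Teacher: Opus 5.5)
Your proposal is correct and follows essentially the same route as the paper. Both arguments write the survival function of $T^{\alpha,\nu}_{m,0}$ as $\int_0^\infty e^{-\nu y}\,\mathrm{Pr}\{T_{m,0}>y\}\,\mathrm{Pr}\{Y_{\alpha}(t)\in\mathrm{d}y\}$ using $T^{\nu}_{m,0}\overset{d}{=}\min\{T_{m,0},E\}$, then factor the minimum by conditional independence given $Y_{\alpha}(t)$. In both, $\mathrm{Pr}\{Z>t\mid Y_{\alpha}(t)=y\}=e^{-\nu y}$ comes from realizing $Z$ through an exponential clock independent of the subordinator: your $\inf\{t:Y_{\alpha}(t)\geq E\}$ agrees a.s. with the paper's $D_{\alpha}(X)$, which the paper justifies via Lemma 7.1 of Ascione et al., whereas you check the Mittag-Leffler marginal directly by Laplace transform. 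You are right that this coupling, rather than the stated conditional-independence hypothesis alone, is what the argument actually uses; the paper relies on it just as implicitly.
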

\begin{proof}
Let $T_{m,0}^{\nu}$ and $T_{m,0}$ be the first visit time to state zero for BDPC $\{\mathcal{ N}^{\nu}(t)\}_{t\geq0}$ and BDP $\{\mathcal{ N}(t)\}_{t\geq0}$, respectively. Consider
\begin{align}\label{survTmoalphanu}
	\mathrm{Pr}\{T_{m,0}^{\alpha,\nu}>t\}
	&=\mathrm{Pr}\{\mathcal{ N}^{\alpha,\nu}(t)\neq0\,|\mathcal{ N}^{\alpha,\nu}(0)=m\}\nonumber\\
	&=1-\mathrm{Pr}\{\mathcal{ N}^{\nu}(Y_{\alpha}(t))=0\,|\mathcal{ N}^{\nu}(0)=m\}\nonumber\\
	&=1-\int_{0}^{\infty}\mathrm{Pr}\{\mathcal{ N}^{\nu}(y)=0|\mathcal{ N}^{\nu}(0)=m\}\mathrm{Pr}\{Y_{\alpha}(t)\in\mathrm{d}y\}\nonumber\\
	&=\int_{0}^{\infty}\mathrm{Pr}\{T_{m,0}^{\nu}>y\}\mathrm{Pr}\{Y_{\alpha}(t)\in\mathrm{d}y\}\nonumber\\
	&=\int_{0}^{\infty}e^{-\nu y}\mathrm{Pr}\{T_{m,0}>y\}\mathrm{Pr}\{Y_{\alpha}(t)\in\mathrm{d}y\},
\end{align}
where the last equality holds true because $T_{m,0}^{\nu}\overset{d}{=}\min\{T_{m,0},X\}$ such that $X$ is exponentially distributed with rate $\nu$ and independent of $T_{m,0}$ (see Di Crescenzo {\it et al.} (2008), p. 2250).

Let $\theta_{m,0}^{\alpha,\nu}=\min\{T_{m,0}^{\alpha,0},Z\}$.
Then,
\begin{align}\label{theta}
	\mathrm{Pr}\{\theta_{m,0}^{\alpha,\nu}>t\}
	&=\mathrm{Pr}\{T_{m,0}^{\alpha,0}>t,Z>t\}\nonumber\\
	&=\int_{0}^{\infty}\mathrm{Pr}\{T_{m,0}^{\alpha,0}>t,Z>t|Y_{\alpha}(t)\in\mathrm{d}y\}\mathrm{Pr}\{Y_{\alpha}(t)\in\mathrm{d}y\}\nonumber\\
	&=\int_{0}^{\infty}\mathrm{Pr}\{T_{m,0}^{\alpha,0}>t|Y_{\alpha}(t)\in\mathrm{d}y\}\mathrm{Pr}\{Z>t|Y_{\alpha}(t)\in\mathrm{d}y\}\mathrm{Pr}\{Y_{\alpha}(t)\in\mathrm{d}y\},
\end{align}
where the last step follows from the conditional independence of $T_{m,0}^{\alpha,0}$ and $Z$ given $Y_{\alpha}(t)$.

Now consider
\begin{align}\label{Tmoalpha}
	\mathrm{Pr}\{T_{m,0}^{\alpha,0}>t|Y_{\alpha}(t)\in\mathrm{d}y\}
	&=1-\mathrm{Pr}\{\mathcal{N}^{\alpha}(t)=0|\mathcal{ N}(0)=m,Y_{\alpha}(t)\in\mathrm{d}y\}\nonumber\\
	&=1-\mathrm{Pr}\{\mathcal{N}(y)=0|\mathcal{ N}(0)=m,Y_{\alpha}(t)\in\mathrm{d}y\}\nonumber\\
	&=1-\mathrm{Pr}\{\mathcal{N}(y)=0|\mathcal{ N}(0)=m\}\nonumber\\
	&=\mathrm{Pr}\{T_{m,0}>y\},
\end{align}
where the penultimate step follows from the independence of $\mathcal{N}(y)$ and $Y_{\alpha}(t)$. 

Let us assume that the exponential random variable $X$ be independent of the stable subordinator $D_{\alpha}(t)$, $0<\alpha<1$, $t\geq0$. On using Lemma 7.1 of Ascione {\it et al.} (2020), we get
\begin{align}\label{Zz}
	\mathrm{Pr}\{Z>t|Y_{\alpha}(t)\in\mathrm{d}y\}
	&=\mathrm{Pr}\{D_{\alpha}(X)>t|Y_{\alpha}(t)\in\mathrm{d}y\}\nonumber\\
	&=\mathrm{Pr}\{X>Y_{\alpha}(t)|Y_{\alpha}(t)\in\mathrm{d}y\}\nonumber\\
	&=\mathrm{Pr}\{X>y|Y_{\alpha}(t)\in\mathrm{d}y\}\nonumber\\
	&=\mathrm{Pr}\{X>y\}=e^{-\nu y}.
\end{align}
On substituting \eqref{Tmoalpha} and \eqref{Zz} in \eqref{theta}, we have
\begin{equation}\label{thetaf}
	\mathrm{Pr}\{\theta_{m,0}^{\alpha,\nu}>t\}=\int_{0}^{\infty}e^{-\nu y}\mathrm{Pr}\{T_{m,0}>y\}\mathrm{Pr}\{Y_{\alpha}(t)\in\mathrm{d}y\}.
\end{equation}
Finally, the required result follows from \eqref{survTmoalphanu} and \eqref{thetaf}.
\end{proof}

\begin{remark}
For $\alpha=1$ in \eqref{TT}, the distribution of the first visit time to state zero for time-changed BDPC reduces to that of BDPC (see Di Crescenzo {\it et al.} (2008), p. 2250).
\end{remark}

Let $f_{m,0}^{\alpha,\nu}(t)$, $f_{m,0}^{\alpha,0}(t)$ and $f_{m,0}(t)$ be the pdfs of $T_{m,0}^{\alpha,\nu}$, $T_{m,0}^{\alpha,0}$ and $T_{m,0}$, respectively. Also, let $\tilde{f}_{m,0}^{\alpha,\nu}(z)$, $\tilde{f}_{m,0}^{\alpha,0}(z)$ and $\tilde{f}_{m,0}(z)$ be their respective Laplace transforms.

\begin{corollary}\label{Corr1}
Let zero be an absorbing state of BDPC. Then,
\begin{equation}\label{fmoalphanuu}
	\tilde{f}_{m,0}^{\alpha,\nu}(z)=\frac{\nu}{\nu+z^{\alpha}}+\frac{z^{\alpha}}{\nu+z^{\alpha}}\tilde{f}_{m,0}(\nu+z^{\alpha}).
\end{equation}
\end{corollary}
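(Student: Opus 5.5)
We start from the survival function obtained in the proof of Theorem \ref{thm3}, namely \eqref{survTmoalphanu}:
\begin{equation*}
\mathrm{Pr}\{T_{m,0}^{\alpha,\nu}>t\}=\int_{0}^{\infty}e^{-\nu y}\,\mathrm{Pr}\{T_{m,0}>y\}\,\mathrm{Pr}\{Y_{\alpha}(t)\in\mathrm{d}y\}.
\end{equation*}
Writing $S(y)=e^{-\nu y}\mathrm{Pr}\{T_{m,0}>y\}$, we take the Laplace transform in $t$ and use \eqref{ltLnu12}, exactly as in \eqref{pmnz}. This gives
\begin{equation*}
\mathbb{L}\big(\mathrm{Pr}\{T_{m,0}^{\alpha,\nu}>t\}\big)(z)=z^{\alpha-1}\tilde{S}(z^{\alpha}).
\end{equation*}
Here $\tilde{S}(s)=\int_0^\infty e^{-(\nu+s)y}\mathrm{Pr}\{T_{m,0}>y\}\,\mathrm{d}y$ is the Laplace transform of the survival function of $T_{m,0}$ evaluated at $\nu+s$. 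We then use the standard relation $\mathbb{L}(\mathrm{Pr}\{T_{m,0}>y\})(w)=(1-\tilde{f}_{m,0}(w))/w$, which follows by integrating by parts, since $T_{m,0}>0$ a.s. because $m>0$. This yields
\begin{equation*}
\mathbb{L}\big(\mathrm{Pr}\{T_{m,0}^{\alpha,\nu}>t\}\big)(z)=z^{\alpha-1}\,\frac{1-\tilde{f}_{m,0}(\nu+z^{\alpha})}{\nu+z^{\alpha}}.
\end{equation*}

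Next we pass from the survival function to the density. Since $\Pr\{T_{m,0}^{\alpha,\nu}>0\}=1$, we have $\tilde{f}_{m,0}^{\alpha,\nu}(z)=1-z\,\mathbb{L}(\mathrm{Pr}\{T_{m,0}^{\alpha,\nu}>t\})(z)$. Substituting the previous display gives
\begin{equation*}
\tilde{f}_{m,0}^{\alpha,\nu}(z)=1-\frac{z^{\alpha}}{\nu+z^{\alpha}}\big(1-\tilde{f}_{m,0}(\nu+z^{\alpha})\big)=\frac{\nu}{\nu+z^{\alpha}}+\frac{z^{\alpha}}{\nu+z^{\alpha}}\tilde{f}_{m,0}(\nu+z^{\alpha}),
\end{equation*}
which is \eqref{fmoalphanuu}.

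The only delicate point is justifying the interchange of the $t$-Laplace transform with the $y$-integral. This is handled by Tonelli's theorem, since all integrands are nonnegative. We also need the survival function of $T_{m,0}$ to be a genuine (possibly defective) distribution. If absorption at zero is not certain, the relation $\mathbb{L}(\Pr\{T>y\})(w)=(1-\tilde f(w))/w$ still holds with $\tilde f$ the transform of the defective density, so the argument goes through unchanged. As a sanity check, setting $\alpha=1$ recovers the classical identity for the minimum of $T_{m,0}$ and an independent exponential time of rate $\nu$.
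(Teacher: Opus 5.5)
Your proof is correct and is essentially the paper's argument. Both start from \eqref{survTmoalphanu}, apply \eqref{ltLnu12}, and use the relation $\int_0^\infty e^{-wy}\mathrm{Pr}\{T>y\}\,\mathrm{d}y=(1-\tilde f(w))/w$ (the paper gets it through an explicit Fubini step). The only difference is the order of operations. The paper first differentiates under the integral in $t$, formally treating $\mathrm{Pr}\{Y_{\alpha}(0)\in\mathrm{d}y\}$ as a point mass, and then transforms. You transform the survival function first and use $\tilde f_{m,0}^{\alpha,\nu}(z)=1-z\,\mathbb{L}(\mathrm{Pr}\{T_{m,0}^{\alpha,\nu}>t\})(z)$, which avoids differentiating the measure and is slightly cleaner.
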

\begin{proof}
For $m>0$, we have
\begin{align}\label{fmoalphanu}
	f_{m,0}^{\alpha,\nu}(t)
	&=\frac{\mathrm{d}}{\mathrm{d}t}\mathrm{Pr}\{T_{m,0}^{\alpha,\nu}\leq t\}\nonumber\\
	&=-\int_{0}^{\infty}e^{-\nu y}\mathrm{Pr}\{T_{m,0}>y\}\frac{\mathrm{d}}{\mathrm{d}t}\mathrm{Pr}\{Y_{\alpha}(t)\in\mathrm{d}y\},
\end{align}
where we have used \eqref{survTmoalphanu}.
On taking the Laplace transform of \eqref{fmoalphanu} and using \eqref{ltLnu12}, we get
\begin{align*}
	\tilde{f}_{m,0}^{\alpha,\nu}(z)
	&=-\int_{0}^{\infty}e^{-\nu y}\mathrm{Pr}\{T_{m,0}>y\}(z^{\alpha}e^{-z^{\alpha}y}\mathrm{d}y-\mathrm{Pr}\{Y_{\alpha}(0)\in\mathrm{d}y\})\nonumber\\
	&=1-z^{\alpha}\int_{0}^{\infty}\int_{y}^{\infty}e^{-(\nu+z^{\alpha})y}f_{m,0}(x)\mathrm{d}x\mathrm{d}y\nonumber\\
	&=1-z^{\alpha}\int_{0}^{\infty}f_{m,0}(x)\int_{0}^{x}e^{-(\nu+z^{\alpha})y}\mathrm{d}y\mathrm{d}x\nonumber\\
	&=1+\frac{z^{\alpha}}{\nu+z^{\alpha}}(\tilde{f}_{m,0}(\nu+z^{\alpha})-1).
\end{align*}
Thus, we have the required result.
\end{proof}

\begin{remark}
For $\alpha=1$ in \eqref{fmoalphanuu}, the Laplace transform of pdf of the first visit time to state zero for time-changed BDPC reduces to that of BDPC (see Di Crescenzo {\it et al.} (2008), Eq. (6)).
\end{remark}

\begin{remark}
Let zero be an absorbing state of $\{\mathcal{ N}^{\alpha,\nu}(t)\}_{t\geq0}$. Then,
\begin{align*}
	\mathbb{E}(T_{m,0}^{\alpha,\nu})
	&=\int_{0}^{\infty}\mathrm{Pr}\{T_{m,0}^{\alpha,\nu}>t\}\mathrm{d}t\nonumber\\
	&=\int_{0}^{\infty}\int_{0}^{\infty}\mathrm{Pr}\{T_{m,0}^{\nu}>y\}\mathrm{Pr}\{Y_{\alpha}(t)\in\mathrm{d}y\}\mathrm{d}t\ \ (\text{from \eqref{survTmoalphanu}})\nonumber\\
	&=\int_{0}^{\infty}\mathrm{Pr}\{T_{m,0}^{\nu}>Y_{\alpha}(t)\}\mathrm{d}t\nonumber\\
	&=\int_{0}^{\infty}\mathrm{Pr}\{D_{\alpha}(T_{m,0}^{\nu})>t\}\mathrm{d}t =\mathbb{E}(D_{\alpha}(T_{m,0}^{\nu}))=\infty,
\end{align*}
as the stable subordinator $D_{\alpha}(t)$, $t\geq0$ has infinite expectation.
\end{remark}

\subsection{Effective catastrophe of time-changed BDPC}\label{SecEffctCatastrophe}
It is important to note that there is a possibility of catastrophe in the time-changed BDPC at state zero. Throughout this section, we consider that the catastrophe occurs at positive states only. 
A catastrophe that occurs at a non-zero state is called an effective catastrophe. That is, we exclude the occurrence of catastrophe at state zero. Thus, the first occurrence time of an effective catastrophe is not Mittag-Leffler distributed.

Let $K_{m,0}^{\alpha,\nu}$ be the first occurrence time of an effective catastrophe when $\mathcal{N}^{\alpha,\nu}(0)=m$, $m\in I$. An effective catastrophe transitions the process from any positive state $n$ to state zero with rate $\nu$. As $T_{m,0}^{\alpha,\nu}\geq K_{m,0}^{\alpha,\nu}$, the expectation and variance of the first occurrence time of an effective catastrophe in time-changed BDPC is infinite.

Let us recall the modified birth-death process with catastrophe denoted by $\{\mathcal{M}^{\nu}(t)\}_{t\geq0}$ with state space $I\cup\{-1\}$ as discussed in Di Crescenzo {\it et al.} (2008). It is similar to $\{\mathcal{N}^{\nu}(t)\}_{t\geq0}$ except that catastrophe transitions the process in any positive state $n$ to absorbing state $-1$. The connection between the $K_{m,0}^{\alpha,\nu}$ and $\{\mathcal{M}^{\nu}(t)\}_{t\geq0}$ is due its transition from any positive state $n$ to $-1$ is same as transition of $\{\mathcal{N}^{\nu}(t)\}_{t\geq0}$ from $n>0$ to state zero. The state probabilities $q_{m,n}^{\nu}(t)=\mathrm{Pr}\{\mathcal{M}^{\nu}(t)=n|\mathcal{M}^{\nu}(0)=m\}$, $m\in I$, $n\in$ $I\cup\{-1\}$ of $\{\mathcal{M}^{\nu}(t)\}_{t\geq0}$ solve the following system of differential equations:
{\small\begin{align}\label{DE5}
	\left.
	\begin{aligned}
		\frac{\mathrm{d}}{\mathrm{d}t}q_{m,-1}^{\nu}(t)&=\nu(1-q_{m,-1}^{\nu}(t)-q_{m,0}^{\nu}(t)),\\
		\frac{\mathrm{d}}{\mathrm{d}t}q_{m,0}^{\nu}(t)&=-\lambda_{0}q_{m,0}^{\nu}(t)+\mu_{1}q_{m,1}^{\nu}(t),\\
		\frac{\mathrm{d}}{\mathrm{d}t}q_{m,n}^{\nu}(t)&=-(\lambda_{n}+\mu_{n}+\nu)q_{m,n}^{\nu}(t)+\lambda_{n-1}q_{m,n-1}^{\nu}(t)+\mu_{n+1}q_{m,n+1}^{\nu}(t),\,n=1,2,\dots
	\end{aligned}
	\right\}
\end{align}}
with initial condition $q_{m,n}^{\nu}(0)=\delta_{m,n}$.
Consider a time-changed variant of it as follows: $\mathcal{M}^{\alpha,\nu}(t)=\mathcal{M}^{\nu}(Y_{\alpha}(t))$, where the inverse stable subordinator $\{Y_{\alpha}(t)\}_{t\geq0}$ is independent of $\{\mathcal{M}^{\nu}(t)\}_{t\geq0}$. 
Let $q_{m,n}^{\alpha,\nu}(t)=\mathrm{Pr}\{\mathcal{M}^{\alpha,\nu}(t)=n|\mathcal{M}^{\alpha,\nu}(0)=m\}$, $m\in I$, $n\in I\cup\{-1\}$ be its state probabilities.
The catastrophic transition for $\{\mathcal{M}^{\alpha,\nu}(t)\}_{t\geq0}$ from $n>0$ to $-1$ corresponds to the transition of $\{\mathcal{N}^{\alpha,\nu}(t)\}_{t\geq0}$ from $n>0$ to $0$. Let $g_{m,0}^{\alpha,\nu}(t)$ be the pdf of $K_{m,0}^{\alpha,\nu}$. Then, we have
\begin{equation}\label{Km0}
	\mathrm{Pr}\{K_{m,0}^{\alpha,\nu}>t\}=\int_{t}^{\infty}g_{m,0}^{\alpha,\nu}(x)\mathrm{d}x=\sum_{n=0}^{\infty}q_{m,n}^{\alpha,\nu}(t)=1-q_{m,-1}^{\alpha,\nu}(t),\,m\in I.
\end{equation}
On taking the Laplace transform on both sides of
\begin{equation*}
	q_{m,n}^{\alpha,\nu}(t)=\int_{0}^{\infty}q_{m,n}^{\nu}(y)\mathrm{Pr}\{Y_{\alpha}(t)\in\mathrm{d}y\}
\end{equation*}
and by using \eqref{ltLnu12}, we get
\begin{equation}\label{qmn}
	\tilde{q}_{m,n}^{\alpha,\nu}(z)=z^{\alpha-1}\tilde{q}_{m,n}^{\nu}(z^{\alpha}).
\end{equation}
On taking the Laplace transform of \eqref{DE5}, and by using \eqref{caputolp} and \eqref{qmn}, we obtain the following system of fractional differential equations:
{\small\begin{align}\label{DE6}
	\left.
	\begin{aligned}
		\frac{\mathrm{d}^{\alpha}}{\mathrm{d}t^{\alpha}}q_{m,-1}^{\alpha,\nu}(t)&=\nu(1-q_{m,-1}^{\alpha,\nu}(t)-q_{m,0}^{\alpha,\nu}(t)),\\
		\frac{\mathrm{d}^{\alpha}}{\mathrm{d}t^{\alpha}}q_{m,0}^{\alpha,\nu}(t)&=-\lambda_{0}q_{m,0}^{\alpha,\nu}(t)+\mu_{1}q_{m,1}^{\alpha,\nu}(t),\\
		\frac{\mathrm{d}^{\alpha}}{\mathrm{d}t^{\alpha}}q_{m,n}^{\alpha,\nu}(t)&=-(\lambda_{n}+\mu_{n}+\nu)q_{m,n}^{\alpha,\nu}(t)+\lambda_{n-1}q_{m,n-1}^{\alpha,\nu}(t)+\mu_{n+1}q_{m,n+1}^{\alpha,\nu}(t),\,n=1,2,\dots
	\end{aligned}
	\right\}
\end{align}}
with initial condition $q_{m,n}^{\alpha,\nu}(0)=\delta_{m,n}$.

\begin{theorem}\label{thm2}
For $m\in I$ and $z>0$, we have
\begin{equation}\label{qm1z}
	\tilde{q}^{\alpha,\nu}_{m,-1}(z)=\frac{\nu z^{\alpha-1}}{(\nu+z^{\alpha})}\Bigg(\frac{1}{z^{\alpha}}-\frac{(\nu+z^{\alpha})^{\frac{(1-\alpha)}{\alpha}}\tilde{p}_{m,0}^{\alpha,0}((\nu+z^{\alpha})^{1/\alpha})}{1-\nu(\nu+z^{\alpha})^{\frac{(1-\alpha)}{\alpha}}\tilde{p}_{0,0}^{\alpha,0}((\nu+z^{\alpha})^{1/\alpha})}\Bigg)
\end{equation}
and
\begin{align*}
	\tilde{q}^{\alpha,\nu}_{m,n}(z)&=z^{\alpha-1}(\nu+z^{\alpha})^{\frac{(1-\alpha)}{\alpha}}\Bigg(\tilde{p}_{m,n}^{\alpha,0}((\nu+z^{\alpha})^{1/\alpha})\nonumber\\
	&\hspace{4cm}
	+\frac{\nu(\nu+z^{\alpha})^{\frac{(1-\alpha)}{\alpha}}\tilde{p}_{m,0}^{\alpha,0}((\nu+z^{\alpha})^{1/\alpha})\tilde{p}_{0,n}^{\alpha,0}((\nu+z^{\alpha})^{1/\alpha})}{1-\nu(\nu+z^{\alpha})^{\frac{(1-\alpha)}{\alpha}}\tilde{p}_{0,0}^{\alpha,0}((\nu+z^{\alpha})^{1/\alpha})} \Bigg),\ \ n\in I.
\end{align*}
\end{theorem}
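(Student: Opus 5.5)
The plan is to work first with the untime-changed modified process $\{\mathcal{M}^{\nu}(t)\}_{t\geq0}$ and then transfer the result through \eqref{qmn}. Write $s$ for the Laplace variable of $\mathcal{M}^{\nu}$ and let $p_{m,n}(t)$ be the state probabilities of the BDP. The key observation concerns $\mathcal{M}^{\nu}$ restricted to $I$. It behaves like a BDP that is killed at rate $\nu$ in every state, except that in state $0$ a $\nu$-event does not kill the process; by memorylessness such an event simply restarts it from $0$. Decomposing according to the last such "fictitious" $\nu$-event at state $0$ should give the renewal-type identity
\begin{equation*}
q_{m,n}^{\nu}(t)=e^{-\nu t}p_{m,n}(t)+\nu\int_{0}^{t}q_{m,0}^{\nu}(u)e^{-\nu(t-u)}p_{0,n}(t-u)\,\mathrm{d}u,\quad n\in I.
\end{equation*}

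Taking Laplace transforms of this identity gives
\begin{equation*}
\tilde q_{m,n}^{\nu}(s)=\tilde p_{m,n}(s+\nu)+\nu\,\tilde q_{m,0}^{\nu}(s)\,\tilde p_{0,n}(s+\nu).
\end{equation*}
I will then proceed in three steps.
\begin{enumerate}
\item Put $n=0$ and solve for $\tilde q_{m,0}^{\nu}(s)=\tilde p_{m,0}(s+\nu)/(1-\nu\tilde p_{0,0}(s+\nu))$.
\item Substitute this back to obtain $\tilde q_{m,n}^{\nu}(s)$ for every $n\in I$.
\item For the state $-1$, take the Laplace transform of the first equation of \eqref{DE5}, using $q_{m,-1}^{\nu}(0)=0$ for $m\in I$. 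This gives $(s+\nu)\tilde q_{m,-1}^{\nu}(s)=\nu(1/s-\tilde q_{m,0}^{\nu}(s))$, so
\begin{equation*}
\tilde q_{m,-1}^{\nu}(s)=\frac{\nu}{s+\nu}\Big(\frac1s-\tilde q_{m,0}^{\nu}(s)\Big).
\end{equation*}
\end{enumerate}

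Next I pass to the time-changed process. By \eqref{qmn}, $\tilde q_{m,n}^{\alpha,\nu}(z)=z^{\alpha-1}\tilde q_{m,n}^{\nu}(z^{\alpha})$, so I set $s=z^{\alpha}$ and multiply by $z^{\alpha-1}$. The BDP transforms are then rewritten as transforms of the time-changed BDP. As in the proof of Proposition \ref{prop1}, $\tilde p_{m,n}^{\alpha,0}(w)=w^{\alpha-1}\tilde p_{m,n}(w^{\alpha})$. With $w=(\nu+z^{\alpha})^{1/\alpha}$ this becomes
\begin{equation*}
\tilde p_{m,n}(\nu+z^{\alpha})=(\nu+z^{\alpha})^{\frac{1-\alpha}{\alpha}}\tilde p_{m,n}^{\alpha,0}((\nu+z^{\alpha})^{1/\alpha}).
\end{equation*}
Inserting this into the expressions for $\tilde q_{m,-1}^{\nu}$ and $\tilde q_{m,n}^{\nu}$ yields exactly \eqref{qm1z} and the stated formula for $\tilde q^{\alpha,\nu}_{m,n}$.

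I expect the main obstacle to be justifying the renewal identity rigorously. If the probabilistic argument is awkward to make precise, my fallback is purely analytic. I would take the Laplace transform of the last two equations of \eqref{DE5} and compare them with the transformed BDP system at argument $s+\nu$. These are the equations \eqref{ltDE1} with $\nu=0$ and $z$ replaced by $s+\nu$. The $\{q_{m,n}^{\nu}\}_{n\in I}$ system differs from that shifted system only by the extra term $\nu\tilde q_{m,0}^{\nu}(s)$ in the $n=0$ equation. Treating this term as an initial mass at $0$, linearity gives the candidate solution $\tilde p_{m,n}(s+\nu)+\nu\tilde q_{m,0}^{\nu}(s)\tilde p_{0,n}(s+\nu)$. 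The remaining point is uniqueness of the bounded solution of the transformed system, which I would take from minimality of the BDP transition function, i.e.\ assuming the BDP is non-explosive.
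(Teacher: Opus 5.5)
Your proposal is correct and follows the paper's proof. The paper takes the untime-changed transforms $\tilde q^{\nu}_{m,-1}$ and $\tilde q^{\nu}_{m,n}$ directly from Eqs.~(11)--(12) of Di Crescenzo et al.\ (2008), evaluates them at $z^{\alpha}$ via $\tilde q^{\alpha,\nu}_{m,n}(z)=z^{\alpha-1}\tilde q^{\nu}_{m,n}(z^{\alpha})$, and rewrites them using $\tilde p^{\alpha,0}_{m,n}(w)=w^{\alpha-1}\tilde p_{m,n}(w^{\alpha})$, exactly as you do; the only difference is that you derive those cited formulas yourself (via the last-visit-to-$0$ renewal identity, or via linearity of the resolvent system plus uniqueness under non-explosion), which makes your argument self-contained but is not required given the reference.
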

\begin{proof}
On using Eq. (11) and Eq. (12) of Di Crescenzo {\it et al.} (2008) in \eqref{qmn}, we obtain
\begin{equation}\label{qm1zz}
\tilde{q}_{m,-1}^{\alpha,\nu}(z)=\frac{\nu z^{\alpha-1}}{(\nu+z^{\alpha})}\Big(\frac{1}{z^{\alpha}}-\frac{\tilde{p}_{m,0}(\nu+z^{\alpha})}{1-\nu\tilde{p}_{0,0}(\nu+z^{\alpha})}\Big)
\end{equation}
and
\begin{equation}\label{qmnzz}
\tilde{q}_{m,n}^{\alpha,\nu}(z)=z^{\alpha-1}\Bigg(\tilde{p}_{m,n}(\nu+z^{\alpha})+\frac{\nu \tilde{p}_{0,n}(\nu+z^{\alpha})\tilde{p}_{m,0}(\nu+z^{\alpha})}{1-\nu\tilde{p}_{0,0}(\nu+z^{\alpha})}\Bigg),
\end{equation}
respectively.
By using  $\tilde{p}^{\alpha,0}_{m,n}(z)=z^{\alpha-1}\tilde{p}_{m,n}(z^{\alpha})$ in \eqref{qm1zz} and \eqref{qmnzz}, we get the required result.
\end{proof}

\begin{remark}
On substituting $\alpha=1$ in Theorem \ref{thm2}, the Laplace transform of the state probabilities of $\{\mathcal{M}^{\alpha,\nu}(t)\}_{t\geq0}$ reduces to that of $\{\mathcal{M}^{\nu}(t)\}_{t\geq0}$ (see Di Crescenzo {\it et al.} (2008), Eq. (11) and Eq. (12)).
\end{remark}

\begin{theorem}\label{thm3.6}
The Laplace transform of the pdf of first occurrence time of an effective catastrophe in time-changed BPDC is given by
{\small\begin{equation*}
	\tilde{g}_{m,0}^{\alpha,\nu}(z)=\frac{\nu z^{\alpha}}{\nu+z^{\alpha}}\Big(\frac{1}{z^{\alpha}}-\frac{(\nu+z^{\alpha})^{\frac{(1-\alpha)}{\alpha}}\tilde{f}_{m,0}({\nu+z^{\alpha}})\tilde{p}_{0,0}^{\alpha,0}((\nu+z^{\alpha})^{1/\alpha})}{1-\nu(\nu+z^{\alpha})^{\frac{(1-\alpha)}{\alpha}} \tilde{p}_{0,0}^{\alpha,0}((\nu+z^{\alpha})^{1/\alpha})}\Big),\,z>0,
\end{equation*}}
where $\mathcal{ N}^{\alpha,\nu}(0)=m$.
\end{theorem}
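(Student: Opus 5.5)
The plan is to read off $\tilde{g}_{m,0}^{\alpha,\nu}$ from Theorem \ref{thm2}, and then to rewrite $\tilde{p}_{m,0}^{\alpha,0}$ through a first-passage decomposition of the BDP at state zero.

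\textbf{Step 1 (Laplace transform of the density).} By \eqref{Km0}, $g_{m,0}^{\alpha,\nu}(t)=\frac{\mathrm{d}}{\mathrm{d}t}q_{m,-1}^{\alpha,\nu}(t)$. Since $m\in I$, the initial condition gives $q_{m,-1}^{\alpha,\nu}(0)=\delta_{m,-1}=0$. Hence
\begin{equation*}
\tilde{g}_{m,0}^{\alpha,\nu}(z)=z\,\tilde{q}_{m,-1}^{\alpha,\nu}(z),\quad z>0.
\end{equation*}
Multiplying \eqref{qm1z} by $z$ turns the prefactor $\nu z^{\alpha-1}/(\nu+z^{\alpha})$ into $\nu z^{\alpha}/(\nu+z^{\alpha})$. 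This already gives the structure of the claimed formula, except that the numerator still contains $\tilde{p}_{m,0}^{\alpha,0}((\nu+z^{\alpha})^{1/\alpha})$.

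\textbf{Step 2 (first-passage decomposition).} For the BDP $\{\mathcal{N}(t)\}_{t\geq0}$ with $m>0$, we decompose at the first visit time $T_{m,0}$ to state zero and use the strong Markov property:
\begin{equation*}
p_{m,0}(t)=\int_{0}^{t}f_{m,0}(x)\,p_{0,0}(t-x)\,\mathrm{d}x .
\end{equation*}
The path cannot be at $0$ before $T_{m,0}$, and after $T_{m,0}$ it restarts from $0$. By the convolution theorem, $\tilde{p}_{m,0}(s)=\tilde{f}_{m,0}(s)\,\tilde{p}_{0,0}(s)$. For $m=0$ we interpret $f_{0,0}$ as the Dirac mass at $0$, so that $\tilde{f}_{0,0}\equiv1$ and the identity holds trivially.

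\textbf{Step 3 (transfer to the time-changed BDP).} Put $w=(\nu+z^{\alpha})^{1/\alpha}$ and use $\tilde{p}^{\alpha,0}_{m,n}(w)=w^{\alpha-1}\tilde{p}_{m,n}(w^{\alpha})$ (from the proof of Proposition \ref{prop1}) together with Step 2. This gives
\begin{equation*}
(\nu+z^{\alpha})^{\frac{1-\alpha}{\alpha}}\tilde{p}_{m,0}^{\alpha,0}(w)=\tilde{p}_{m,0}(\nu+z^{\alpha})=\tilde{f}_{m,0}(\nu+z^{\alpha})\,\tilde{p}_{0,0}(\nu+z^{\alpha})=(\nu+z^{\alpha})^{\frac{1-\alpha}{\alpha}}\tilde{f}_{m,0}(\nu+z^{\alpha})\,\tilde{p}_{0,0}^{\alpha,0}(w).
\end{equation*}
Substituting this into the expression from Step 1 yields exactly the stated formula for $\tilde{g}_{m,0}^{\alpha,\nu}(z)$.

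\textbf{Main obstacle.} The only non-routine point is the renewal identity in Step 2, specifically the strong Markov argument at $T_{m,0}$. It must be applied to the untime-changed BDP, whose first-visit density $f_{m,0}$ is the one appearing in the statement, and not to the time-changed process. Everything else is algebra on results already established in Theorem \ref{thm2} and Proposition \ref{prop1}.
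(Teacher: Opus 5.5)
Your proposal is correct and follows essentially the same route as the paper: differentiate \eqref{Km0}, multiply \eqref{qm1z} by $z$ (using $q_{m,-1}^{\alpha,\nu}(0)=0$), and then replace $\tilde{p}_{m,0}^{\alpha,0}$ via the renewal identity $\tilde{p}_{m,0}=\tilde{f}_{m,0}\tilde{p}_{0,0}$, carried over to the time-changed process through $\tilde{p}^{\alpha,0}_{m,n}(w)=w^{\alpha-1}\tilde{p}_{m,n}(w^{\alpha})$. The differences are minor. You justify the renewal identity by the strong Markov property, taking $\tilde{f}_{0,0}\equiv1$ for $m=0$, whereas the paper simply cites Eq.~(24) of Di Crescenzo \emph{et al.} (2008). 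Your evaluation at $w=(\nu+z^{\alpha})^{1/\alpha}$ is also the correct form of \eqref{bfpm0alphaa}, whose last term should read $\tilde{p}_{0,0}^{\alpha,0}(z)$ rather than $\tilde{p}_{0,0}^{\alpha,0}(z^{\alpha})$.
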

\begin{proof}
From \eqref{Km0}, we have
\begin{equation}\label{gm0}
	g_{m,0}^{\alpha,\nu}(t)=\frac{\mathrm{d}}{\mathrm{d}t}q_{m,-1}^{\alpha,\nu}(t).
\end{equation}
On taking the Laplace transform of \eqref{gm0}, we get
\begin{equation}\label{gm0z}
	\tilde{g}_{m,0}^{\alpha,\nu}(z)=z\tilde{q}_{m,-1}^{\alpha,\nu}(z)-q_{m,-1}^{\alpha,\nu}(0).
\end{equation} 
By using \eqref{qm1z} in \eqref{gm0z}, we obtain
\begin{equation}\label{gm0alpha}
	\tilde{g}_{m,0}^{\alpha,\nu}(z)=\frac{\nu z^{\alpha}}{\nu+z^{\alpha}}\Bigg(\frac{1}{z^{\alpha}}-\frac{(\nu+z^{\alpha})^{\frac{(1-\alpha)}{\alpha}}\tilde{p}_{m,0}^{\alpha,0}((\nu+z^{\alpha})^{1/\alpha})}{1-\nu(\nu+z^{\alpha})^{\frac{(1-\alpha)}{\alpha}}\tilde{p}_{0,0}^{\alpha,0}((\nu+z^{\alpha})^{1/\alpha})}\Bigg).
\end{equation}
Note that
\begin{equation}\label{bfpm0alphaa}
	\tilde{p}_{m,0}^{\alpha,0}(z)=z^{\alpha-1}\tilde{p}_{m,0}(z^{\alpha})
	=z^{\alpha-1}\tilde{f}_{m,0}(z^{\alpha})\tilde{p}_{0,0}(z^{\alpha})
	=\tilde{f}_{m,0}(z^{\alpha})\tilde{p}_{0,0}^{\alpha,0}(z^{\alpha}),
\end{equation}
where we have used Eq. (24) from Di Crescenzo {\it et al.} (2008) in the penultimate step.
On substituting \eqref{bfpm0alphaa} in \eqref{gm0alpha}, we get the required result.
\end{proof}

The following result follows on using Corollary \ref{Corr1} in Theorem \ref{thm3.6}:
\begin{corollary}
Let zero be an absorbing state in time-changed BDPC $\{\mathcal{ N}^{\alpha,\nu}(t)\}_{t\geq0}$. Then,
\begin{equation*}
	\tilde{g}_{m,0}^{\alpha,\nu}(z)=\tilde{f}_{m,0}^{\alpha,\nu}(z)-\frac{z^{\alpha}\tilde{f}_{m,0}(\nu+z^{\alpha})}{(\nu+z^{\alpha})\big(1-\nu(\nu+z^{\alpha})^{\frac{1-\alpha}{\alpha}}\tilde{p}_{0,0}^{\alpha,0}((\nu+z^{\alpha})^{1/\alpha})\big)}.
\end{equation*}
\end{corollary}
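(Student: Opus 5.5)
The plan is to substitute Corollary \ref{Corr1} directly into the expression of Theorem \ref{thm3.6} and reorganise. Throughout, write $w=\nu+z^{\alpha}$ and
\[
A=w^{\frac{1-\alpha}{\alpha}}\tilde{p}_{0,0}^{\alpha,0}(w^{1/\alpha}).
\]
With this notation, Theorem \ref{thm3.6} gives
\[
\tilde{g}_{m,0}^{\alpha,\nu}(z)=\frac{\nu}{w}-\frac{\nu z^{\alpha}}{w}\cdot\frac{\tilde{f}_{m,0}(w)\,A}{1-\nu A}.
\]
This form is valid for any $m\in I$, and in particular when zero is absorbing for the positive states.

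Next we isolate $\nu/w$ using Corollary \ref{Corr1}, which gives $\frac{\nu}{w}=\tilde{f}_{m,0}^{\alpha,\nu}(z)-\frac{z^{\alpha}}{w}\tilde{f}_{m,0}(w)$. Substituting this yields
\[
\tilde{g}_{m,0}^{\alpha,\nu}(z)=\tilde{f}_{m,0}^{\alpha,\nu}(z)-\frac{z^{\alpha}\tilde{f}_{m,0}(w)}{w}\Big(1+\frac{\nu A}{1-\nu A}\Big).
\]
Since $1+\frac{\nu A}{1-\nu A}=\frac{1}{1-\nu A}$, the bracket collapses and we obtain exactly the claimed expression
\[
\tilde{g}_{m,0}^{\alpha,\nu}(z)=\tilde{f}_{m,0}^{\alpha,\nu}(z)-\frac{z^{\alpha}\tilde{f}_{m,0}(w)}{w\,(1-\nu A)}.
\]

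The main point needing care is consistency of hypotheses. Corollary \ref{Corr1} is stated for zero absorbing, whereas in Theorem \ref{thm3.6} the quantities $\tilde{f}_{m,0}$ and $\tilde{p}_{0,0}^{\alpha,0}$ refer to the underlying BDP. I would check that $\tilde{f}_{m,0}$, the Laplace transform of the first-passage density to zero for the BDP, is the same object in both results. This holds because the first visit to zero does not depend on behaviour after hitting zero. The factor $\tilde{p}_{0,0}^{\alpha,0}$ is then simply carried through from Theorem \ref{thm3.6} as is.

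Finally, I would note that $1-\nu A\neq 0$ for $z>0$. This follows because $\nu\tilde{p}_{0,0}(w)<\nu/w<1$, which keeps every division in the argument legitimate.
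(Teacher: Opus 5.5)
Your proposal is correct and follows the paper's route exactly: the paper obtains this corollary by substituting Corollary~\ref{Corr1} into Theorem~\ref{thm3.6}, and your algebra with $w=\nu+z^{\alpha}$ and $A$ reproduces the stated formula. One small fix is needed in your non-vanishing remark. Note that $A=\tilde{p}_{0,0}(w)$, and under the absorbing hypothesis $\tilde{p}_{0,0}(w)=1/w$ exactly, so the inequality should read $\nu A\le \nu/w<1$ rather than being strict at the first step (indeed $1-\nu A=z^{\alpha}/w$); this still gives the conclusion $1-\nu A\neq 0$ that you need.
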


\begin{remark}
On substituting $\alpha=1$ in Theorem \ref{thm3.6}, the Laplace transform of the pdf of first occurrence time of an effective catastrophe in $\{\mathcal{N}^{\alpha,\nu}(t)\}_{t\geq0}$ reduces to that of $\{\mathcal{N}^{\nu}(t)\}_{t\geq0}$ (see Di Crescenzo {\it et al.} (2008), Proposition 3.1).
\end{remark}


\section{Time-Changed Linear Birth-Death Process with Catastrophe}\label{sec4}
In this section, we study a particular case of the time-changed BDPC which is obtained by taking the birth rates as $\lambda_{n}=n\lambda$ and death rates as $\mu_{n}=n\mu$ for $n\in\{0,1,2,\dots\}$. We call it the time-changed linear birth-death process with catastrophe, in short the time-changed LBDPC. It is important to note that zero becomes the absorbing state as $\lambda_0=0$. 

Let us denote the time-changed LBDPC by $\{N^{\alpha,\nu}(t)\}_{t\geq0}$ and its state probabilities by $h^{\alpha,\nu}_{m,n}(t)=\mathrm{Pr}\{N^{\alpha,\nu}(t)=n|N^{\alpha,\nu}(0)=m\}$. Thus, from Theorem \ref{thm1}, we have the following system of fractional differential equations:
{\footnotesize\begin{align}\label{DE7}
	\left.
	\begin{aligned}
		\frac{\mathrm{d}^{\alpha}}{\mathrm{d}t^{\alpha}}h^{\alpha,\nu}_{m,0}(t)
		&=-\nu h^{\alpha,\nu}_{m,0}(t)+\mu h^{\alpha,\nu}_{m,1}(t)+\nu,\\
		\frac{\mathrm{d}^{\alpha}}{\mathrm{d}t^{\alpha}}h^{\alpha,\nu}_{m,n}(t)
		&=-(n\lambda+n\mu+\nu)h^{\alpha,\nu}_{m,n}(t)+(n-1)\lambda h^{\alpha,\nu}_{m,n-1}(t) +(n+1)\mu h^{\alpha,\nu}_{m,n+1}(t),\,n=1,2,\dots
	\end{aligned}
	\right\}
\end{align}}
with initial condition $h^{\alpha,\nu}_{m,n}(0)=\delta_{m,n}$.

Let $\{{N}^{\alpha,0}(t)\}_{t\geq0}$ be the corresponding time-changed linear birth-death process without catastrophe, that is, the time-changed LBDP. Its state probabilities are denoted by ${h}^{\alpha,0}_{m,n}(t)=\mathrm{Pr}\{{N}^{\alpha,0}(t)=n|{N}^{\alpha,0}(0)=m\}$, $n\in I$.

For the subsequent results, we recall that $E_{\alpha}(\cdot)$, $E_{\alpha,\beta}(\cdot)$ and $E_{\alpha,\beta}^{\gamma}(\cdot)$ are the Mittag-Leffler functions as defined in Section \ref{SubSecMittagLeffler}. First, we obtain the extinction probabilities of time-changed LBDPC for three distinct cases.

\begin{theorem}\label{thmzeroprobforeqrates}
	For $\lambda=\mu$, the extinction probability of time-changed LBDPC $\{N^{\alpha,\nu}(t)\}_{t\geq0}$ is given by 
	\begin{equation*}
		h^{\alpha,\nu}_{1,0}(t)=E_{\alpha}(-\nu t^{\alpha})-\int_{0}^{\infty}e^{-x}E_{\alpha}(-(\nu+\lambda x)t^{\alpha})\mathrm{d}x+\nu t^{\alpha}E_{\alpha,\alpha+1}(-\nu t^\alpha).
	\end{equation*}
\end{theorem}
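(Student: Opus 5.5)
The plan is to work entirely in the Laplace domain. I will invoke Proposition \ref{prop1} in its intermediate form \eqref{pmnzzz}, taking $m=1$, $n=0$ and the linear rates $\lambda_n=n\lambda$, $\mu_n=n\mu$ with $\lambda=\mu$:
\begin{equation*}
\tilde{h}^{\alpha,\nu}_{1,0}(z)=z^{\alpha-1}\tilde{p}_{1,0}(\nu+z^{\alpha})+\frac{\nu}{z}\tilde{p}_{0,0}(\nu+z^{\alpha}),
\end{equation*}
where $p_{m,n}(t)$ now denotes the state probabilities of the classical linear birth-death process without catastrophe. This form is preferable to the one stated in \eqref{pmnzz} because it only needs the non-fractional BDP transforms. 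I then insert two known explicit facts. Since $\lambda_0=0$, zero is absorbing, so $p_{0,0}(t)=1$. For $\lambda=\mu$, the standard extinction probability of the linear birth-death process started from one individual is $p_{1,0}(t)=\lambda t/(1+\lambda t)$.

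The second term is immediate. We have $\tilde{p}_{0,0}(s)=1/s$, so the term equals $\nu z^{-1}/(z^{\alpha}+\nu)$. By \eqref{ltm12} with $\gamma=1$ and $\beta=\alpha+1$, the function $t^{\alpha}E_{\alpha,\alpha+1}(-\nu t^{\alpha})$ has Laplace transform $z^{\alpha-(\alpha+1)}/(z^{\alpha}+\nu)=z^{-1}/(z^{\alpha}+\nu)$. Hence this term inverts to $\nu t^{\alpha}E_{\alpha,\alpha+1}(-\nu t^{\alpha})$, which matches the last summand of the claimed formula.

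For the first term, the key step is to write
\begin{equation*}
p_{1,0}(t)=1-\frac{1}{1+\lambda t}=1-\int_{0}^{\infty}e^{-x}e^{-\lambda x t}\,\mathrm{d}x .
\end{equation*}
By Tonelli's theorem (everything is nonnegative) this gives $\tilde{p}_{1,0}(s)=1/s-\int_0^\infty e^{-x}(s+\lambda x)^{-1}\,\mathrm{d}x$. Substituting $s=\nu+z^{\alpha}$ and multiplying by $z^{\alpha-1}$ yields
\begin{equation*}
\frac{z^{\alpha-1}}{z^{\alpha}+\nu}-\int_{0}^{\infty}e^{-x}\frac{z^{\alpha-1}}{z^{\alpha}+\nu+\lambda x}\,\mathrm{d}x .
\end{equation*}
By \eqref{ltm12} with $\beta=\gamma=1$, the first piece is the transform of $E_{\alpha}(-\nu t^{\alpha})$. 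Similarly, the integrand is the transform of $E_{\alpha}(-(\nu+\lambda x)t^{\alpha})$ for each fixed $x\ge 0$.

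The one step needing care is exchanging the $x$-integral with the inverse Laplace transform. I would handle this by verifying the forward direction instead. Since $0\le E_{\alpha}(-ct^{\alpha})\le 1$ for $c\ge0$ and $0<\alpha\le1$ (complete monotonicity), the function $e^{-zt}e^{-x}E_{\alpha}(-(\nu+\lambda x)t^{\alpha})$ is integrable on $(0,\infty)^2$ for $z>0$. Fubini then shows that the Laplace transform of $\int_0^\infty e^{-x}E_{\alpha}(-(\nu+\lambda x)t^{\alpha})\,\mathrm{d}x$ is exactly the integral above. Finally, uniqueness of the Laplace transform for continuous functions gives the stated expression for $h^{\alpha,\nu}_{1,0}(t)$.
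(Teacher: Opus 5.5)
Your proof is correct, but it takes a different and shorter route than the paper. The paper starts from Orsingher--Polito's transform $\tilde h^{\alpha,0}_{1,0}(z)=\lambda\int_0^\infty e^{-x}z^{\alpha-1}(z^\alpha+\lambda x)^{-2}\,\mathrm{d}x$ and pushes it through Proposition \ref{prop1}, so it has to invert the \emph{squared} factor $z^{\alpha-1}(z^\alpha+\nu+\lambda x)^{-2}$. To do this it:
\begin{itemize}
\item writes the inverse as a convolution of $y^{\alpha-1}E_{\alpha,\alpha}$ with $E_\alpha$;
\item collapses that convolution to $\frac{t^\alpha}{\alpha}E_{\alpha,\alpha}(-(\nu+\lambda x)t^\alpha)$ using Orsingher--Polito's identity (2.30);
\item substitutes $u=-(\nu+\lambda x)t^\alpha$ and uses $\frac1\alpha E_{\alpha,\alpha}=E_\alpha'$;
\item integrates by parts in $u$ and changes variables back.
\end{itemize}

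You instead start from \eqref{pmnzzz} and the classical formula $p_{1,0}(t)=\lambda t/(1+\lambda t)$. The mixture representation $(1+\lambda t)^{-1}=\int_0^\infty e^{-x(1+\lambda t)}\,\mathrm{d}x$ gives only first-power denominators, so everything inverts termwise by \eqref{ltm12}. The two Laplace-domain expressions differ by a single integration by parts in $x$:
\[
\frac{1}{s}-\int_0^\infty\frac{e^{-x}}{s+\lambda x}\,\mathrm{d}x=\lambda\int_0^\infty\frac{e^{-x}}{(s+\lambda x)^2}\,\mathrm{d}x .
\]
In effect you do the paper's integration by parts before inverting rather than after. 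This removes the need for the convolution identity and the $u$-substitution.

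Your form also makes visible that $\frac{z^{\alpha-1}}{z^\alpha+\nu}+\frac{\nu z^{-1}}{z^\alpha+\nu}=\frac1z$, so the first and last terms of the statement sum to $1$. In addition, you justify interchanging the $x$-integral with Laplace inversion, which the paper leaves implicit.

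The paper's route has its own advantages. It stays inside the framework of Proposition \ref{prop1}, using time-changed BDP transforms. It also produces the convolution form \eqref{p10alphanut}, which it reuses in Theorem \ref{thmnonzeroforeqrates}. Yours relies instead on the classical closed form of $p_{1,0}$.

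One small point to tidy. As stated in the paper, \eqref{ltm12} requires $z>|\omega|^{1/\alpha}$. For fixed $z$ this fails once $\nu+\lambda x>z^\alpha$, so you need it for every $x\ge 0$ at once. The fix is the true fact that for $\omega\le 0$ the identity holds for all $z>0$. Both sides are analytic in the right half-plane (the left side being the transform of a bounded function) and they agree for large $z$.
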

\begin{proof}
From Eq. (2.28) of Orsingher and Polito (2011), we have
\begin{equation}\label{p10alpha0}
	\tilde{h}^{\alpha,0}_{1,0}(z)=\lambda\int_{0}^{\infty}e^{-x}\frac{z^{\alpha-1}}{(z^{\alpha}+\lambda x)^{2}}\mathrm{d}x.
\end{equation}
As zero is the absorbing state in time-changed LBDP, we get
\begin{equation}\label{p1alpha0z}
	\tilde{h}^{\alpha,0}_{0,0}(z)=1/z.
\end{equation}
By using \eqref{p10alpha0} and \eqref{p1alpha0z} in Proposition \ref{prop1}, we obtain
\begin{equation}\label{p10alphanuz}
	\tilde{h}^{\alpha,\nu}_{1,0}(z)=\lambda\int_{0}^{\infty}e^{-x}\frac{z^{\alpha-1}}{(z^{\alpha}+\nu+\lambda x)^2}\mathrm{d}x+\nu\frac{z^{-1}}{z^{\alpha}+\nu}.
\end{equation}
On taking the inverse Laplace transform of \eqref{p10alphanuz}, we get
\begin{equation}\label{p10alphanut}
	h^{\alpha,\nu}_{1,0}(t)
	=\lambda\int_{0}^{\infty}e^{-x}\int_{0}^{t}y^{\alpha-1}E_{\alpha,\alpha}(-(\nu+\lambda x)y^{\alpha})E_{\alpha}(-(\nu+\lambda x)(t-y)^\alpha)\mathrm{d}y\mathrm{d}x+\nu t^{\alpha}E_{\alpha,\alpha+1}(-\nu t^{\alpha}).
\end{equation}
From Eq. (2.30) of Orsingher and Polito (2011), we have
\begin{equation}\label{ConvInt}
	\int_{0}^{t}y^{\alpha-1}E_{\alpha,\alpha}(-(\nu+\lambda x)y^{\alpha})E_{\alpha}(-(\nu+\lambda x)(t-y)^\alpha)\mathrm{d}y=\frac{t^{\alpha}}{\alpha}E_{\alpha,\alpha}(-(\nu+\lambda x)t^{\alpha}).
\end{equation}
On substituting \eqref{ConvInt} in \eqref{p10alphanut}, we obtain
\begin{align}\label{p10alphanutt}
	h^{\alpha,\nu}_{1,0}(t)
	&=\frac{\lambda t^{\alpha}}{\alpha}\int_{0}^{\infty}e^{-x}E_{\alpha,\alpha}(-(\nu+\lambda x)t^{\alpha})\mathrm{d}x+\nu t^{\alpha}E_{\alpha,\alpha+1}(-\nu t^{\alpha})\nonumber\\
	&=\frac{1}{\alpha}\int_{-\infty}^{-\nu t^{\alpha}}e^{\frac{ut^{-\alpha}+\nu}{\lambda}}E_{\alpha,\alpha}(u)\mathrm{d}u+\nu t^{\alpha}E_{\alpha,\alpha+1}(-\nu t^{\alpha})\nonumber\\
	&=\int_{-\infty}^{-\nu t^{\alpha}}e^{\frac{ut^{-\alpha}+\nu}{\lambda}}\frac{\mathrm{d}}{\mathrm{d}u}E_{\alpha}(u)\mathrm{d}u+\nu t^{\alpha}E_{\alpha,\alpha+1}(-\nu t^{\alpha})\nonumber\\
	&=E_{\alpha}(-\nu t^{\alpha})-\frac{1}{\lambda t^{\alpha}}\int_{-\infty}^{-\nu t^{\alpha}}e^{\frac{ut^{-\alpha}+\nu}{\lambda}}E_{\alpha}(u)\mathrm{d}u+\nu t^{\alpha}E_{\alpha,\alpha+1}(-\nu t^{\alpha}).
\end{align}
By change of variables in \eqref{p10alphanutt}, we get the required result.
\end{proof}

\begin{theorem}\label{thm4.1}
For $\lambda>\mu$, the extinction probability of time-changed LBDPC $\{N^{\alpha,\nu}(t)\}_{t\geq0}$ is given by 
\begin{equation*}
	h^{\alpha,\nu}_{1,0}(t)=\frac{\mu}{\lambda}E_{\alpha}(-\nu t^{\alpha})-\Big(\frac{\lambda-\mu}{\lambda}\Big)\sum_{k=1}^{\infty}\Big(\frac{\mu}{\lambda}\Big)^{k}E_{\alpha}(-(\nu+(\lambda-\mu)k) t^{\alpha})+\nu t^{\alpha}E_{\alpha,\alpha+1}(-\nu t^{\alpha}).
\end{equation*}
\end{theorem}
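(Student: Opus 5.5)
The plan is to work entirely in the Laplace domain, exactly as in the proof of Theorem \ref{thmzeroprobforeqrates}. We start from the classical extinction probability of the linear birth-death process and expand it into a series of exponentials. We then transport it to the time-changed LBDPC through the relation obtained in the proof of Proposition \ref{prop1}, and finally invert term by term using \eqref{ltm12}.

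\textbf{Step 1 (classical formula).} For $\lambda>\mu$, the linear BDP without catastrophe and with $\alpha=1$ has
\begin{equation*}
h_{1,0}(t)=\frac{\mu\big(1-e^{-(\lambda-\mu)t}\big)}{\lambda-\mu e^{-(\lambda-\mu)t}}.
\end{equation*}
Writing $e=e^{-(\lambda-\mu)t}$, we have $\frac{\mu}{\lambda}e<1$, so we expand $(1-\frac{\mu}{\lambda}e)^{-1}$ as a geometric series. After re-indexing and telescoping, this gives
\begin{equation*}
h_{1,0}(t)=\frac{\mu}{\lambda}-\Big(\frac{\lambda-\mu}{\lambda}\Big)\sum_{k=1}^{\infty}\Big(\frac{\mu}{\lambda}\Big)^{k}e^{-(\lambda-\mu)kt}.
\end{equation*}
Since the series has nonnegative terms and is dominated by a convergent geometric series, its Laplace transform can be taken termwise:
\begin{equation*}
\tilde h_{1,0}(s)=\frac{\mu}{\lambda s}-\Big(\frac{\lambda-\mu}{\lambda}\Big)\sum_{k=1}^{\infty}\Big(\frac{\mu}{\lambda}\Big)^{k}\frac{1}{s+(\lambda-\mu)k}.
\end{equation*}

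\textbf{Step 2 (transfer to the time-changed LBDPC).} By \eqref{pmnzzz} with $m=1$, $n=0$, we have
\begin{equation*}
\tilde h^{\alpha,\nu}_{1,0}(z)=z^{\alpha-1}\tilde h_{1,0}(\nu+z^{\alpha})+\frac{\nu}{z}\tilde h_{0,0}(\nu+z^{\alpha}).
\end{equation*}
Zero is absorbing, so $\tilde h_{0,0}(s)=1/s$. Substituting the expansion from Step 1 yields
\begin{equation*}
\tilde h^{\alpha,\nu}_{1,0}(z)=\frac{\mu}{\lambda}\frac{z^{\alpha-1}}{z^{\alpha}+\nu}-\Big(\frac{\lambda-\mu}{\lambda}\Big)\sum_{k=1}^{\infty}\Big(\frac{\mu}{\lambda}\Big)^{k}\frac{z^{\alpha-1}}{z^{\alpha}+\nu+(\lambda-\mu)k}+\frac{\nu z^{-1}}{z^{\alpha}+\nu}.
\end{equation*}

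\textbf{Step 3 (inversion).} We apply \eqref{ltm12} to each term:
\begin{itemize}
\item with $\gamma=\beta=1$, the term $z^{\alpha-1}/(z^{\alpha}+c)$ inverts to $E_{\alpha}(-ct^{\alpha})$, for $c=\nu$ and for $c=\nu+(\lambda-\mu)k$;
\item with $\gamma=1$ and $\beta=\alpha+1$, the term $z^{-1}/(z^{\alpha}+\nu)$ inverts to $t^{\alpha}E_{\alpha,\alpha+1}(-\nu t^{\alpha})$.
\end{itemize}
Together these give exactly the stated formula.

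The main obstacle is justifying the termwise inverse Laplace transform of the infinite series. We plan to handle it by noting that, for $0<\alpha\le1$ and $x\ge0$, $0\le E_{\alpha}(-x)\le1$ by complete monotonicity. Hence the candidate series $\sum_k(\mu/\lambda)^kE_{\alpha}(-(\nu+(\lambda-\mu)k)t^{\alpha})$ converges uniformly in $t$ and is dominated by a geometric series. Its Laplace transform can therefore be computed termwise by dominated convergence, and it matches the series in Step 2. Uniqueness of the Laplace transform then concludes the proof.
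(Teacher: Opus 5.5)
Your proposal is correct and is essentially the paper's proof. Both arrive at the same transform \eqref{p10z} and invert it termwise via \eqref{ltm12}. The only differences are these:
\begin{itemize}
\item You build the input from the classical closed form through \eqref{pmnzzz}, whereas the paper quotes Orsingher and Polito's transform of $h^{\alpha,0}_{1,0}$ and uses Proposition \ref{prop1}. The two are equivalent, since $\tilde h^{\alpha,0}_{1,0}(z)=z^{\alpha-1}\tilde h_{1,0}(z^{\alpha})$.
\item You justify the termwise inversion by domination and uniqueness of the Laplace transform, which the paper leaves implicit.
\end{itemize}
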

\begin{proof}
From Eq. (2.16) of Orsingher and Polito (2011), we have
\begin{equation}\label{bfp10z}
	\tilde{h}^{\alpha,0}_{1,0}(z)=\frac{\mu}{\lambda z}-\Big(\frac{\lambda-\mu}{\lambda}\Big)\sum_{k=1}^{\infty}\Big(\frac{\mu}{\lambda}\Big)^{k}\frac{z^{\alpha-1}}{z^{\alpha}+(\lambda-\mu)k}.
\end{equation}
By substituting \eqref{p1alpha0z} and \eqref{bfp10z} in Proposition \ref{prop1}, we obtain 
\begin{equation}\label{p10z}
	\tilde{h}^{\alpha,\nu}_{1,0}(z)= \frac{\mu}{\lambda}\frac{z^{\alpha-1}}{z^{\alpha}+\nu}-\Big(\frac{\lambda-\mu}{\lambda}\Big)\sum_{k=1}^{\infty}\Big(\frac{\mu}{\lambda}\Big)^{k}\frac{z^{\alpha-1}}{z^{\alpha}+\nu+(\lambda-\mu)k}+\frac{\nu z^{-1}}{z^{\alpha}+\nu}.
\end{equation}
On taking the inverse Laplace transform of \eqref{p10z} and using \eqref{ltm12}, we get the required result.
\end{proof}

The proof of the next result follows similar lines to that of Theorem \ref{thm4.1}. Here, we use the following result (see Orsingher and Polito (2011), Eq. (2.23)):
\begin{equation*}\label{bfp10zz}
	\tilde{h}^{\alpha,0}_{1,0}(z)=\frac{1}{z}-\Big(\frac{\mu-\lambda}{\lambda}\Big)\sum_{k=1}^{\infty}\Big(\frac{\lambda}{\mu}\Big)^{k}\frac{z^{\alpha-1}}{z^{\alpha}+(\mu-\lambda)k}.
\end{equation*}

\begin{theorem}\label{thm4.2}
For $\lambda<\mu$, the extinction probability of time-changed LBDPC $\{N^{\alpha,\nu}(t)\}_{t\geq0}$ is given by 
\begin{equation*}
	h^{\alpha,\nu}_{1,0}(t)=E_{\alpha}(-\nu t^{\alpha})-\Big(\frac{\mu-\lambda}{\lambda}\Big)\sum_{k=1}^{\infty}\Big(\frac{\lambda}{\mu}\Big)^{k}E_{\alpha}(-(\nu+(\mu-\lambda)k)t^{\alpha})+\nu t^{\alpha}E_{\alpha,\alpha+1}(-\nu t^{\alpha}).
\end{equation*}
\end{theorem}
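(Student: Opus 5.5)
The plan follows the proof of Theorem \ref{thm4.1}: compute the Laplace transform of $h^{\alpha,\nu}_{1,0}$ from Proposition \ref{prop1}, then invert term by term with \eqref{ltm12}.

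The inputs are the expression for $\tilde{h}^{\alpha,0}_{1,0}(z)$ quoted just before the statement (Orsingher and Polito (2011), Eq. (2.23)) and $\tilde{h}^{\alpha,0}_{0,0}(z)=1/z$ from \eqref{p1alpha0z}, which holds because zero is absorbing.

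\textbf{Step 1: evaluate Proposition \ref{prop1}.} Put $s=(\nu+z^{\alpha})^{1/\alpha}$, so that $s^{\alpha}=\nu+z^{\alpha}$. Substituting both inputs into \eqref{pmnzz} with $m=1$, $n=0$ gives three contributions.
\begin{itemize}
\item The term $1/s$ of $\tilde{h}^{\alpha,0}_{1,0}(s)$ becomes $z^{\alpha-1}/(z^{\alpha}+\nu)$.
\item Each summand $s^{\alpha-1}/(s^{\alpha}+(\mu-\lambda)k)$ becomes $z^{\alpha-1}/(z^{\alpha}+\nu+(\mu-\lambda)k)$. This is because the prefactor $(\nu+z^{\alpha})^{(1-\alpha)/\alpha}$ exactly cancels $s^{\alpha-1}$.
\item The catastrophe term $\frac{\nu}{z}\tilde{h}^{\alpha,0}_{0,0}(s)$ becomes $\nu z^{-1}/(z^{\alpha}+\nu)$.
\end{itemize}
Hence
\begin{equation*}
\tilde{h}^{\alpha,\nu}_{1,0}(z)=\frac{z^{\alpha-1}}{z^{\alpha}+\nu}-\Big(\frac{\mu-\lambda}{\lambda}\Big)\sum_{k=1}^{\infty}\Big(\frac{\lambda}{\mu}\Big)^{k}\frac{z^{\alpha-1}}{z^{\alpha}+\nu+(\mu-\lambda)k}+\frac{\nu z^{-1}}{z^{\alpha}+\nu}.
\end{equation*}

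\textbf{Step 2: invert each term with \eqref{ltm12}.}
\begin{itemize}
\item For the first term and each summand, take $\gamma=\beta=1$ with $\omega=-\nu$ and $\omega=-(\nu+(\mu-\lambda)k)$ respectively. This gives $E_{\alpha}(-\nu t^{\alpha})$ and $E_{\alpha}(-(\nu+(\mu-\lambda)k)t^{\alpha})$.
\item For the last term, take $\gamma=1$, $\beta=\alpha+1$, noting $z^{\alpha-\beta}=z^{-1}$. This gives $\nu t^{\alpha}E_{\alpha,\alpha+1}(-\nu t^{\alpha})$.
\end{itemize}
Combining these yields the stated formula.

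\textbf{Main obstacle: inverting the series term by term.} I expect this to be the only point needing care. Since $\lambda<\mu$, the weights $(\lambda/\mu)^{k}$ are geometric. Moreover, $0\le E_{\alpha}(-x)\le 1$ for $x\ge0$ and $0<\alpha\le1$, by complete monotonicity of $E_{\alpha}(-x)$. The series of Mittag-Leffler functions is therefore dominated by a convergent geometric series, uniformly in $t\ge0$. Dominated convergence then justifies exchanging the sum with the Laplace transform, so the termwise inversion is legitimate.
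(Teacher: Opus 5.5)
Your proposal is correct and follows essentially the same route as the paper: substitute the quoted Orsingher--Polito Eq.~(2.23) and $\tilde{h}^{\alpha,0}_{0,0}(z)=1/z$ into Proposition~\ref{prop1}, then invert term by term via \eqref{ltm12}, exactly as in the proof of Theorem~\ref{thm4.1}. Your dominated-convergence justification of the termwise inversion, using $0\le E_{\alpha}(-x)\le 1$ and the geometric weights $(\lambda/\mu)^{k}$, is a useful detail that the paper leaves implicit.
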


\begin{remark}
	On substituting $\nu=0$ in Theorem \ref{thmzeroprobforeqrates}, Theorem \ref{thm4.1} and Theorem \ref{thm4.2}, the extinction probability of time-changed LBPDC reduces to that of time-changed LBDP (see Orsingher and Polito (2011), Eq. (1.11)).
\end{remark}

Next, we obtain the state probabilities of time-changed LBDPC.

\begin{theorem}\label{thmnonzeroforeqrates}
For $\lambda=\mu$, the state probabilities of time-changed LBDPC $\{N^{\alpha,\nu}(t)\}_{t\geq0}$ are given by
\begin{equation*}
	h^{\alpha,\nu}_{1,n}(t)=\frac{(-\lambda)^{n-1}}{n!}\frac{\mathrm{d}^{n}}{\mathrm{d}\lambda^{n}}\big(\lambda\big(E_{\alpha}(-\nu t^{\alpha})+ \nu t^{\alpha}E_{\alpha,\alpha+1}(-\nu t^{\alpha})- h^{\alpha,\nu}_{1,0}(t)\big)\big),\, n\geq1.
\end{equation*}
\end{theorem}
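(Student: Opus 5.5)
The plan is to work entirely in the Laplace domain. I will use Proposition \ref{prop1} to reduce the statement to a known identity for the classical linear birth-death process with $\lambda=\mu$, and then invert.

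First, take $m=1$ and $n\geq1$ in Proposition \ref{prop1}. Since $0$ is absorbing for the LBDP, $h^{\alpha,0}_{0,n}\equiv0$ for $n\geq1$, so the second term in \eqref{pmnzz} vanishes. Rewriting via $\tilde{h}^{\alpha,0}_{1,n}(z)=z^{\alpha-1}\tilde{h}_{1,n}(z^{\alpha})$, exactly as in the proof of Proposition \ref{prop1}, gives
\begin{equation*}
\tilde{h}^{\alpha,\nu}_{1,n}(z)=z^{\alpha-1}\tilde{h}_{1,n}(\nu+z^{\alpha}),\quad n\geq1,
\end{equation*}
where $h_{1,n}$ are the state probabilities of the classical LBDP without catastrophe. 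In the same way, for $n=0$ we use $\tilde{h}_{0,0}(s)=1/s$ to get
\begin{equation*}
\tilde{h}^{\alpha,\nu}_{1,0}(z)=z^{\alpha-1}\tilde{h}_{1,0}(\nu+z^{\alpha})+\frac{\nu z^{-1}}{\nu+z^{\alpha}}.
\end{equation*}
By \eqref{ltm12}, the Laplace transform of $E_{\alpha}(-\nu t^{\alpha})+\nu t^{\alpha}E_{\alpha,\alpha+1}(-\nu t^{\alpha})$ is $\frac{z^{\alpha-1}}{z^{\alpha}+\nu}+\frac{\nu z^{-1}}{z^{\alpha}+\nu}$. Subtracting, the function $G(t):=E_{\alpha}(-\nu t^{\alpha})+\nu t^{\alpha}E_{\alpha,\alpha+1}(-\nu t^{\alpha})-h^{\alpha,\nu}_{1,0}(t)$ therefore has transform
\begin{equation*}
\tilde{G}(z)=z^{\alpha-1}\,\mathbb{L}\big(1-h_{1,0}(\cdot)\big)(\nu+z^{\alpha}).
\end{equation*}

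Next, I will use the classical $\lambda=\mu$ formulas $h_{1,0}(t)=\frac{\lambda t}{1+\lambda t}$ and $h_{1,n}(t)=\frac{(\lambda t)^{n-1}}{(1+\lambda t)^{n+1}}$. Since $\lambda(1-h_{1,0}(t))=\frac{\lambda}{1+\lambda t}$, a direct computation of the $n$th $\lambda$-derivative shows
\begin{equation*}
h_{1,n}(t)=\frac{(-\lambda)^{n-1}}{n!}\frac{\mathrm{d}^{n}}{\mathrm{d}\lambda^{n}}\big(\lambda(1-h_{1,0}(t))\big),\quad n\geq1,
\end{equation*}
which is the $\alpha=1$, $\nu=0$ case of the claim (cf. Orsingher and Polito (2011)). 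Take the Laplace transform at $s=\nu+z^{\alpha}$ and multiply by $z^{\alpha-1}$. The variables $s$ and $z$ do not depend on $\lambda$, so once $\mathrm{d}^n/\mathrm{d}\lambda^n$ is moved outside the Laplace integral we get
\begin{equation*}
\tilde{h}^{\alpha,\nu}_{1,n}(z)=\frac{(-\lambda)^{n-1}}{n!}\frac{\mathrm{d}^{n}}{\mathrm{d}\lambda^{n}}\big(\lambda\tilde{G}(z)\big).
\end{equation*}
Inverting in $z$, and again commuting $\mathrm{d}^n/\mathrm{d}\lambda^n$ with the inverse transform, gives the stated formula.

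The main obstacle is justifying the two interchanges of $\mathrm{d}^n/\mathrm{d}\lambda^n$ with the Laplace integral and its inverse. For the forward transform I would use dominated convergence: $\partial_\lambda^n\frac{\lambda}{1+\lambda t}$ equals $(-1)^{n-1}n!\,t^{n-1}(1+\lambda t)^{-n-1}$, which is bounded by $n!\,t^{n-1}$, locally uniformly in $\lambda>0$, and this is integrable against $e^{-st}$. For the inverse direction I would avoid contour arguments and write $G(t)$ as the subordination integral $\int_0^\infty e^{-\nu y}(1-h_{1,0}(y))\Pr\{Y_\alpha(t)\in\mathrm{d}y\}$. The $\lambda$-derivatives can then be taken under this integral by the same dominating bound. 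This makes both sides of the final identity equal to $\int_0^\infty e^{-\nu y}h_{1,n}(y)\Pr\{Y_\alpha(t)\in\mathrm{d}y\}$, which is $h^{\alpha,\nu}_{1,n}(t)$ for $n\geq1$.
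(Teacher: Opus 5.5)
Your argument is correct. It follows the same Laplace-domain mechanism as the paper (Proposition \ref{prop1}, then commuting the $\lambda$-derivative with the transforms), but it rests on a different base identity.

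The paper takes the Laplace transform of the already time-changed $\nu=0$ identity (Eq.\ (3.20) of Orsingher and Polito (2011)). It inserts this, together with $\tilde{h}^{\alpha,0}_{0,n}=0$, into Proposition \ref{prop1} and writes the result using the explicit transform $\lambda\int_{0}^{\infty}e^{-x}z^{\alpha-1}(z^{\alpha}+\nu+\lambda x)^{-2}\,\mathrm{d}x$. It then inverts by matching against \eqref{p10alphanut}, moving $\mathrm{d}^{n}/\mathrm{d}\lambda^{n}$ through the transform and its inverse without comment.

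You instead:
\begin{itemize}
\item descend to the classical LBDP via $\tilde{h}^{\alpha,\nu}_{1,n}(z)=z^{\alpha-1}\tilde{h}_{1,n}(\nu+z^{\alpha})$, the first line of the proof of Proposition \ref{prop1};
\item check the derivative identity by hand on $\lambda/(1+\lambda t)$;
\item identify $G$ only through its transform.
\end{itemize}
So you need neither the cited time-changed identity nor the explicit Mittag-Leffler form of $h^{\alpha,\nu}_{1,0}$. In return you actually justify the interchanges, which the paper does not. Your final step also makes the Laplace part dispensable: the theorem is just the classical identity integrated against $e^{-\nu y}\mathrm{Pr}\{Y_{\alpha}(t)\in\mathrm{d}y\}$.

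Two small points.
\begin{itemize}
\item Your transform computation shows that $E_{\alpha}(-\nu t^{\alpha})+\nu t^{\alpha}E_{\alpha,\alpha+1}(-\nu t^{\alpha})\equiv 1$, since its transform is $1/z$. Hence $G=1-h^{\alpha,\nu}_{1,0}$, and the statement is literally the classical identity with $h_{1,0}$ replaced by $h^{\alpha,\nu}_{1,0}$.
\item The dominating function $n!\,e^{-\nu y}y^{n-1}$ (and likewise for the intermediate derivatives) is bounded when $\nu>0$. For $\nu=0$, its integrability against the law of $Y_{\alpha}(t)$ requires $\mathbb{E}\big(Y_{\alpha}(t)^{n-1}\big)<\infty$. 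This holds because $\mathbb{E}\big(e^{sY_{\alpha}(t)}\big)=E_{\alpha}(st^{\alpha})<\infty$ for all $s$, but it is worth stating.
\end{itemize}
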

\begin{proof}
On taking the Laplace transform of Eq. (3.20) of Orsingher and Polito (2011), we get
\begin{equation}\label{p1nalpha}
	\tilde{h}^{\alpha,0}_{1,n}(z)=\frac{(-\lambda)^{n-1}}{n!}\frac{\mathrm{d}^{n}}{\mathrm{d}\lambda^{n}}\Big(\lambda\Big(\frac{1}{z}-\tilde{h}^{\alpha,0}_{1,0}(z)\Big)\Big).
\end{equation}
In the time-changed LBDP, zero is the absorbing state. Thus, we have
\begin{equation}\label{p1alpha0nz}
	\tilde{h}^{\alpha,0}_{0,n}(z)=0,\,n>0.
\end{equation}
On substituting \eqref{p1nalpha} and \eqref{p1alpha0nz} in Proposition \ref{prop1}, we obtain
\begin{align}\label{p1nalphaz}
	\tilde{h}^{\alpha,\nu}_{1,n}(z)
	&=\frac{(-\lambda)^{n-1}z^{\alpha-1}(z^{\alpha}+\nu)^{\frac{1-\alpha}{\alpha}}}{n!}\frac{\mathrm{d}^{n}}{\mathrm{d}\lambda^{n}}\Big(\lambda\Big(\frac{1}{(z^{\alpha}+\nu)^{1/\alpha}}-\tilde{h}^{\alpha,0}_{1,0}((z^{\alpha}+\nu)^{1/\alpha})\Big)\Big)\nonumber\\
	&=\frac{(-\lambda)^{n-1}}{n!}\frac{\mathrm{d}^{n}}{\mathrm{d}\lambda^{n}}\Big(\lambda\Big(\frac{z^{\alpha-1}}{z^{\alpha}+\nu}-\lambda\int_{0}^{\infty}e^{-x}\frac{z^{\alpha-1}}{(z^{\alpha}+\nu+\lambda x)^2}\mathrm{d}x\Big)\Big),
\end{align}
where we have used \eqref{p10alpha0} in the last step.
On taking the inverse Laplace transform of \eqref{p1nalphaz}, we get
\begin{align}\label{p1nalphat}
	h^{\alpha,\nu}_{1,n}(t)
	&=\frac{(-\lambda)^{n-1}}{n!}\frac{\mathrm{d}^{n}}{\mathrm{d}\lambda^{n}}\Big(\lambda\Big(E_{\alpha}(-\nu t^{\alpha})-\lambda\int_{0}^{\infty}e^{-x}\int_{0}^{t}y^{\alpha-1}E_{\alpha,\alpha}(-(\nu+\lambda x)y^{\alpha})\nonumber\\
	&\hspace{7.5cm} \cdot E_{\alpha}(-(\nu+\lambda x)(t-y)^{\alpha})\mathrm{d}y\mathrm{d}x\Big)\Big).
\end{align}
By using \eqref{p10alphanut} in \eqref{p1nalphat}, the required result is obtained.
\end{proof}

\begin{theorem}\label{thm4.3}
For $\lambda>\mu$, the state probabilities of time-changed LBDPC $\{N^{\alpha,\nu}(t)\}_{t\geq0}$ are given by
\begin{equation*}
	h^{\alpha,\nu}_{1,n}(t)
	=\Big(\frac{\lambda-\mu}{\lambda}\Big)^{2}\sum_{k=0}^{\infty}\sum_{r=0}^{n-1}(-1)^{r}\binom{k+n}{k}\binom{n-1}{r}\Big(\frac{\mu}{\lambda}\Big)^{k}\nonumber
	E_{\alpha}(-(\nu+(\lambda-\mu)(k+r+1))t^{\alpha}),\, n\geq1.
\end{equation*}
\end{theorem}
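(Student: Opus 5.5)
The plan follows the template of Theorem \ref{thm4.1} and Theorem \ref{thmnonzeroforeqrates}: obtain the Laplace transform of $h^{\alpha,0}_{1,n}$ for the time-changed LBDP, apply Proposition \ref{prop1}, and invert termwise.

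First I will write the Laplace transform of the time-changed LBDP probabilities for $\lambda>\mu$ and $n\ge1$. Orsingher and Polito (2011) give $h^{\alpha,0}_{1,n}(t)$ in this case as a double series of one-parameter Mittag-Leffler functions. Taking its Laplace transform termwise gives
\begin{equation*}
\tilde{h}^{\alpha,0}_{1,n}(z)=\Big(\frac{\lambda-\mu}{\lambda}\Big)^{2}\sum_{k=0}^{\infty}\sum_{r=0}^{n-1}(-1)^{r}\binom{k+n}{k}\binom{n-1}{r}\Big(\frac{\mu}{\lambda}\Big)^{k}\frac{z^{\alpha-1}}{z^{\alpha}+(\lambda-\mu)(k+r+1)}.
\end{equation*}
If I want to avoid citing that formula, I can derive it directly. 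The classical generating-function solution for $\alpha=1$ is $p_{1,n}(t)=(\lambda-\mu)^2e^{-(\lambda-\mu)t}(1-e^{-(\lambda-\mu)t})^{n-1}\lambda^{n-1}/(\lambda-\mu e^{-(\lambda-\mu)t})^{n+1}$. I expand $(\lambda-\mu e^{-(\lambda-\mu)t})^{-(n+1)}$ as a negative binomial series in $(\mu/\lambda)e^{-(\lambda-\mu)t}$ and $(1-e^{-(\lambda-\mu)t})^{n-1}$ binomially. This writes $p_{1,n}(t)$ as the sum above with $e^{-(\lambda-\mu)(k+r+1)t}$ in place of the Mittag-Leffler terms. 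Laplace transforming at $z^\alpha$ and using $\tilde h^{\alpha,0}_{1,n}(z)=z^{\alpha-1}\tilde p_{1,n}(z^\alpha)$ then gives the display.

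Next I apply Proposition \ref{prop1} with $m=1$. Since zero is absorbing, $\tilde h^{\alpha,0}_{0,n}=0$ for $n\ge1$, as in \eqref{p1alpha0nz}, so only the first term survives. With $w=(\nu+z^\alpha)^{1/\alpha}$, each summand contributes $z^{\alpha-1}w^{1-\alpha}\cdot w^{\alpha-1}/(w^\alpha+c)=z^{\alpha-1}/(z^\alpha+\nu+c)$, where $c=(\lambda-\mu)(k+r+1)$. Hence $\tilde h^{\alpha,\nu}_{1,n}(z)$ is the same double series with the denominators shifted by $\nu$. Inverting termwise via \eqref{ltm12} with $\beta=\gamma=1$ and $\omega=-(\nu+c)$ yields the stated formula.

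The main obstacle is justifying the termwise Laplace transform and inversion of the infinite $k$-series. I would use $0\le E_\alpha(-x t^\alpha)\le1$ for $x\ge0$ (complete monotonicity) together with $\sum_k\binom{k+n}{k}(\mu/\lambda)^k=(1-\mu/\lambda)^{-(n+1)}<\infty$, since $\mu<\lambda$. The finite $r$-sum causes no issue. This gives absolute and uniform convergence on $t\ge0$, and the same bound controls the transformed series for $z>0$, so dominated convergence applies.
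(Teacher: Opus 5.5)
Your proposal is correct and follows the paper's proof: take the Laplace transform of the time-changed LBDP probabilities from Orsingher and Polito (2011, Eq. (3.3)), substitute it together with $\tilde{h}^{\alpha,0}_{0,n}=0$ into Proposition \ref{prop1} so that each denominator shifts by $\nu$, and invert termwise via \eqref{ltm12}. Your optional re-derivation from the classical formula for $p_{1,n}(t)$ and your dominated-convergence justification of the termwise transforms are correct. That justification uses $0\le E_{\alpha}(-xt^{\alpha})\le 1$ together with $\sum_{k}\binom{k+n}{k}(\mu/\lambda)^{k}<\infty$, a step the paper leaves implicit.
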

\begin{proof}
From Eq. (3.3) of Orsingher and Polito (2011), we have
\begin{equation}\label{bfp1nz}
	\tilde{h}^{\alpha,0}_{1,n}(z)=\Big(\frac{\lambda-\mu}{\lambda}\Big)^{2}\sum_{k=0}^{\infty}\sum_{r=0}^{n-1}(-1)^{r}\binom{k+n}{k}\binom{n-1}{r}\Big(\frac{\mu}{\lambda}\Big)^{k}\frac{z^{\alpha-1}}{z^{\alpha}+(\lambda-\mu)(k+r+1)}.
\end{equation}
On substituting \eqref{p1alpha0nz} and \eqref{bfp1nz} in Proposition \ref{prop1}, we obtain
{\small\begin{equation}\label{p1nz}
	\tilde{h}^{\alpha,\nu}_{1,n}(z)
	=\Big(\frac{\lambda-\mu}{\lambda}\Big)^{2}\sum_{k=0}^{\infty}\sum_{r=0}^{n-1}(-1)^{r}\binom{k+n}{k}\binom{n-1}{r}\Big(\frac{\mu}{\lambda}\Big)^{k}\frac{z^{\alpha-1}}{z^{\alpha}+\nu+(\lambda-\mu)(k+r+1)}.
\end{equation}}
By taking the inverse Laplace transform of \eqref{p1nz} and using \eqref{ltm12}, we get the required result.
\end{proof}

The proof of the next result follows similar lines to that of Theorem \ref{thm4.3}. Here, we use the following result (see Orsingher and Polito (2011), Eq. (3.18)):
\begin{equation*}\label{bfp1nz2}
	\tilde{h}^{\alpha,0}_{1,n}(z)=\Big(\frac{\lambda}{\mu}\Big)^{n-1}\Big(\frac{\mu-\lambda}{\lambda}\Big)^{2}\sum_{k=0}^{\infty}\sum_{r=0}^{n-1}(-1)^{r}\binom{k+n}{k}\binom{n-1}{r}\Big(\frac{\lambda}{\mu}\Big)^{k}\frac{z^{\alpha-1}}{z^{\alpha}+(\mu-\lambda)(k+r+1)}.
\end{equation*}
\begin{theorem}\label{thm4.4}
For $\lambda<\mu$, the state probabilities of time-changed LBDPC $\{N^{\alpha,\nu}(t)\}_{t\geq0}$ are given by
{\footnotesize\begin{align*}
	h^{\alpha,\nu}_{1,n}(t)
	&=\Big(\frac{\lambda}{\mu}\Big)^{n-1}\Big(\frac{\mu-\lambda}{\mu}\Big)^{2}\sum_{k=0}^{\infty}\sum_{r=0}^{n-1}(-1)^{r}\binom{k+n}{k}\binom{n-1}{r}\Big(\frac{\lambda}{\mu}\Big)^{k} E_{\alpha}(-(\nu+(\mu-\lambda)(k+r+1))t^{\alpha}),\,n\geq1.
\end{align*}}
\end{theorem}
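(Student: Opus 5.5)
We follow the same route as for Theorem \ref{thm4.3}, replacing Eq. (3.3) of Orsingher and Polito (2011) by their Eq. (3.18), quoted just before the statement. That identity gives $\tilde{h}^{\alpha,0}_{1,n}(z)$ for $\lambda<\mu$ as a double series whose $z$-dependence enters only through
\begin{equation*}
\frac{z^{\alpha-1}}{z^{\alpha}+(\mu-\lambda)(k+r+1)}.
\end{equation*}
We then apply Proposition \ref{prop1} with $m=1$ and $n\geq1$. Since zero is absorbing for the time-changed LBDP, \eqref{p1alpha0nz} gives $\tilde{h}^{\alpha,0}_{0,n}=0$. The second term of \eqref{pmnzz} therefore drops out, leaving
\begin{equation*}
\tilde{h}^{\alpha,\nu}_{1,n}(z)=z^{\alpha-1}(\nu+z^{\alpha})^{\frac{1-\alpha}{\alpha}}\,\tilde{h}^{\alpha,0}_{1,n}\big((\nu+z^{\alpha})^{1/\alpha}\big).
\end{equation*}

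Next we evaluate the generic term at $w=(\nu+z^{\alpha})^{1/\alpha}$. We have $w^{\alpha-1}=(\nu+z^{\alpha})^{\frac{\alpha-1}{\alpha}}$, and this cancels the prefactor $(\nu+z^{\alpha})^{\frac{1-\alpha}{\alpha}}$. Also $w^{\alpha}=z^{\alpha}+\nu$. Hence each summand becomes
\begin{equation*}
\frac{z^{\alpha-1}}{z^{\alpha}+\nu+(\mu-\lambda)(k+r+1)},
\end{equation*}
multiplied by the same coefficients $(\lambda/\mu)^{n-1}(\cdot)^2(-1)^r\binom{k+n}{k}\binom{n-1}{r}(\lambda/\mu)^k$. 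We then invert term by term using \eqref{ltm12} with $\gamma=\beta=1$ and $\omega=-(\nu+(\mu-\lambda)(k+r+1))$. This gives $E_{\alpha}(-(\nu+(\mu-\lambda)(k+r+1))t^{\alpha})$, which is the claimed formula.

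The only delicate points are bookkeeping and justification. First, the prefactor is written $((\mu-\lambda)/\mu)^2$ in the statement but $((\mu-\lambda)/\lambda)^2$ in the quoted Eq. (3.18). We would check this against Orsingher and Polito (2011), for instance via the case $\alpha=1$, $\nu=0$, $n=1$, and carry the correct constant through, since the argument itself does not alter it. Second, we must justify interchanging the inverse Laplace transform with the double sum over $k$. Here the inner sum over $r$ is finite, and the factor $(\lambda/\mu)^k$ together with $\binom{k+n}{k}$ grows only polynomially in $k$. Since $0\leq E_{\alpha}(-x)\leq1$ for $x\geq0$, the series converges absolutely and uniformly in $t$, so termwise inversion (equivalently, dominated convergence on the Laplace side for $z>0$) is legitimate.
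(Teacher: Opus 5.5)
Your proposal is correct and follows the paper's own route: Proposition \ref{prop1} with $\tilde{h}^{\alpha,0}_{0,n}=0$, the substitution $w=(\nu+z^{\alpha})^{1/\alpha}$, which cancels the prefactor and shifts $z^{\alpha}$ to $z^{\alpha}+\nu$, and then termwise inversion via \eqref{ltm12}. On the constant you flagged, the check $h^{\alpha,\nu}_{1,1}(0)=1$ settles it: this requires $\big(\frac{\mu-\lambda}{\mu}\big)^{2}\sum_{k\geq0}(k+1)(\lambda/\mu)^{k}=1$, so $\big(\frac{\mu-\lambda}{\mu}\big)^{2}$ in the statement is correct and the $\lambda$ in the denominator of the quoted Eq. (3.18) is a typo.
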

%

\begin{remark}
	For $\nu=0$ in Theorem \ref{thmnonzeroforeqrates}, Theorem \ref{thm4.3} and Theorem \ref{thm4.4}, the state probabilities of time-changed LBPDC reduces to that of time-changed LBDP (see Orsingher and Polito (2011), Eq. (1.13)).
\end{remark}

Let us define the probability generating function for time-changed LBDPC as follows:
\begin{equation}\label{pgf}
	G_{m}^{\alpha,\nu}(x,t)=\sum_{n=0}^{\infty}x^{n}h^{\alpha,\nu}_{m,n}(t),\,t\geq0,\,|x|\leq1.
\end{equation}
On taking the Caputo derivative of order $0<\alpha\leq1$ on both sides of \eqref{pgf} and using \eqref{DE7}, we obtain
\begin{equation}\label{pgfDE}
	\frac{\partial^{\alpha}}{\partial t^{\alpha}}G_{m}^{\alpha,\nu}(x,t)=\frac{\partial}{\partial x}G_{m}^{\alpha,\nu}(x,t)(\lambda x-\mu)(x-1)+\nu(1-G_{m}^{\alpha,\nu}(x,t)),
\end{equation}
with initial condition $G_{m}^{\alpha,\nu}(x,0)=x^{m}$.

\begin{theorem}
For $h^{\alpha,\nu}_{1,1}(0)=1$, the expectation of time-changed LBDPC is given by
\begin{equation}\label{expt}
	\mathbb{E}(N^{\alpha,\nu}(t))=E_{\alpha}((\lambda-\mu-\nu)t^{\alpha}).
\end{equation}
\end{theorem}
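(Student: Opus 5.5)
The plan is to obtain a fractional ODE for the mean by differentiating the generating-function equation \eqref{pgfDE} in $x$ and then setting $x=1$, and to solve that ODE with the Caputo Laplace rule \eqref{caputolp} and the Mittag-Leffler transform \eqref{ltm12}. Throughout, the initial condition $h^{\alpha,\nu}_{1,1}(0)=1$ means $m=1$, so $G_{1}^{\alpha,\nu}(x,0)=x$ and $\mathbb{E}(N^{\alpha,\nu}(0))=1$.

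First, write $M(t)=\mathbb{E}(N^{\alpha,\nu}(t))=\partial_x G_{1}^{\alpha,\nu}(x,t)|_{x=1}$. Differentiate \eqref{pgfDE} once with respect to $x$:
\begin{equation*}
\frac{\partial^{\alpha}}{\partial t^{\alpha}}\partial_xG=\partial_x^2G\,(\lambda x-\mu)(x-1)+\partial_xG\,\big(\lambda(x-1)+(\lambda x-\mu)\big)-\nu\,\partial_xG .
\end{equation*}
At $x=1$ the first term on the right drops out because of the factor $(x-1)$. This requires $\partial_x^2G$ to stay finite as $x\to1^-$, which holds if the second factorial moment is finite; one can also argue via the limit $x\to1^-$. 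What remains is
\begin{equation*}
\frac{\mathrm{d}^{\alpha}}{\mathrm{d}t^{\alpha}}M(t)=(\lambda-\mu-\nu)M(t),\qquad M(0)=1 .
\end{equation*}

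Next, apply the Laplace transform with \eqref{caputolp}. This gives $z^{\alpha}\tilde M(z)-z^{\alpha-1}=(\lambda-\mu-\nu)\tilde M(z)$, so
\begin{equation*}
\tilde M(z)=\frac{z^{\alpha-1}}{z^{\alpha}-(\lambda-\mu-\nu)} .
\end{equation*}
Inverting with \eqref{ltm12}, taking $\beta=\gamma=1$ and $\omega=\lambda-\mu-\nu$, and with $z$ large enough, yields $M(t)=E_{\alpha}((\lambda-\mu-\nu)t^{\alpha})$, which is \eqref{expt}.

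An alternative route that avoids the regularity issue uses the subordination formula \eqref{pmn}. Solve the classical ($\alpha=1$) mean ODE $M'=(\lambda-\mu-\nu)M$ to get $e^{(\lambda-\mu-\nu)y}$, then average over $Y_\alpha(t)$. By \eqref{ltLnu12} this has Laplace transform $z^{\alpha-1}/(z^{\alpha}-(\lambda-\mu-\nu))$, the same as above.

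The main obstacle is justifying the interchange of the Caputo derivative with $\partial_x$ and with the evaluation at $x=1$, together with finiteness of the moments. I would handle this via the subordination route, where exchanging the integral and the expectation is justified by Tonelli, since the integrands are nonnegative.
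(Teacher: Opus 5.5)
Your proposal is correct and follows essentially the same route as the paper: differentiate \eqref{pgfDE} once in $x$, set $x=1$ to get $\frac{\mathrm{d}^{\alpha}}{\mathrm{d}t^{\alpha}}M=(\lambda-\mu-\nu)M$ with $M(0)=1$, and solve this using \eqref{caputolp} and \eqref{ltm12}. Your subordination argument (averaging $e^{(\lambda-\mu-\nu)y}$ against the law of $Y_{\alpha}(t)$ via \eqref{ltLnu12}) is a worthwhile addition, since it sidesteps the interchange-of-limits and moment-finiteness issues that the paper does not address.
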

\begin{proof}
Note that
\begin{equation}\label{exppgf}
	\mathbb{E}(N^{\alpha,\nu}(t))=\frac{\partial}{\partial x}G_{1}^{\alpha,\nu}(x,t)\bigg|_{x=1}.
\end{equation}
On taking the first order derivative of \eqref{pgfDE} with respect to variable $x$ and using \eqref{exppgf}, we obtain
\begin{equation}\label{expDE}
	\frac{\mathrm{d}^{\alpha}}{\mathrm{d} t^{\alpha}}\mathbb{E}(N^{\alpha,\nu}(t))=(\lambda-\mu-\nu)\mathbb{E}(N^{\alpha,\nu}(t))
\end{equation}
with initial condition $\mathbb{E}(N^{\alpha,\nu}(0))=1$. By solving \eqref{expDE}, the required result is obtained.
\end{proof}

\begin{remark}
On substituting $\nu=0$ in \eqref{expt}, the expectation of time-changed LBDPC reduces to that of time-changed LBDP (see Orsingher and Polito (2011), Eq. (4.3)).
\end{remark}

\begin{figure}
	\centering
	\includegraphics[width=1\textwidth]{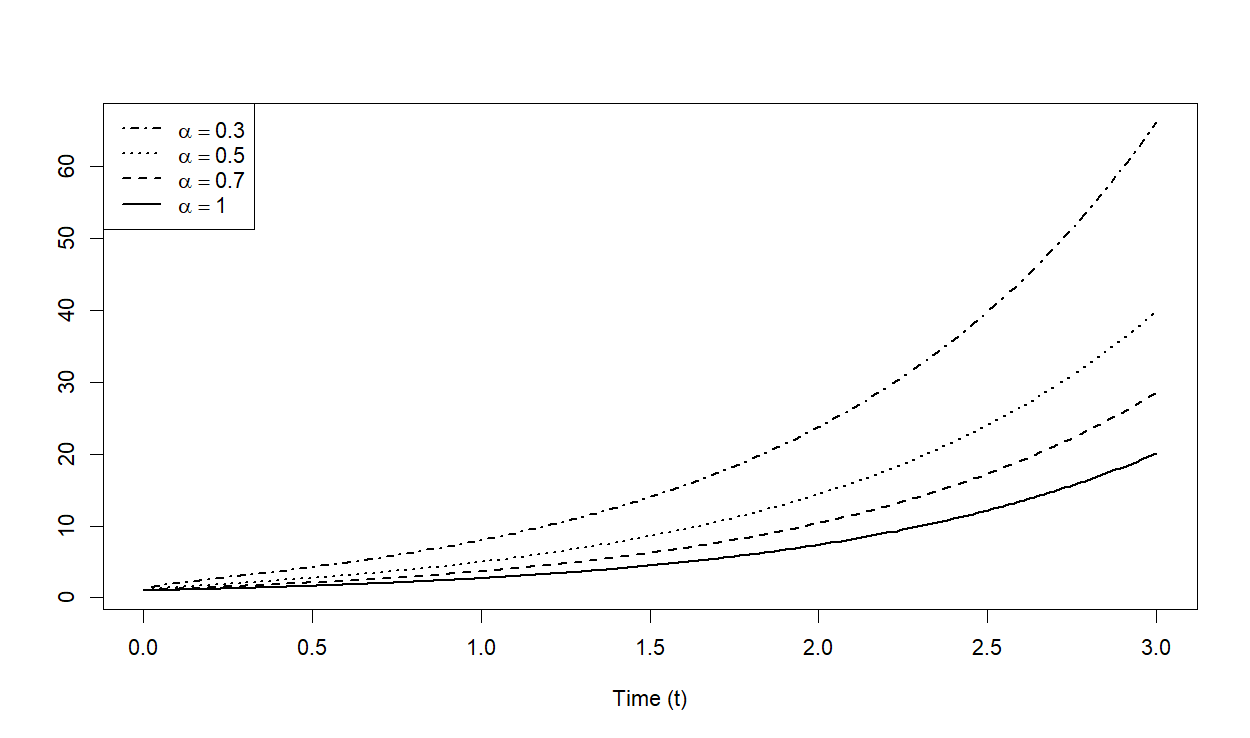}
	\caption{Plots of $\mathbb{E}(N^{\alpha,\nu}(t))$ for $\lambda=3$, $\mu=1$, $\nu=1$, and for different values of $\alpha$.}
	\label{fig2} 
\end{figure}

\begin{figure}
	\centering
	\includegraphics[width=1\textwidth]{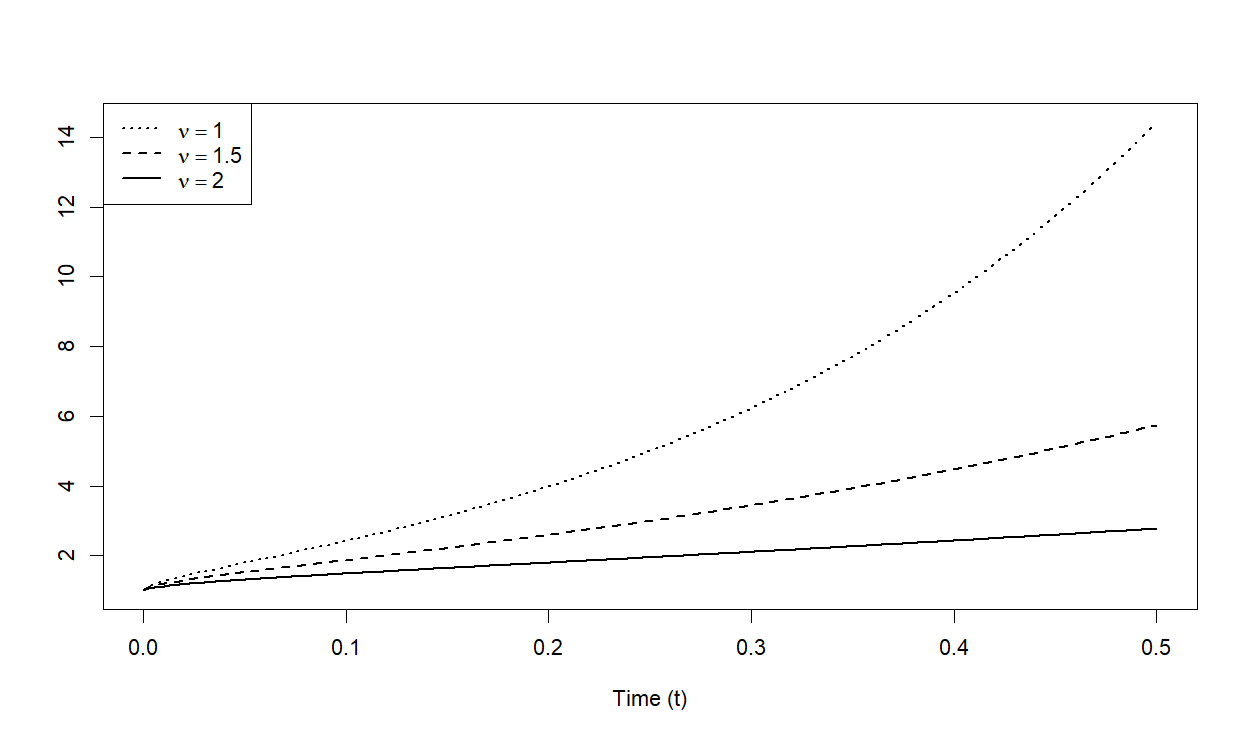}
	\caption{Plots of $\mathbb{E}(N^{\alpha,\nu}(t))$ for $\alpha=0.5$, $\lambda=4$, $\mu=1$, and for different values of $\nu$.}
	\label{fig3} 
\end{figure}

In Figure \ref{fig2}, the plot for expected population in time-changed LBDPC for different values of $\alpha$ is given. It is observed that the average growth in time-changed LBDPC increases as $\alpha$ decreases. In Figure \ref{fig3}, the plot for expected population in time-changed LBDPC for different values of $\nu$ is given. It is observed that the average growth in time-changed LBDPC increases with decrease in the catastrophe rate $\nu$.

\begin{theorem}\label{thm4.6}
For $\lambda\neq\mu$ and $h^{\alpha,\nu}_{1,1}(0)=1$, the variance of time-changed LBDPC is given by
{\small\begin{equation*}
	Var(N^{\alpha,\nu}(t))=\frac{2\lambda}{\lambda-\mu}E_{\alpha}((2\lambda-2\mu-\nu)t^{\alpha})-\Big(\frac{\lambda+\mu}{\lambda-\mu}\Big)E_{\alpha}((\lambda-\mu-\nu)t^{\alpha})-\big(E_{\alpha}((\lambda-\mu-\nu)t^{\alpha})\big)^{2}.
\end{equation*}}
\end{theorem}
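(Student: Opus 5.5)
The plan is to use the probability generating function equation \eqref{pgfDE} to derive a fractional differential equation for the second factorial moment $M(t)=\mathbb{E}\big(N^{\alpha,\nu}(t)(N^{\alpha,\nu}(t)-1)\big)=\frac{\partial^{2}}{\partial x^{2}}G_{1}^{\alpha,\nu}(x,t)\big|_{x=1}$. I would solve it by Laplace transform and then combine it with \eqref{expt}.

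\emph{Step 1: the moment equation.} Differentiate \eqref{pgfDE} twice with respect to $x$, interchanging the Caputo derivative in $t$ with $\partial_x$ as was done for \eqref{expDE}. Write $G=G_{1}^{\alpha,\nu}$. The right-hand side's second $x$-derivative is
\begin{equation*}
\frac{\partial^{3}G}{\partial x^{3}}(\lambda x-\mu)(x-1)+2\frac{\partial^{2}G}{\partial x^{2}}(2\lambda x-\lambda-\mu)+2\lambda\frac{\partial G}{\partial x}-\nu\frac{\partial^{2}G}{\partial x^{2}}.
\end{equation*}
At $x=1$ the first term vanishes, which gives
\begin{equation*}
\frac{\mathrm{d}^{\alpha}}{\mathrm{d}t^{\alpha}}M(t)=(2\lambda-2\mu-\nu)M(t)+2\lambda\,\mathbb{E}(N^{\alpha,\nu}(t)),\qquad M(0)=0.
\end{equation*}
The initial condition holds because $G(x,0)=x$.

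\emph{Step 2: solve by Laplace transform.} Set $a=2\lambda-2\mu-\nu$ and $b=\lambda-\mu-\nu$. Apply \eqref{caputolp}, and use \eqref{ltm12} to get $\mathbb{L}(E_\alpha(bt^\alpha))(z)=z^{\alpha-1}/(z^\alpha-b)$. This gives
\begin{equation*}
\tilde M(z)=\frac{2\lambda z^{\alpha-1}}{(z^{\alpha}-a)(z^{\alpha}-b)}.
\end{equation*}
Since $a-b=\lambda-\mu\neq0$, which is exactly where the hypothesis $\lambda\neq\mu$ is used, partial fractions give
\begin{equation*}
\tilde M(z)=\frac{2\lambda}{\lambda-\mu}\Big(\frac{z^{\alpha-1}}{z^{\alpha}-a}-\frac{z^{\alpha-1}}{z^{\alpha}-b}\Big).
\end{equation*}
Inverting with \eqref{ltm12} yields $M(t)=\frac{2\lambda}{\lambda-\mu}\big(E_\alpha(at^\alpha)-E_\alpha(bt^\alpha)\big)$.

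\emph{Step 3: assemble the variance.} Use $Var=M+\mathbb{E}(N)-(\mathbb{E}(N))^2$ with \eqref{expt}. The coefficient of $E_\alpha(bt^\alpha)$ is $-\frac{2\lambda}{\lambda-\mu}+1=-\frac{\lambda+\mu}{\lambda-\mu}$, which gives exactly the stated formula.

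The only delicate point is Step 1: justifying the termwise differentiation of the series in $x$ and its interchange with the Caputo derivative. I would handle it the same way the expectation theorem is handled, at the formal level the paper adopts there. The rest is routine algebra.
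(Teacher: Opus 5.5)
Your proposal is correct and follows the paper's proof essentially step for step. The paper also differentiates \eqref{pgfDE} twice at $x=1$ to get the fractional equation for the second factorial moment $u_2(t)$ with $u_2(0)=0$, solves it by Laplace transform with the same partial-fraction split (where $\lambda\neq\mu$ is needed), inverts via \eqref{ltm12}, and then assembles the variance using \eqref{expt}.
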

\begin{proof}
Let us denote $u_{2}(t)$ as the following:
\begin{equation}\label{u2pgf}
	u_{2}(t)=\mathbb{E}\big(N^{\alpha,\nu}(t)(N^{\alpha,\nu}(t)-1)\big)=\frac{\partial^{2}}{\partial x^{2}}G_{1}^{\alpha,\nu}(x,t)\bigg|_{x=1}.
\end{equation}
On taking the second order derivative of \eqref{pgfDE} with respect to variable $x$, and using \eqref{exppgf} and \eqref{u2pgf}, we get
\begin{equation}\label{u2DE}
	\frac{\mathrm{d}^{\alpha}}{\mathrm{d}t^{\alpha}}u_{2}(t)=(2\lambda-2\mu-\nu)u_{2}(t)+2\lambda\mathbb{E}(N^{\alpha,\nu}(t))
\end{equation}
with initial condition $u_{2}(0)=0$. By taking the Laplace transform of \eqref{u2DE} and using \eqref{caputolp}, we obtain
\begin{align}\label{u2z}
	\tilde{u}_{2}(z)&=\frac{2\lambda z^{\alpha-1}}{(z^\alpha-2\lambda+2\mu+\nu)(z^\alpha-\lambda+\mu+\nu)}\nonumber\\
	&=\frac{2\lambda}{\lambda-\mu}\Big(\frac{z^{\alpha-1}}{z^{\alpha}-2\lambda+2\mu+\nu}-\frac{z^{\alpha-1}}{z^{\alpha}-\lambda+\mu+\nu}\Big),\,\lambda\neq\mu.
\end{align}
On taking the inverse Laplace transform of \eqref{u2z} and by using \eqref{ltm12}, we have
\begin{equation}\label{u2t}
	u_{2}(t)=\frac{2\lambda}{\lambda-\mu}\big(E_{\alpha}((2\lambda-2\mu-\nu)t^{\alpha})-E_{\alpha}((\lambda-\mu-\nu)t^{\alpha})\big),\,\lambda\neq\mu.
\end{equation}
By using \eqref{u2t}, we get the second moment of $\{N^{\alpha,\nu}(t)\}_{t\geq0}$ in the following form:
\begin{equation}\label{2ndmomentN}
	\mathbb{E}((N^{\alpha,\nu}(t))^{2})=\frac{2\lambda}{\lambda-\mu}E_{\alpha}((2\lambda-2\mu-\nu)t^{\alpha})-\Big(\frac{\lambda+\mu}{\lambda-\mu}\Big)E_{\alpha}((\lambda-\mu-\nu)t^{\alpha}),\,\lambda\neq\mu.
\end{equation}
On using \eqref{expt} and \eqref{2ndmomentN}, we get the required result.
\end{proof}

\begin{remark}
	On substituting $\nu=0$ in Theorem \ref{thm4.6}, the variance of time-changed LBDPC reduces to that of time-changed LBDP (see Orsingher and Polito (2011), Eq. (4.11)) for the case $\lambda\neq\mu$.
\end{remark}

\begin{remark}
For $\lambda=\mu$, the Laplace transform of $u_{2}(t)$ is
\begin{equation*}
	\tilde{u}_{2}(z)=\frac{2\lambda z^{\alpha-1}}{(z^{\alpha}+\nu)^{2}},
\end{equation*}
which on taking the inverse Laplace transform yields 
\begin{equation}\label{u2=}
	u_{2}(t)=2\lambda t^{\alpha}E_{\alpha,\alpha+1}^{2}(-\nu t^{\alpha}),
\end{equation}
where we have used \eqref{ltm12}. Now, by using \eqref{expt}, \eqref{u2pgf} and \eqref{u2=}, the variance of time-changed LBDPC is given by
\begin{equation*}
	Var(N^{\alpha,\nu}(t))=2\lambda t^{\alpha}E_{\alpha,\alpha+1}^{2}(-\nu t^{\alpha})+E_{\alpha}(-\nu t^{\alpha})-\big(E_{\alpha}(-\nu t^{\alpha})\big)^{2},
\end{equation*}
which reduces to the variance of time-changed LBDP on substituting $\nu=0$ (see Orsingher and Polito (2011), Eq. (4.13)).
\end{remark}

Some numerical examples for the expectation and variance of $N^{\alpha,\nu}(1)$ are given in Table \ref{tab1}.

\begin{table}[h]
	\centering
	\begin{tabular}{c c c c c}
		\hline
		$\lambda$ & $\mu$ & $\nu$ & $\mathbb{E}(N^{\alpha,\nu}(1))$ & $\mathrm{Var}(N^{\alpha,\nu}(1))$ \\ 
		\hline
		0.5 & 0.3 & 0.2 & 1.0000 & \ \ 1.3630 \\ 
		1.0 & 0.1 & 0.3 & 2.2989 & 33.3586 \\ 
		1.5 & 1.1 & 1.2 & 0.4891 & \ \ 1.6125 \\
		2.0 & 1.6 & 1.7 & 0.3576 & \ \ 1.2186 \\
		2.5 & 2.1 & 2.2 & 0.2786 & \ \ 0.9533 \\
		3.0 & 2.0 & 1.5 & 0.6157 & \ \ 8.2566 \\
		3.5 & 2.5 & 1.3 & 0.7346 & 14.2237 \\
		4.0 & 2.7 & 2.6 & 0.3576 & \ \ 4.1827 \\
		4.5 & 2.8 & 2.9 & 0.3785 & \ \ 8.5673 \\
		5.0 & 2.6 & 3.5 & 0.4017 & 42.2386 \\
		\hline
	\end{tabular}
	\vspace{0.5cm}
	\caption{Numerical examples of expectation and variance of time-changed LBDPC.}
	\label{tab1}
\end{table}


\subsection{Sample path simulation of time-changed LBDPC}
We give the sample paths simulation of time-changed LBDPC by using the modified Gillespie algorithm which was introduced by Cahoy {\it et al.} (2015) and Ascione {\it et al.} (2020). From Theorem \ref{thmSojourn}, the sojourn time at non-zero state $m$ is Mittag-Leffler distributed with parameters $\alpha$ and $m\lambda+m\mu+\nu$. We use Corollary 7.3 of Asione {\it et al.} (2020) to simulate the Mittag-Leffler distributed sojourn times in time-changed LBDPC, where zero is the absorbing state. 
The algorithm to simulate time-changed LBDPC is given as follows:

\noindent Step 1: Initialize the process by fixing $N^{\alpha,\nu}(0)=m\geq1$ and $Y_{0}=0$.

\noindent Step 2: Suppose a process has been simulated till the $k$th event that happens in time $Y_{k}$ and current state is $N^{\alpha,\nu}(Y_{k})=n$. If $n\geq1$ then simulate random variable $I_{n}^{\alpha}\overset{d}{=}X_{n}^{1/\alpha}T_{\alpha}$. Here, $X_{n}$ is an exponential random variable with rate $n\lambda+n\mu+\nu$ and $T_{\alpha}\overset{d}{=}D_{\alpha}(1)$ such that $X_{n}$ and $D_{\alpha}(1)$ are independent.

\noindent Step 3: If $n=0$ then fix $N^{\alpha,\nu}(t)=0$ for all $t>Y_{k}$ else get the next event time as $Y_{k+1}=Y_{k}+I_n$.

\noindent Step 4: If $n\geq1$ then simulate the random variable $U\overset{d}{=}Uni(0,1)$. If $U<\frac{n\lambda}{n\lambda+n\mu+\nu}$, then set $N^{\alpha,\nu}(Y_{k+1})=n+1$, if $U<\frac{n\lambda+n\mu}{n\lambda+n\mu+\nu}$, then set $N^{\alpha,\nu}(Y_{k+1})=n-1$ otherwise $N^{\alpha,\nu}(Y_{k+1})=0$. 

\noindent Step 5: Obtain the whole process by setting $N^{\alpha,\nu}(t)=N^{\alpha,\nu}(Y_{k})$ for all $t\in[Y_{k},Y_{k+1})$.

\begin{figure}
	\centering
	\includegraphics[width=1\textwidth]{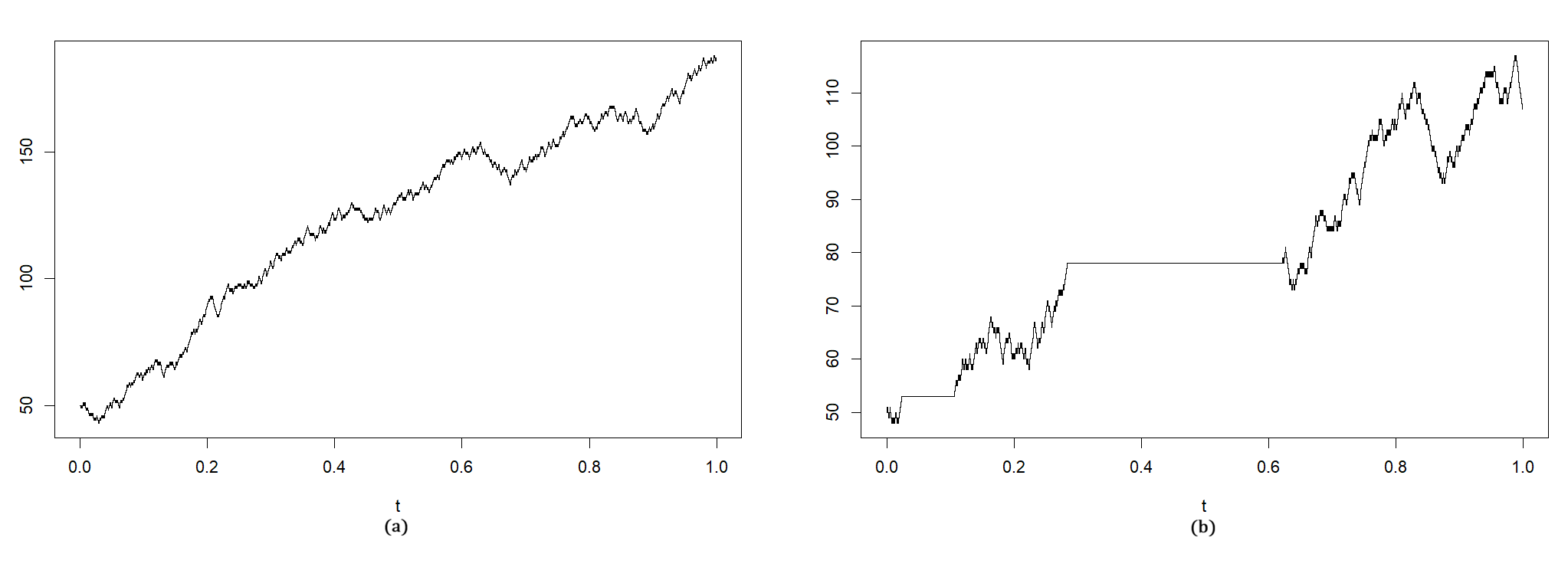}
	\caption{Sample path simulation of time-changed LBDPC. Plot (a) is for parameters $\alpha=1$, $\lambda=15$, $\mu=11$ and $\nu=2$ and Plot (b) is for parameters $\alpha=0.5$, $\lambda=15$, $\mu=11$ and $\nu=2$.}
	\label{fig4} 
\end{figure}

\begin{figure}
	\centering
	\includegraphics[width=1\textwidth]{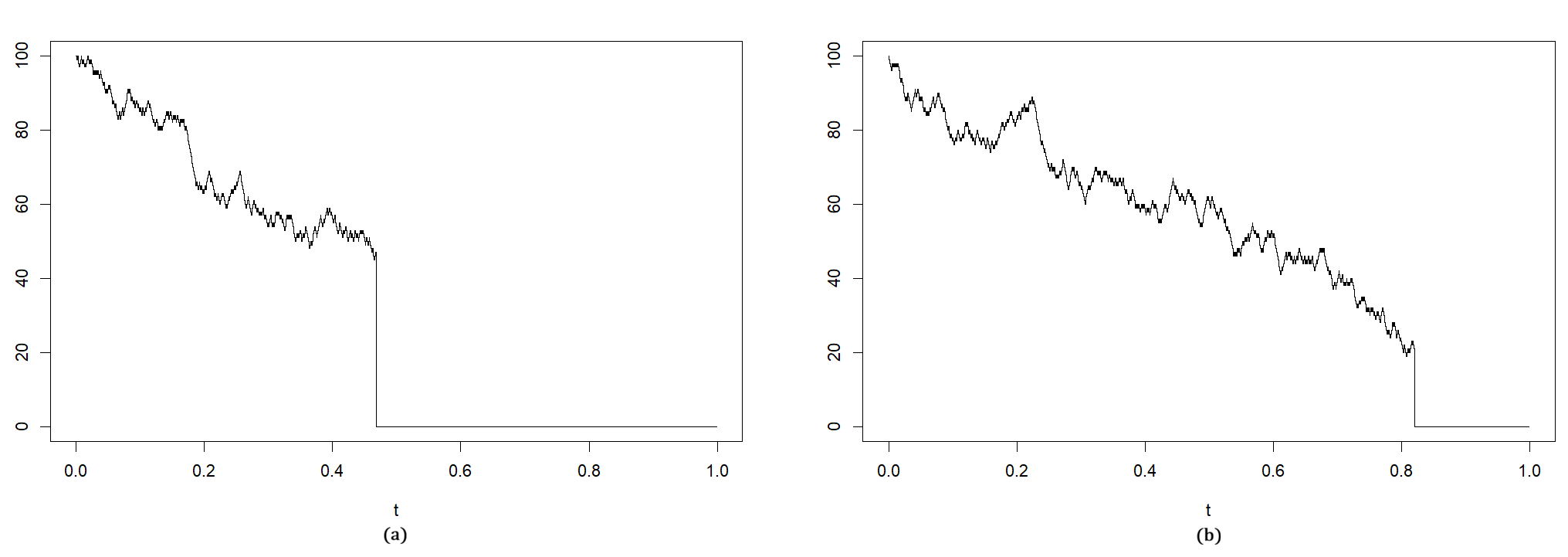}
	\caption{Sample path simulation of time-changed LBDPC. Plot (a) is for parameters $\alpha=0.3$, $\lambda=10$, $\mu=12$ and $\nu=3$ and Plot (b) is for parameters $\alpha=0.8$, $\lambda=10$, $\mu=12$ and $\nu=3$.}
	\label{fig5} 
\end{figure}

\section{Tempered birth-death process with catastrophe}\label{SecTempBDPC}
In this section, we introduce and study the tempered birth-death process with catastrophes, in short, the tempered BDPC. It is obtained by time-changing BDPC with the first hitting time of tempered stable subordinator.
 
Let $\{\mathcal{N}^{\nu}(t)\}_{t\geq0}$ be a BDPC with state space $I=\{0,1,2,\dots\}$. Here, $\lambda_{n}$'s for $n=1,2,\dots$ are state dependent positive birth rates with $\lambda_{0}\geq0$, $\mu_{n}$'s for $n=1,2,3,\dots$ are state dependent positive death rates and $\nu$ is the rate of catastrophe at any state. Let  $\{D_{\theta,\alpha}(t)\}_{t\geq0}$ be a tempered stable subordinator with $\theta>0$ being its tempering parameter and $0<\alpha<1$ being its stability index, for definition see Section \ref{SubSecSubordinator}. Also, let $\{Y_{\theta,\alpha}(t)\}_{t\geq0}$ with $Y_{\theta,\alpha}(0)=0$ be its first hitting time such that it is independent of the BDPC $\{\mathcal{N}^{\nu}(t)\}_{t\geq0}$. Then, we define the tempered BDPC $\{\mathscr{N}^{\theta,\alpha,\nu}(t)\}_{t\geq0}$, $0<\alpha\leq1$, $\nu\geq0$ as the following time-changed process: 
\begin{equation*}
	\mathscr{N}^{\theta,\alpha,\nu}(t)\coloneqq\mathcal{N^{\nu}}(Y_{\theta,\alpha}(t)),\,\theta>0,\,0<\alpha<1,
\end{equation*}
with $\mathscr{N}^{0,1,\nu}(t)=\mathcal{N}^{\nu}(t)$, $t\geq0$. 

Similarly, the tempered birth-death process without catastrophe, that is, the tempered BDP $\{\mathscr{N}^{\theta,\alpha}(t)\}_{t\geq0}$, $\theta>0$, $0<\alpha\leq1$ is defined as
\begin{equation*}
	\mathscr{N}^{\theta,\alpha}(t)\coloneqq\mathcal{N}(Y_{\theta,\alpha}(t)),\,\theta>0,\,0<\alpha<1,
\end{equation*}
with $\mathscr{N}^{0,1}(t)=\mathcal{N}(t)$, $t\geq0$. 

For $\nu=0$, the tempered BDPC reduces to the tempered BDP.

For $m,n\in I$, let us denote the state probabilities of $\{\mathscr{N}^{\theta,\alpha,\nu}(t)\}_{t\geq0}$ and $\{\mathscr{N}^{\theta,\alpha}(t)\}_{t\geq0}$ by
\begin{equation*}
	\mathrm{p}_{m,n}^{\theta,\alpha,\nu}(t)=\mathrm{Pr}\{ \mathscr{N}^{\theta,\alpha,\nu}(t)=n|\mathscr{N}^{\theta,\alpha,\nu}(0)=m\}
\end{equation*}
and
\begin{equation*}
	\mathrm{p}_{m,n}^{\theta,\alpha}(t)=\mathrm{Pr}\{\mathscr{ N}^{\theta,\alpha}(t)=n|\mathscr{N}^{\theta,\alpha}(0)=m\},
\end{equation*} 
respectively.

\begin{theorem}\label{thm5.1}
	The governing system of differential equations for the state probabilities of tempered BDPC is given by
	\begin{align*}\label{DE8}
		\frac{\mathrm{d}^{\theta,\alpha}}{\mathrm{d}t^{\theta,\alpha}} \mathrm{p}_{m,0}^{\theta,\alpha,\nu}(t)&=-(\lambda_{0}+\nu)\mathrm{p}_{m,0}^{\theta,\alpha,\nu}(t)+\mu_{1}\mathrm{p}_{m,1}^{\theta,\alpha,\nu}(t)+\nu,\\
		\frac{\mathrm{d}^{\theta,\alpha}}{\mathrm{d}t^{\theta,\alpha}}\mathrm{p}_{m,n}^{\theta,\alpha,\nu}(t)&=-(\lambda_{n}+\mu_{n}+\nu)\mathrm{p}_{m,n}^{\theta,\alpha,\nu}(t)+\lambda_{n-1}\mathrm{p}_{m,n-1}^{\theta,\alpha,\nu}(t)+\mu_{n+1}\mathrm{p}_{m,n+1}^{\theta,\alpha,\nu}(t),\,n=1,2,\dots
	\end{align*}
	with initial condition $\mathrm{p}_{m,n}^{\theta,\alpha,\nu}(0)=\delta_{m,n}$.
	Here, $\frac{\mathrm{d}^{\theta,\alpha}}{\mathrm{d}t^{\theta,\alpha}}$ is the Caputo tempered fractional derivative as defined in Section \ref{SubSecFractionalDerivative}. 
\end{theorem}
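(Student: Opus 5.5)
We mirror the proof of Theorem \ref{thm1}, replacing the inverse stable subordinator by the inverse tempered stable subordinator. The device is the same: a Laplace-domain identity linking $\mathrm{p}^{\theta,\alpha,\nu}_{m,n}$ to $p^{\nu}_{m,n}$, then inversion through the transform of the Caputo tempered derivative.

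First, conditioning on $Y_{\theta,\alpha}(t)$ and using its independence from $\{\mathcal{N}^{\nu}(t)\}_{t\geq0}$ gives
\begin{equation*}
\mathrm{p}_{m,n}^{\theta,\alpha,\nu}(t)=\int_{0}^{\infty}p_{m,n}^{\nu}(y)\mathrm{Pr}\{Y_{\theta,\alpha}(t)\in\mathrm{d}y\}.
\end{equation*}
Taking Laplace transforms and applying \eqref{temperedPDFLT} with Fubini (the integrand is nonnegative and bounded by one) yields
\begin{equation*}
\tilde{\mathrm{p}}_{m,n}^{\theta,\alpha,\nu}(z)=\frac{\phi_{\theta,\alpha}(z)}{z}\,\tilde{p}_{m,n}^{\nu}(\phi_{\theta,\alpha}(z)),\quad z>0.
\end{equation*}
Since $\phi_{\theta,\alpha}(z)>0$ for $z>0$, we may evaluate the transformed Kolmogorov system \eqref{ltDE1} at the point $\phi_{\theta,\alpha}(z)$. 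Multiplying through by $\phi_{\theta,\alpha}(z)/z$ and using $\mathrm{p}_{m,n}^{\theta,\alpha,\nu}(0)=p^{\nu}_{m,n}(0)=\delta_{m,n}$ (because $Y_{\theta,\alpha}(0)=0$), we obtain for $n\ge 1$
\begin{equation*}
\phi_{\theta,\alpha}(z)\tilde{\mathrm{p}}_{m,n}^{\theta,\alpha,\nu}(z)-\frac{\phi_{\theta,\alpha}(z)}{z}\mathrm{p}_{m,n}^{\theta,\alpha,\nu}(0)=-(\lambda_{n}+\mu_{n}+\nu)\tilde{\mathrm{p}}_{m,n}^{\theta,\alpha,\nu}(z)+\lambda_{n-1}\tilde{\mathrm{p}}_{m,n-1}^{\theta,\alpha,\nu}(z)+\mu_{n+1}\tilde{\mathrm{p}}_{m,n+1}^{\theta,\alpha,\nu}(z).
\end{equation*}
The $n=0$ equation is the analogue, carrying the extra inhomogeneous term.

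The one step needing care is that inhomogeneous term. In \eqref{ltDE1} the constant $\nu$ contributes $\nu/\phi_{\theta,\alpha}(z)$ at the point $\phi_{\theta,\alpha}(z)$. After multiplication by $\phi_{\theta,\alpha}(z)/z$ it becomes exactly $\nu/z$, which is the Laplace transform of the constant $\nu$. This is the same cancellation that occurred with $z^{\alpha-1}$ and $z^{\alpha}$ in \eqref{ltDE2}.

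By \eqref{CaputotemperedLT}, the left-hand sides are the Laplace transforms of $\frac{\mathrm{d}^{\theta,\alpha}}{\mathrm{d}t^{\theta,\alpha}}\mathrm{p}_{m,n}^{\theta,\alpha,\nu}(t)$. Uniqueness of the Laplace transform then gives the stated system with initial condition $\delta_{m,n}$.
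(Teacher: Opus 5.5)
Your proposal is correct and follows the paper's proof essentially step for step. Both arguments use the subordination integral, take the Laplace transform via \eqref{temperedPDFLT}, evaluate \eqref{ltDE1} at $\phi_{\theta,\alpha}(z)$ and multiply by $\phi_{\theta,\alpha}(z)/z$, and then invert using \eqref{CaputotemperedLT}; your explicit check that the constant $\nu$ becomes $\nu/z$ is a detail the paper leaves implicit.
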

\begin{proof}
For $m,n\in I$ and $t\geq0$, we have
\begin{equation}\label{pmntemp}
	\mathrm{p}_{m,n}^{\theta,\alpha,\nu}(t)=\mathrm{Pr}\{\mathcal{N}^{\nu}(Y_{\theta,\alpha}(t))=n|\mathcal{N}^{\nu}(0)=m\}
	=\int_{0}^{\infty}p_{m,n}^{\nu}(y)\mathrm{Pr}\{Y_{\theta,\alpha}(t)\in\mathrm{d}y\}.
\end{equation}
On taking the Laplace transform of \eqref{pmntemp} and by using \eqref{temperedPDFLT}, we obtain
\begin{align}\label{pmnztemp}
	\tilde{\mathrm{p}}_{m,n}^{\theta,\alpha,\nu}(z)&=\frac{\phi_{\theta,\alpha}(z)}{z}\int_{0}^{\infty}p_{m,n}^{\nu}(y)e^{-y\phi_{\theta,\alpha}(z)}\mathrm{d}y
	=\frac{\phi_{\theta,\alpha}(z)}{z}\tilde{p}_{m,n}^{\nu}(\phi_{\theta,\alpha}(z)),\, z>0.
\end{align}
By using $\mathrm{p}_{m,n}^{\theta,\alpha,\nu}(0)=p_{m,n}^{\nu}(0)$ and \eqref{pmnztemp} in \eqref{ltDE1}, we get
{\small\begin{align*}\label{ltDE2temp}
	\phi_{\theta,\alpha}(z)\tilde{\mathrm{p}}_{m,0}^{\theta,\alpha,\nu}(z)-\frac{\phi_{\theta,\alpha}(z)}{z}\mathrm{p}_{m,0}^{\theta,\alpha,\nu}(0)&=-(\lambda_{0}+\nu)\tilde{\mathrm{p}}_{m,0}^{\theta,\alpha,\nu}(z)+\mu_{1}\tilde{\mathrm{p}}_{m,1}^{\theta,\alpha,\nu}(z)+\frac{\nu}{z},\\
	\phi_{\theta,\alpha}(z)\tilde{\mathrm{p}}_{m,n}^{\theta,\alpha,\nu}(z)-\frac{\phi_{\theta,\alpha}(z)}{z}\mathrm{p}_{m,n}^{\theta,\alpha,\nu}(0)&=-(\lambda_{n}+\mu_{n}+\nu)\tilde{\mathrm{p}}_{m,n}^{\theta,\alpha,\nu}(z)+\lambda_{n-1}\tilde{\mathrm{p}}_{m,n-1}^{\theta,\alpha,\nu}(z)+\mu_{n+1}\tilde{\mathrm{p}}_{m,n+1}^{\theta,\alpha,\nu}(z)
\end{align*}}
whose inverse Laplace transform yields the required result by using \eqref{CaputotemperedLT}.
\end{proof}


\begin{proposition}\label{thm5.2}
The following relationship holds between the Laplace transforms of state probabilities of tempered BDPC and tempered BDP:
\begin{equation*}
	\tilde{\mathrm{p}}_{m,n}^{\theta,\alpha,\nu}(z)=\psi_{\theta,\alpha}^{\nu}(z)\Big(\frac{\phi_{\theta,\alpha}(z)}{z}\tilde{\mathrm{p}}_{m,n}^{\theta,\alpha}\big(((z+\theta)^{\alpha}+\nu)^{1/\alpha}-\theta\big)+\frac{\nu}{z}\tilde{\mathrm{p}}_{0,n}^{\theta,\alpha}\big(((z+\theta)^{\alpha}+\nu)^{1/\alpha}-\theta\big)\Big),
\end{equation*}
where 
\begin{equation}\label{psiEq}
	\psi_{\theta,\alpha}^{\nu}(z)=\frac{((z+\theta)^{\alpha}+\nu)^{1/\alpha}-\theta}{(z+\theta)^{\alpha}+\nu-\theta^{\alpha}}.
\end{equation}
\end{proposition}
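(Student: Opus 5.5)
The plan mirrors the proof of Proposition \ref{prop1}. I start from \eqref{pmnztemp}, which gives $\tilde{\mathrm{p}}_{m,n}^{\theta,\alpha,\nu}(z)=\frac{\phi_{\theta,\alpha}(z)}{z}\tilde{p}_{m,n}^{\nu}(\phi_{\theta,\alpha}(z))$. Next I express the Laplace transform of the BDPC state probabilities through those of the BDP, using Eq. (2.2) of Pakes (1997), exactly as in \eqref{pmnzzz}. The integral computation there does not depend on the particular argument, so it gives, for any $s>0$,
\begin{equation*}
\tilde{p}^{\nu}_{m,n}(s)=\tilde{p}_{m,n}(\nu+s)+\frac{\nu}{s}\tilde{p}_{0,n}(\nu+s).
\end{equation*}
Taking $s=\phi_{\theta,\alpha}(z)$ yields
\begin{equation*}
\tilde{\mathrm{p}}_{m,n}^{\theta,\alpha,\nu}(z)=\frac{\phi_{\theta,\alpha}(z)}{z}\tilde{p}_{m,n}(\nu+\phi_{\theta,\alpha}(z))+\frac{\nu}{z}\tilde{p}_{0,n}(\nu+\phi_{\theta,\alpha}(z)).
\end{equation*}

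Second, I convert the BDP transforms into tempered BDP transforms. Applying the argument of \eqref{pmnztemp} with $\nu=0$ gives $\tilde{\mathrm{p}}_{m,n}^{\theta,\alpha}(w)=\frac{\phi_{\theta,\alpha}(w)}{w}\tilde{p}_{m,n}(\phi_{\theta,\alpha}(w))$. I then want $w$ with $\phi_{\theta,\alpha}(w)=\nu+\phi_{\theta,\alpha}(z)$, that is, $(w+\theta)^{\alpha}=(z+\theta)^{\alpha}+\nu$. Solving gives $w=((z+\theta)^{\alpha}+\nu)^{1/\alpha}-\theta$, which is positive for $z>0$. Inverting the relation,
\begin{equation*}
\tilde{p}_{m,n}(\nu+\phi_{\theta,\alpha}(z))=\frac{w}{(z+\theta)^{\alpha}+\nu-\theta^{\alpha}}\,\tilde{\mathrm{p}}_{m,n}^{\theta,\alpha}(w)=\psi^{\nu}_{\theta,\alpha}(z)\,\tilde{\mathrm{p}}_{m,n}^{\theta,\alpha}(w).
\end{equation*}
The same identity holds with $m$ replaced by $0$. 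Substituting both into the previous display and factoring out $\psi^{\nu}_{\theta,\alpha}(z)$ gives the claimed formula, with $\psi^{\nu}_{\theta,\alpha}$ as in \eqref{psiEq}.

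The only step that needs care is the change of argument in the second step. It requires identifying $w$ correctly and checking that $\phi_{\theta,\alpha}(w)=(z+\theta)^{\alpha}+\nu-\theta^{\alpha}$, which is the denominator of $\psi^{\nu}_{\theta,\alpha}$. It also requires that the Laplace transforms are evaluated in their domain of convergence, which holds since $w>0$. Everything else is the algebra already done in Proposition \ref{prop1}.
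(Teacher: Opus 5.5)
Your proposal is correct and follows essentially the same route as the paper. The paper also starts from \eqref{pmnztemp} and inserts the BDPC--BDP Laplace relation, citing Eq.~(3) of Di Crescenzo et al.\ (2008), which is the identity you re-derive from Pakes' Eq.~(2.2). It then inverts the tempered BDP relation $\tilde{p}_{m,n}(s)=\frac{(s+\theta^{\alpha})^{1/\alpha}-\theta}{s}\tilde{\mathrm{p}}^{\theta,\alpha}_{m,n}\big((s+\theta^{\alpha})^{1/\alpha}-\theta\big)$ at $s=\nu+\phi_{\theta,\alpha}(z)$, which is exactly your choice of $w$ with $\phi_{\theta,\alpha}(w)=\nu+\phi_{\theta,\alpha}(z)$.
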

\begin{proof}
For $m,n\in I$ and $z>0$, we have
\begin{equation}\label{pmntempp}
	\tilde{\mathrm{p}}_{m,n}^{\theta,\alpha,\nu}(z)=\frac{\phi_{\theta,\alpha}(z)}{z}\Big(\tilde{p}_{m,n}(\phi_{\theta,\alpha}(z)+\nu)+\frac{\nu}{\phi_{\theta,\alpha}(z)}\tilde{p}_{0,n}(\phi_{\theta,\alpha}(z)+\nu)\Big),
\end{equation}
which follows by using Eq. (3) of Di Crescenzo {\it et al.} (2008) in \eqref{pmnztemp}. 

On taking the Laplace transform of $\mathrm{p}^{\theta,\alpha}_{m,n}(t)$ and by using \eqref{temperedPDFLT}, we obtain
\begin{equation}\label{pmntempz}
	\tilde{p}_{m,n}(z)=\frac{(z+\theta^\alpha)^{1/\alpha}-\theta}{z}\tilde{\mathrm{p}}_{m,n}^{\theta,\alpha}((z+\theta^{\alpha})^{1/\alpha}-\theta).
\end{equation}
By using $\phi_{\theta,\alpha}(z)=(z+\theta)^{\alpha}-\theta^{\alpha}$ and \eqref{pmntempz} in \eqref{pmntempp}, we get the required result.
\end{proof}



\begin{theorem}
In tempered BDPC, the inter-occurrence times of catastrophes $C_{\theta,\alpha}^{\nu}$ are tempered Mittag-Leffler distributed random variables. That is,
\begin{equation*}
	\mathrm{Pr}\{C_{\theta,\alpha}^{\nu}>t\}=e^{-\theta t}\sum_{n=0}^{\infty}\sum_{r=0}^{\infty}(-\nu t^{\alpha})^{n}(\theta t)^{r}E_{\alpha,n\alpha+r+1}^{n}(\theta^{\alpha}t^{\alpha}).
\end{equation*}
\end{theorem}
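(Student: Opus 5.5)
The plan mirrors the proof of Theorem \ref{thm3.2}, with the inverse stable subordinator replaced by $\{Y_{\theta,\alpha}(t)\}_{t\geq0}$. Let $\{X(t)\}_{t\geq0}$ be the auxiliary two-state process from that proof: it starts at $k$ and jumps to the absorbing state $-1$ with rate $\nu$, so $x_k(t)=e^{-\nu t}$. Define $X^{\theta,\alpha}(t)\coloneqq X(Y_{\theta,\alpha}(t))$ with $Y_{\theta,\alpha}$ independent of $X$. The first catastrophe time $C^{\nu}_{\theta,\alpha}$ in the tempered BDPC then satisfies
\begin{equation*}
\mathrm{Pr}\{C^{\nu}_{\theta,\alpha}>t\}=\mathrm{Pr}\{X^{\theta,\alpha}(t)=k\}=\int_0^\infty e^{-\nu y}\,\mathrm{Pr}\{Y_{\theta,\alpha}(t)\in\mathrm{d}y\}.
\end{equation*}
By \eqref{temperedPDFLT}, its Laplace transform is
\begin{equation*}
\frac{\phi_{\theta,\alpha}(z)}{z\,(\phi_{\theta,\alpha}(z)+\nu)}=\frac{(z+\theta)^{\alpha}-\theta^{\alpha}}{z\,((z+\theta)^{\alpha}-\theta^{\alpha}+\nu)}.
\end{equation*}
Equivalently, we can take the Laplace transform of $\frac{\mathrm{d}}{\mathrm{d}t}x_k=-\nu x_k$ and use \eqref{CaputotemperedLT}. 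This shows $x_k^{\theta,\alpha}$ solves $\frac{\mathrm{d}^{\theta,\alpha}}{\mathrm{d}t^{\theta,\alpha}}x_k^{\theta,\alpha}=-\nu x_k^{\theta,\alpha}$ with $x_k^{\theta,\alpha}(0)=1$, which identifies the law as tempered Mittag-Leffler.

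The main work is inverting this transform into the stated double series. Write $s=z+\theta$, so the shift $e^{-\theta t}$ appears on inversion. The transform becomes
\begin{equation*}
\frac{1}{s-\theta}\Big(1-\frac{\nu}{s^{\alpha}-\theta^{\alpha}+\nu}\Big).
\end{equation*}
The cleaner route is to expand in powers of $\nu$:
\begin{equation*}
\frac{s^{\alpha}-\theta^{\alpha}}{(s-\theta)(s^{\alpha}-\theta^{\alpha}+\nu)}=\sum_{n\geq0}(-\nu)^n\frac{1}{s-\theta}\,(s^{\alpha}-\theta^{\alpha})^{-n}.
\end{equation*}
Expand $\frac{1}{s-\theta}=\sum_{r\geq0}\theta^{r}s^{-r-1}$, valid for $s>\theta$. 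Then each term reads $(-\nu)^n\theta^r s^{-r-1}(s^{\alpha}-\theta^{\alpha})^{-n}=(-\nu)^n\theta^r\, s^{\alpha n-(n\alpha+r+1)}(s^{\alpha}-\theta^{\alpha})^{-n}$. By \eqref{ltm12} with $\gamma=n$, $\beta=n\alpha+r+1$ and $\omega=\theta^\alpha$, this is the Laplace transform of $t^{n\alpha+r}E^{n}_{\alpha,n\alpha+r+1}(\theta^{\alpha}t^{\alpha})$. Undoing the shift multiplies by $e^{-\theta t}$ and yields
\begin{equation*}
e^{-\theta t}\sum_{n,r}(-\nu t^{\alpha})^n(\theta t)^rE^{n}_{\alpha,n\alpha+r+1}(\theta^{\alpha}t^{\alpha}),
\end{equation*}
which is exactly the claim.

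The $n=0$ term needs care, since $E^{0}$ should be read as $1/\Gamma(\beta)$. It gives $e^{-\theta t}\sum_r(\theta t)^r/r!=1$, consistent with the survival function starting at $1$. The other delicate point is justifying termwise inversion. For large $s$ one has $|\nu/(s^{\alpha}-\theta^{\alpha})|<1$ and $\theta/s<1$, so the double series of transforms converges absolutely there. Absolute convergence of the resulting series of entire-type functions then lets us invert term by term, with uniqueness of the Laplace transform extending the identity.

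Finally, as in Theorem \ref{thm3.2}, the time-changed process is semi-Markov and its embedded jump chain is a time-homogeneous Markov chain. Hence the times between successive catastrophes have the same law as the first one, which gives the statement for all inter-occurrence times.
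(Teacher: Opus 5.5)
Your proposal is correct and follows the paper's proof: the same auxiliary two-state process $X(Y_{\theta,\alpha}(t))$, the same transform $\frac{\phi_{\theta,\alpha}(z)}{z(\phi_{\theta,\alpha}(z)+\nu)}$ expanded geometrically in $\nu$, and the same semi-Markov argument to pass from the first catastrophe time to all inter-occurrence times. The only minor difference is in the inversion step. You expand $1/(s-\theta)=\sum_{r\geq0}\theta^{r}s^{-r-1}$ on the Laplace side and apply \eqref{ltm12} termwise, whereas the paper inverts $1/(z\,\phi_{\theta,\alpha}(z)^{n})$ to $\int_{0}^{t}e^{-\theta y}y^{n\alpha-1}E^{n}_{\alpha,n\alpha}(\theta^{\alpha}y^{\alpha})\,\mathrm{d}y$, writes $e^{-\theta y}=e^{-\theta t}e^{\theta(t-y)}$, and uses Kilbas's integral formula. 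Your route has the small advantage of handling the $n=0$ term and the termwise inversion explicitly.
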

\begin{proof}
Let $\{X(t)\}_{t\geq0}$ be the modified process as defined in the proof of Theorem \ref{thm3.2}. We define its time-changed variant $\{X^{\theta,\alpha}(t)\}_{t\geq0}$ as follows: $X^{\theta,\alpha}(t)\coloneqq X(Y_{\theta,\alpha}(t))$, $t\geq0$, where $\{Y_{\theta,\alpha}(t)\}_{t\geq0}$ is the inverse tempered stable subordinator which is independent of $\{X(t)\}_{t\geq0}$. Its state probabilities are denoted by $x^{\theta,\alpha}_{n}(t)=\mathrm{Pr}\{X^{\theta,\alpha}(t)=n|X^{\theta,\alpha}(0)=m\}$, $n\in\{-1,k\}$ whose Laplace transform is given by
\begin{equation}\label{xmztemp}
	\tilde{x}_{n}^{\theta,\alpha}(z)
	=\frac{\phi_{\theta,\alpha}(z)}{z}\int_{0}^{\infty}e^{-y\phi_{\theta,\alpha}(z)}x_{n}(y)\mathrm{d}y
	=\frac{\phi_{\theta,\alpha}(z)}{z}\tilde{x}_{n}(\phi_{\theta,\alpha}(z)),\, z>0,
\end{equation}
which is obtained by using \eqref{temperedPDFLT}.

On taking the Laplace transform of \eqref{DE3}, and by using \eqref{xmztemp} and \eqref{CaputotemperedLT}, we obtain
\begin{align}\label{DExkTemp}
	\frac{\mathrm{d}^{\theta,\alpha}}{\mathrm{d}t^{\theta,\alpha}}x_{-1}^{\theta,\alpha}(t)&=\nu x_{k}^{\theta,\alpha}(t),\nonumber\\
	\frac{\mathrm{d}^{\theta,\alpha}}{\mathrm{d}t^{\theta,\alpha}}x_{k}^{\theta,\alpha}(t)&=-\nu x_{k}^{\theta,\alpha}(t)
\end{align}
with initial condition $x_{k}^{\theta,\alpha}(0)=1$. The Laplace transform of \eqref{DExkTemp} is given by
\begin{align}\label{xkLT}
	\tilde{x}_{k}^{\theta,\alpha}(z)
	&=\frac{\phi_{\theta,\alpha}(z)}{z(\phi_{\theta,\alpha}(z)+\nu)}\nonumber\\
	&=\frac{1}{z}\Big(1+\frac{\nu}{(z+\theta)^{\alpha}-\theta^{\alpha}}\Big)^{-1}\nonumber\\
	&=\sum_{n=0}^{\infty}\frac{(-\nu)^n}{z((z+\theta)^{\alpha}-\theta^{\alpha})^n}.
\end{align}
On taking the inverse Laplace transform of \eqref{xkLT}, we have
\begin{align*}\label{xkthetaalpha}
	x_{k}^{\theta,\alpha}(t)
	&=\sum_{n=0}^{\infty}(-\nu)^{n}\int_{0}^{t}e^{-\theta y}y^{n\alpha-1}E_{\alpha,n\alpha}^{n}(\theta^{\alpha}y^{\alpha})\mathrm{d}y\nonumber\\
	&=e^{-\theta t}\sum_{n=0}^{\infty}\sum_{r=0}^{\infty}(-\nu t^{\alpha})^{n}(\theta t)^{r}E_{\alpha,n\alpha+r+1}^{n}(\theta^{\alpha}t^{\alpha}),
\end{align*}
where the second equality is obtained by using the following result (see Kilbas {\it et al.} (2004), Eq. (2.26)):
\begin{equation*}\label{KilbasIntegrl}
	\int_{0}^{x}u^{\mu-1}E_{\rho,\mu}^{\gamma}(\omega u^{\rho})(x-u)^{\nu-1}\mathrm{d}u=\Gamma(\nu)x^{\mu+\nu-1}E_{\rho,\mu+\nu}^{\gamma}(\omega x^{\rho}).
\end{equation*}

Let $C_{\theta,\alpha}^{\nu}$ be the occurrence time of the first catastrophe in $\{\mathscr{ N}^{\theta,\alpha,\nu}(t)\}_{t\geq0}$ such that $\mathscr{N}^{\theta,\alpha,\nu}(0)=m$. Then,
\begin{align*}
	F_{C_{\theta,\alpha}^{\nu}}(t)\coloneqq\mathrm{Pr}\{C_{\theta,\alpha}^{\nu}\leq t\}&=1-\mathrm{Pr}\{C_{\theta,\alpha}^{\nu}>t\}\nonumber\\
	&=1-\mathrm{Pr}\{X^{\theta,\alpha}(t)=k|X^{\theta,\alpha}(0)=k\}\nonumber\\
	&=1-x_{k}^{\theta,\alpha}(t).
\end{align*}
As $\{\mathcal{ N}^{\nu}(t)\}_{t\geq0}$ is a Markov process, $\{\mathscr{ N}^{\theta,\alpha,\nu}(t)\}_{t\geq0}$ is a semi-Markov process. Let $I_n$ be its $n$th jump time. Then, $\mathscr{ N}^{\theta,\alpha,\nu}(I_n)$ is a time-homogeneous Markov chain (see Gikhman and Skorokhod (1975)). Thus, we have the required result.
\end{proof}

\begin{theorem}
For any state $m>0$, the sojourn time $S^{\theta,\alpha}_{m}$ for tempered BDPC has the following tempered Mittag-Leffler distribution:
\begin{equation*}
	\mathrm{Pr}\{S^{\theta,\alpha}_{m}>t\}=e^{-\theta t}\sum_{n=0}^{\infty}\sum_{r=0}^{\infty}(-(\lambda_{m}+\mu_{m}+\nu) t^{\alpha})^{n}(\theta t)^{r}E_{\alpha,n\alpha+r+1}^{n}(\theta^{\alpha}t^{\alpha}).
\end{equation*}
\end{theorem}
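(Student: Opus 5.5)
The plan is to mirror the proof of the preceding theorem on inter-occurrence times of catastrophes, replacing the catastrophe rate $\nu$ by the total exit rate $\lambda_{m}+\mu_{m}+\nu$ from state $m>0$.

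\emph{Step 1 (untimed process).} Introduce an auxiliary process $\{X_{m}(t)\}_{t\geq0}$ that starts at $m$ and jumps, with rate $\lambda_{m}+\mu_{m}+\nu$, to a single absorbing cemetery state $-1$. This cemetery state lumps together the three possible exits: birth to $m+1$, death to $m-1$, and catastrophe to $0$. Its state probabilities solve the analogue of \eqref{DE3}:
\[
\frac{\mathrm{d}}{\mathrm{d}t}x_{m}(t)=-(\lambda_{m}+\mu_{m}+\nu)x_{m}(t),\qquad x_{m}(0)=1.
\]
Hence $x_{m}(t)=e^{-(\lambda_{m}+\mu_{m}+\nu)t}$, which is the survival function of the sojourn time of the BDPC in state $m$.

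\emph{Step 2 (time change).} Define $X_{m}^{\theta,\alpha}(t)=X_{m}(Y_{\theta,\alpha}(t))$ with $Y_{\theta,\alpha}$ independent of $X_{m}$. Conditioning on $Y_{\theta,\alpha}(t)$ and using \eqref{temperedPDFLT}, exactly as in \eqref{xmztemp}, gives
\[
\tilde{x}_{m}^{\theta,\alpha}(z)=\frac{\phi_{\theta,\alpha}(z)}{z}\,\tilde{x}_{m}(\phi_{\theta,\alpha}(z)).
\]
Together with \eqref{CaputotemperedLT}, this shows that $x_{m}^{\theta,\alpha}$ solves
\[
\frac{\mathrm{d}^{\theta,\alpha}}{\mathrm{d}t^{\theta,\alpha}}x_{m}^{\theta,\alpha}=-(\lambda_{m}+\mu_{m}+\nu)x_{m}^{\theta,\alpha},\qquad x_{m}^{\theta,\alpha}(0)=1.
\]
Taking Laplace transforms yields $\tilde{x}_{m}^{\theta,\alpha}(z)=\phi_{\theta,\alpha}(z)/\big(z(\phi_{\theta,\alpha}(z)+\lambda_{m}+\mu_{m}+\nu)\big)$. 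This is \eqref{xkLT} with $\nu$ replaced by $\lambda_{m}+\mu_{m}+\nu$.

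\emph{Step 3 (inversion).} Expand the transform as the geometric series
\[
\sum_{n\ge0}\frac{\big(-(\lambda_{m}+\mu_{m}+\nu)\big)^{n}}{z\,\big((z+\theta)^{\alpha}-\theta^{\alpha}\big)^{n}},
\]
which is valid for $z$ large enough that $\phi_{\theta,\alpha}(z)>\lambda_{m}+\mu_{m}+\nu$. Invert term by term:
\begin{itemize}
\item $\big((z+\theta)^{\alpha}-\theta^{\alpha}\big)^{-n}$ is the shifted transform of $y^{n\alpha-1}E^{n}_{\alpha,n\alpha}(\theta^{\alpha}y^{\alpha})$, so it inverts to $e^{-\theta y}y^{n\alpha-1}E^{n}_{\alpha,n\alpha}(\theta^{\alpha}y^{\alpha})$ by \eqref{ltm12} and the shift rule;
\item the extra factor $1/z$ turns this into an integral over $[0,t]$.
\end{itemize}
To evaluate the integral, write $e^{-\theta y}=e^{-\theta t}e^{\theta(t-y)}$ and expand $e^{\theta(t-y)}=\sum_{r\ge0}\theta^{r}(t-y)^{r}/r!$. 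Apply the Kilbas integral formula with exponent $r+1$ to obtain
\[
e^{-\theta t}\sum_{n,r\ge0}\big(-(\lambda_{m}+\mu_{m}+\nu)t^{\alpha}\big)^{n}(\theta t)^{r}E^{n}_{\alpha,n\alpha+r+1}(\theta^{\alpha}t^{\alpha}).
\]
This is the claimed expression. The main technical point is justifying the interchange of the series and the integrals, which I would handle by absolute convergence, bounding the three-parameter Mittag-Leffler terms by their entire-function growth.

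\emph{Step 4 (identification with the sojourn time).} The last step identifies $\mathrm{Pr}\{S^{\theta,\alpha}_{m}>t\}$ with $x_{m}^{\theta,\alpha}(t)$. The event that the tempered BDPC has not left $m$ by time $t$ equals the event that $\mathcal{N}^{\nu}$ has not left $m$ by time $Y_{\theta,\alpha}(t)$. Its probability is therefore $\int_{0}^{\infty}e^{-(\lambda_{m}+\mu_{m}+\nu)y}\,\mathrm{Pr}\{Y_{\theta,\alpha}(t)\in\mathrm{d}y\}=x_{m}^{\theta,\alpha}(t)$. Finally, invoke the semi-Markov property cited from Gikhman and Skorokhod: the embedded chain is time-homogeneous, so this law holds for every visit to $m$, not only the first.
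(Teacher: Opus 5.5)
Your proposal is correct and is essentially the paper's own argument. The paper omits this proof; what it intends is exactly the argument for the preceding theorem on catastrophe inter-occurrence times, with $\nu$ replaced by $c=\lambda_m+\mu_m+\nu$: an auxiliary absorbing process, the time change by $Y_{\theta,\alpha}$, the transform $\phi_{\theta,\alpha}(z)/\big(z(\phi_{\theta,\alpha}(z)+c)\big)$, the geometric expansion, the shift rule with the Kilbas integral, and finally the semi-Markov property. Your Step 4 identification via $\int_0^\infty e^{-cy}\,\mathrm{Pr}\{Y_{\theta,\alpha}(t)\in\mathrm{d}y\}$ is the same computation as in Lemma \ref{lemma5.1}, and it makes the tempered fractional-equation step in Step 2 unnecessary.
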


\begin{lemma}\label{lemma5.1}
Let $\{D_{\theta,\alpha}(t)\}_{t\geq0}$ be a tempered stable subordinator with $\theta>0$ being its tempering parameter and $0<\alpha<1$ be its stability index. Also, let $T$ be a random variable having exponential distribution with rate $\lambda$ such that it is independent of $\{D_{\theta,\alpha}(t)\}_{t\geq0}$. Then, $D_{\theta,\alpha}(T)$ has tempered Mittag-Leffler distribution given by
\begin{equation*}
	\mathrm{Pr}\{D_{\theta,\alpha}(T)>t\}=e^{-\theta t}\sum_{n=0}^{\infty}\sum_{r=0}^{\infty}(-\lambda t^{\alpha})^{n}(\theta t)^{r}E_{\alpha,n\alpha+r+1}^{n}(\theta^{\alpha}t^{\alpha}).
\end{equation*}
\end{lemma}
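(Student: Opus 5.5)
The plan is to compute the Laplace transform (in $t$) of the survival function $\mathrm{Pr}\{D_{\theta,\alpha}(T)>t\}$, recognise it as the transform already computed in \eqref{xkLT}, and then reuse the inversion carried out in the proof of the preceding theorem on inter-occurrence times of catastrophes.

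First, condition on $T$ and use independence. For $z>0$,
\begin{equation*}
\mathbb{E}\big(e^{-zD_{\theta,\alpha}(T)}\big)=\int_{0}^{\infty}\lambda e^{-\lambda s}e^{-s\phi_{\theta,\alpha}(z)}\mathrm{d}s=\frac{\lambda}{\lambda+\phi_{\theta,\alpha}(z)},
\end{equation*}
where $\phi_{\theta,\alpha}(z)=(z+\theta)^{\alpha}-\theta^{\alpha}$. Since $\phi_{\theta,\alpha}(z)>0$ for $z>0$, this Laplace–Stieltjes transform of the law of $D_{\theta,\alpha}(T)$ is well defined. For a nonnegative random variable $W$ with survival function $\bar F_W$, we have $\mathbb{L}(\bar F_W)(z)=\big(1-\mathbb{E}e^{-zW}\big)/z$. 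Hence
\begin{equation*}
\mathbb{L}\big(\mathrm{Pr}\{D_{\theta,\alpha}(T)>t\}\big)(z)=\frac{\phi_{\theta,\alpha}(z)}{z(\phi_{\theta,\alpha}(z)+\lambda)}.
\end{equation*}
This is exactly the first line of \eqref{xkLT} with $\nu$ replaced by $\lambda$.

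Second, invert this transform as in the preceding proof. Whenever $\lambda<\phi_{\theta,\alpha}(z)$, expand it as the geometric series $\sum_{n\geq0}(-\lambda)^n z^{-1}\phi_{\theta,\alpha}(z)^{-n}$. Each term is inverted using the shifted form of \eqref{ltm12},
\begin{equation*}
\mathbb{L}\big(e^{-\theta t}t^{n\alpha-1}E^{n}_{\alpha,n\alpha}(\theta^{\alpha}t^{\alpha})\big)(z)=\frac{1}{((z+\theta)^{\alpha}-\theta^{\alpha})^{n}}.
\end{equation*}
The factor $1/z$ then corresponds to integrating from $0$ to $t$. Next expand $e^{-\theta y}=e^{-\theta t}\sum_{r}(\theta(t-y))^r/r!$ and apply Kilbas' integral formula term by term. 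This yields $e^{-\theta t}\sum_{n,r}(-\lambda t^{\alpha})^{n}(\theta t)^{r}E^{n}_{\alpha,n\alpha+r+1}(\theta^{\alpha}t^{\alpha})$. The $n=0$ term has to be read as the constant $1$, i.e. $e^{-\theta t}\sum_r(\theta t)^r/r!$, via the convention $E^{0}_{\alpha,\beta}(x)=1/\Gamma(\beta)$.

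I expect the main obstacle to be justifying the termwise inversion. The geometric expansion is valid only for $z$ large. By uniqueness of the Laplace transform, it suffices to verify the identity of transforms on a half-line $z>z_0$ with $\phi_{\theta,\alpha}(z_0)>\lambda$. For that one needs absolute convergence of the double series, which I would get from the bound $|E^{n}_{\alpha,\beta}(x)|\leq E^{n}_{\alpha,\beta}(|x|)$ together with Fubini/Tonelli. Alternatively, one can bypass the series entirely. Observe that $\mathrm{Pr}\{D_{\theta,\alpha}(T)>t\}=\mathrm{Pr}\{T>Y_{\theta,\alpha}(t)\}=\int_0^\infty e^{-\lambda y}\mathrm{Pr}\{Y_{\theta,\alpha}(t)\in\mathrm{d}y\}$, in analogy with \eqref{Zz}. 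By \eqref{temperedPDFLT} and \eqref{xmztemp}, this function has the same transform as $x^{\theta,\alpha}_k(t)$ with $\nu=\lambda$. The explicit formula then follows directly from the computation already done in the preceding theorem.
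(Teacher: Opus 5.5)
Your proposal is correct and is essentially the paper's argument. The paper writes $\mathrm{Pr}\{D_{\theta,\alpha}(T)>t\}=\mathrm{Pr}\{T>Y_{\theta,\alpha}(t)\}=\int_{0}^{\infty}e^{-\lambda y}\mathrm{Pr}\{Y_{\theta,\alpha}(t)\in\mathrm{d}y\}$, which is your ``alternative'' route, uses \eqref{temperedPDFLT} to see that its Laplace transform equals \eqref{xkLT} (with $\nu$ replaced by $\lambda$), and defers to the inversion already done there. Your primary computation $\mathbb{E}(e^{-zD_{\theta,\alpha}(T)})=\lambda/(\lambda+\phi_{\theta,\alpha}(z))$, obtained by conditioning on $T$, reaches the same transform directly from the Laplace exponent without the first-passage identity. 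Your handling of the $n=0$ term via $E^{0}_{\alpha,\beta}(x)=1/\Gamma(\beta)$ and of the termwise inversion (every term is nonnegative apart from the sign $(-\lambda)^{n}$, so Tonelli/Fubini on $z>z_0$ with $\phi_{\theta,\alpha}(z_0)>\lambda$ plus uniqueness of the Laplace transform suffices) supplies details the paper leaves implicit.
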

\begin{proof}
Consider
\begin{align}\label{TempMLdistEq}
	\mathrm{Pr}\{D_{\theta,\alpha}(T)>t\}
	&=\mathrm{Pr}\{T>Y_{\theta,\alpha}(t)\}\nonumber\\
	&=\int_{0}^{\infty}\mathrm{Pr}\{T>y\}\mathrm{Pr}\{Y_{\theta,\alpha}(t)\in\mathrm{d}y\}\nonumber\\
	&=\int_{0}^{\infty}e^{-\lambda y}\mathrm{Pr}\{Y_{\theta,\alpha}(t)\in\mathrm{d}y\}.
\end{align}
The result holds as the Laplace transform of \eqref{TempMLdistEq} is equal to \eqref{xkLT}.
\end{proof}

Let us define the first visit time to state zero for tempered BDPC as follows:
\begin{equation*}
	T_{m,0}^{\theta,\alpha,\nu}=\inf\{t\geq0:\mathscr{ N}^{\theta,\alpha,\nu}(t)=0\},\,\mathscr{ N}^{\theta,\alpha,\nu}(0)=m>0.
\end{equation*}
Also, let $T_{m,0}^{\theta,\alpha,0}$ be the first visit time to state zero for the corresponding tempered BDP $\{\mathscr{ N}^{\theta,\alpha}(t)\}_{t\geq0}$.

\begin{theorem}\label{thm5.3}
Let $Y$ be a random variable which is equal in distribution with the first occurrence time of catastrophe in tempered BDPC such that $T_{m,0}^{\theta,\alpha,0}$ and $Y$ are conditionally independent given $Y_{\theta,\alpha}(t)$ for all $t>0$. If zero is an absorbing state of the tempered BDPC, that is, $\lambda_{0}=0$ and $\mathscr{ N}^{\theta,\alpha,\nu}(0)=m>0$, we have 
\begin{equation}\label{TTtemp}
	T_{m,0}^{\theta,\alpha,\nu}\overset{d}{=} \min\{T_{m,0}^{\theta,\alpha,0},Y\}.
\end{equation}
\end{theorem}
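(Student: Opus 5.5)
The plan is to mirror the proof of Theorem \ref{thm3}, replacing the inverse stable subordinator by $Y_{\theta,\alpha}(t)$ and the Mittag-Leffler variable by the tempered Mittag-Leffler variable of Lemma \ref{lemma5.1}. Let $T_{m,0}^{\nu}$ and $T_{m,0}$ be the first visit times to zero of the BDPC and BDP, respectively.

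First we compute the survival function of $T_{m,0}^{\theta,\alpha,\nu}$. Since zero is absorbing ($\lambda_0=0$), the event $\{T_{m,0}^{\theta,\alpha,\nu}>t\}$ coincides with $\{\mathscr{N}^{\theta,\alpha,\nu}(t)\neq 0\}$. Conditioning on $Y_{\theta,\alpha}(t)$ and using its independence from $\mathcal{N}^{\nu}$, we obtain
\begin{equation*}
\mathrm{Pr}\{T_{m,0}^{\theta,\alpha,\nu}>t\}=\int_{0}^{\infty}\mathrm{Pr}\{T_{m,0}^{\nu}>y\}\mathrm{Pr}\{Y_{\theta,\alpha}(t)\in\mathrm{d}y\}=\int_{0}^{\infty}e^{-\nu y}\mathrm{Pr}\{T_{m,0}>y\}\mathrm{Pr}\{Y_{\theta,\alpha}(t)\in\mathrm{d}y\}.
\end{equation*}
The last step uses $T_{m,0}^{\nu}\overset{d}{=}\min\{T_{m,0},X\}$ with $X$ exponential of rate $\nu$ and independent of $T_{m,0}$ (Di Crescenzo et al. (2008)), exactly as in \eqref{survTmoalphanu}.

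Next, set $\vartheta=\min\{T_{m,0}^{\theta,\alpha,0},Y\}$. Write $\mathrm{Pr}\{\vartheta>t\}$ as an integral over the law of $Y_{\theta,\alpha}(t)$ of the conditional joint survival. The assumed conditional independence given $Y_{\theta,\alpha}(t)$ factors this into $\mathrm{Pr}\{T_{m,0}^{\theta,\alpha,0}>t\,|\,Y_{\theta,\alpha}(t)\in\mathrm{d}y\}$ times $\mathrm{Pr}\{Y>t\,|\,Y_{\theta,\alpha}(t)\in\mathrm{d}y\}$. The first factor equals $\mathrm{Pr}\{T_{m,0}>y\}$ by the same argument as \eqref{Tmoalpha}, again using that $\mathcal{N}$ is independent of the time change and that zero is absorbing.

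For the second factor we use the theorem on inter-occurrence times of catastrophes together with Lemma \ref{lemma5.1}. These show that the first catastrophe time in the tempered BDPC has the same law as $D_{\theta,\alpha}(X)$, where $X$ is exponential of rate $\nu$ and independent of $D_{\theta,\alpha}$. So we may realise $Y=D_{\theta,\alpha}(X)$. Since $\{D_{\theta,\alpha}(X)>t\}=\{X>Y_{\theta,\alpha}(t)\}$, which is the first step of the proof of Lemma \ref{lemma5.1} (the analogue of Lemma 7.1 of Ascione et al. (2020)), we get $\mathrm{Pr}\{Y>t\,|\,Y_{\theta,\alpha}(t)\in\mathrm{d}y\}=\mathrm{Pr}\{X>y\}=e^{-\nu y}$. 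Substituting both factors reproduces the integral obtained above, which proves \eqref{TTtemp}.

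The main obstacle is this last step. It requires choosing a version of $Y$ that is coupled to the same subordinator $D_{\theta,\alpha}$ generating $Y_{\theta,\alpha}$, while remaining consistent with the hypothesis of conditional independence. The statement only prescribes the law of $Y$, so we must argue that this realisation is legitimate: the conclusion concerns only the distribution of $\min\{T_{m,0}^{\theta,\alpha,0},Y\}$, and that distribution is determined by the conditional structure assumed. This is handled exactly as in the proof of Theorem \ref{thm3}.
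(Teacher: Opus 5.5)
Your proposal follows the paper's proof essentially step for step. The four steps match: the survival function of $T_{m,0}^{\theta,\alpha,\nu}$ via conditioning on $Y_{\theta,\alpha}(t)$ and $T_{m,0}^{\nu}\overset{d}{=}\min\{T_{m,0},X\}$; the factorisation by conditional independence; the first factor from the independence of $\mathcal{N}$ and the time change; and the second factor by realising $Y$ as $D_{\theta,\alpha}(X)$ via Lemma \ref{lemma5.1}.

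\textbf{Correction to your last paragraph.} The justification you give for that realisation is not valid. The law of $\min\{T_{m,0}^{\theta,\alpha,0},Y\}$ is \emph{not} determined by the marginal law of $Y$ together with conditional independence given $Y_{\theta,\alpha}(t)$.

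\emph{Counterexample.} Take $Y$ with the tempered Mittag-Leffler law but independent of $(\mathcal{N},D_{\theta,\alpha})$. It satisfies both stated hypotheses, yet it gives $\mathrm{Pr}\{\min\{T_{m,0}^{\theta,\alpha,0},Y\}>t\}=\mathbb{E}[S(Y_{\theta,\alpha}(t))]\,\mathbb{E}[e^{-\nu Y_{\theta,\alpha}(t)}]$, where $S(y)=\mathrm{Pr}\{T_{m,0}>y\}$. This is in general strictly smaller than $\mathbb{E}[S(Y_{\theta,\alpha}(t))e^{-\nu Y_{\theta,\alpha}(t)}]=\mathrm{Pr}\{T_{m,0}^{\theta,\alpha,\nu}>t\}$. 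The reason is Chebyshev's covariance inequality: $S$ and $y\mapsto e^{-\nu y}$ are both non-increasing and $Y_{\theta,\alpha}(t)$ is non-degenerate.

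\emph{What is actually needed.} The argument uses $\mathrm{Pr}\{Y>t\mid Y_{\theta,\alpha}(t)=y\}=e^{-\nu y}$. In other words, $Y=D_{\theta,\alpha}(X)$, built from the same subordinator that generates $Y_{\theta,\alpha}$, with $X$ exponential of rate $\nu$ and independent of $(\mathcal{N},D_{\theta,\alpha})$. This is an additional hypothesis, not a consequence of the stated ones. The paper adopts it tacitly in exactly the same way ("Let $X$ be independent of the tempered stable subordinator").

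\emph{Shortcut under this hypothesis.} Once you assume it, the conditioning and the conditional-independence hypothesis can be bypassed entirely. Almost surely $T_{m,0}^{\theta,\alpha,0}=D_{\theta,\alpha}(T_{m,0})$, so $\min\{T_{m,0}^{\theta,\alpha,0},D_{\theta,\alpha}(X)\}=D_{\theta,\alpha}(\min\{T_{m,0},X\})\overset{d}{=}D_{\theta,\alpha}(T_{m,0}^{\nu})$, and the last variable has the law of $T_{m,0}^{\theta,\alpha,\nu}$.
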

\begin{proof}
Recall that $T_{m,0}^{\nu}$ and $T_{m,0}$ are the first visit time to state zero for BDPC $\{\mathcal{ N}^{\nu}(t)\}_{t\geq0}$ and BDP $\{\mathcal{ N}(t)\}_{t\geq0}$, respectively. 

As zero is an absorbing state of tempered BDPC, we have
\begin{align}\label{survTmoalphanutheta}
		\mathrm{Pr}\{T_{m,0}^{\theta,\alpha,\nu}>t\}
		&=\int_{0}^{\infty}e^{-\nu y}\mathrm{Pr}\{T_{m,0}>y\}\mathrm{Pr}\{Y_{\theta,\alpha}(t)\in\mathrm{d}y\},
\end{align}
which is obtained by using $T_{m,0}^{\nu}\overset{d}{=}\min\{T_{m,0},X\}$ such that $X$ is an exponential random variable with rate $\nu$ (see Di Crescenzo {\it et al.} (2008), p. 2250).
Consider
{\small\begin{align}\label{thetatemp}
	\mathrm{Pr}\{\min\{T_{m,0}^{\theta,\alpha,0},Y\}>t\}
	&=\mathrm{Pr}\{T_{m,0}^{\theta,\alpha,0}>t,Y>t\}\nonumber\\
	&=\int_{0}^{\infty}\mathrm{Pr}\{T_{m,0}^{\theta,\alpha,0}>t,Y>t|Y_{\theta,\alpha}(t)\in\mathrm{d}y\}\mathrm{Pr}\{Y_{\theta,\alpha}(t)\in\mathrm{d}y\}\nonumber\\
	&=\int_{0}^{\infty}\mathrm{Pr}\{T_{m,0}^{\theta,\alpha,0}>t|Y_{\theta,\alpha}(t)\in\mathrm{d}y\}\mathrm{Pr}\{Y>t|Y_{\theta,\alpha}(t)\in\mathrm{d}y\}\mathrm{Pr}\{Y_{\theta,\alpha}(t)\in\mathrm{d}y\},
\end{align}}
where the last step is obtained by using the conditional independence of $T_{m,0}^{\theta,\alpha,0}$ and $Y$ given $Y_{\theta,\alpha}(t)$ for all $t>0$.
As $\mathcal{N}(y)$ and $Y_{\theta,\alpha}(t)$ are independent, we have
\begin{equation}\label{Tmoalphatheta}
		\mathrm{Pr}\{T_{m,0}^{\theta,\alpha,0}>t|Y_{\theta,\alpha}(t)\in\mathrm{d}y\}
		=\mathrm{Pr}\{T_{m,0}>y\}.
\end{equation}
Let $X$ be independent of the tempered stable subordinator $\{D_{\theta,\alpha}(t)\}_{t\geq0}$, $\theta>0$, $0<\alpha<1$. On using the Lemma \ref{lemma5.1}, we get
\begin{equation}\label{Xtemp}
	\mathrm{Pr}\{Y>t|Y_{\theta,\alpha}(t)\in\mathrm{d}y\}
	=e^{-\nu y}.
\end{equation}
Now, by using \eqref{Tmoalphatheta} and \eqref{Xtemp} in \eqref{thetatemp}, we obtain
\begin{align*}
	\mathrm{Pr}\{\min\{T_{m,0}^{\theta,\alpha,0},Y\}>t\}
	&=\int_{0}^{\infty}e^{-\nu y}\mathrm{Pr}\{T_{m,0}>y\}\mathrm{Pr}\{Y_{\theta,\alpha}(t)\in\mathrm{d}y\}\nonumber\\
	&=\mathrm{Pr}\{T_{m,0}^{\theta,\alpha,\nu}>t\}.
\end{align*}
This completes the proof.
\end{proof}

\begin{corollary}
Let zero be an absorbing state of tempered BDPC. The Laplace transform of the pdf of first visit time to state zero of tempered BDPC is given by
\begin{equation*}
	\tilde{f}_{m,0}^{\theta,\alpha,\nu}(z)=\frac{\nu}{\nu+\phi_{\theta,\alpha}(z)}+\frac{\phi_{\theta,\alpha}(z)}{\nu+\phi_{\theta,\alpha}(z)}\tilde{f}_{m,0}(\nu+\phi_{\theta,\alpha}(z)),
\end{equation*}
where $f_{m,0}^{\theta,\alpha,\nu}(t)$ is the pdf of $T_{m,0}^{\theta,\alpha,\nu}$.
\end{corollary}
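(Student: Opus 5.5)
The plan follows the proof of Corollary \ref{Corr1}, replacing the inverse stable subordinator by the inverse tempered stable subordinator and \eqref{ltLnu12} by \eqref{temperedPDFLT}.

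First, I take the survival function from \eqref{survTmoalphanutheta},
\begin{equation*}
\mathrm{Pr}\{T_{m,0}^{\theta,\alpha,\nu}>t\}=\int_{0}^{\infty}e^{-\nu y}\mathrm{Pr}\{T_{m,0}>y\}\mathrm{Pr}\{Y_{\theta,\alpha}(t)\in\mathrm{d}y\}.
\end{equation*}
Rather than differentiating under the integral in $t$, I will use $\tilde{f}(z)=1-z\,\mathbb{L}(\mathrm{Pr}\{T>t\})(z)$. This is valid because $T_{m,0}^{\theta,\alpha,\nu}>0$ almost surely when $m>0$, so its survival function equals $1$ at $t=0$. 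Taking the Laplace transform in $t$ and applying \eqref{temperedPDFLT} inside the integral (Fubini is justified since everything is nonnegative and bounded) gives
\begin{equation*}
\mathbb{L}(\mathrm{Pr}\{T_{m,0}^{\theta,\alpha,\nu}>t\})(z)=\frac{\phi_{\theta,\alpha}(z)}{z}\int_{0}^{\infty}e^{-(\nu+\phi_{\theta,\alpha}(z))y}\mathrm{Pr}\{T_{m,0}>y\}\mathrm{d}y .
\end{equation*}

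Second, I evaluate the remaining integral. Writing $\mathrm{Pr}\{T_{m,0}>y\}=\int_{y}^{\infty}f_{m,0}(x)\mathrm{d}x$ and swapping the order of integration, as in Corollary \ref{Corr1}, gives for $s>0$
\begin{equation*}
\int_0^\infty e^{-sy}\mathrm{Pr}\{T_{m,0}>y\}\mathrm{d}y=\frac{1-\tilde{f}_{m,0}(s)}{s}.
\end{equation*}
I apply this with $s=\nu+\phi_{\theta,\alpha}(z)$. This $s$ is positive for $z>0$, because $\phi_{\theta,\alpha}(z)=(z+\theta)^\alpha-\theta^\alpha>0$.

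Third, I combine the two steps:
\begin{equation*}
\tilde{f}_{m,0}^{\theta,\alpha,\nu}(z)=1-\frac{\phi_{\theta,\alpha}(z)}{\nu+\phi_{\theta,\alpha}(z)}\big(1-\tilde{f}_{m,0}(\nu+\phi_{\theta,\alpha}(z))\big).
\end{equation*}
Since $1-\frac{\phi_{\theta,\alpha}(z)}{\nu+\phi_{\theta,\alpha}(z)}=\frac{\nu}{\nu+\phi_{\theta,\alpha}(z)}$, this rearranges directly to the stated formula.

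I do not expect a real obstacle; every step is routine. The only delicate point is that $T_{m,0}$ may be defective, for example when $\lambda>\mu$ in the linear case. The identity in the second step still holds with $\tilde{f}_{m,0}$ defined as the transform of the possibly defective density, and the $e^{-\nu y}$ factor keeps all integrals finite. If $\mathrm{Pr}\{T_{m,0}=\infty\}>0$, I would interpret $\mathrm{Pr}\{T_{m,0}>y\}$ as $\int_y^\infty f_{m,0}(x)\mathrm{d}x+\mathrm{Pr}\{T_{m,0}=\infty\}$. The extra constant then contributes $\frac{\phi_{\theta,\alpha}(z)}{\nu+\phi_{\theta,\alpha}(z)}\mathrm{Pr}\{T_{m,0}=\infty\}$, which is absorbed consistently once $\tilde{f}_{m,0}$ is taken as the defective transform. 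I would note this convention, matching the one used implicitly in Corollary \ref{Corr1}, rather than belabor it.
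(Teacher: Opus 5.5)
Your proposal is correct and follows essentially the same route as the paper: both start from \eqref{survTmoalphanutheta}, apply \eqref{temperedPDFLT}, and use the same Fubini swap to get $\frac{1-\tilde{f}_{m,0}(s)}{s}$ with $s=\nu+\phi_{\theta,\alpha}(z)$. The only difference is cosmetic. You use $\tilde{f}(z)=1-z\,\mathbb{L}(\mathrm{Pr}\{T>t\})(z)$ rather than differentiating under the integral and picking up the $\mathrm{Pr}\{Y_{\theta,\alpha}(0)\in\mathrm{d}y\}$ term, which avoids that interchange, and your remark that the identity survives a defective $T_{m,0}$ is correct.
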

\begin{proof}
From \eqref{survTmoalphanutheta}, we have
\begin{align}\label{fmoalphanutheta}
	f_{m,0}^{\theta,\alpha,\nu}(t)
	&=-\int_{0}^{\infty}e^{-\nu y}\mathrm{Pr}\{T_{m,0}>y\}\frac{\mathrm{d}}{\mathrm{d}t}\mathrm{Pr}\{Y_{\theta,\alpha}(t)\in\mathrm{d}y\}.
\end{align}
On taking the Laplace transform of \eqref{fmoalphanutheta} and using \eqref{temperedPDFLT}, we obtain
\begin{align*}
	\tilde{f}_{m,0}^{\theta,\alpha,\nu}(z)
	&=-\int_{0}^{\infty}e^{-\nu y}\mathrm{Pr}\{T_{m,0}>y\}(\phi_{\theta,\alpha}(z)e^{-\phi_{\theta,\alpha}(z)y}\mathrm{d}y-\mathrm{Pr}\{Y_{\theta,\alpha}(0)\in\mathrm{d}y\})\nonumber\\
	&=1-\phi_{\theta,\alpha}(z)\int_{0}^{\infty}\int_{y}^{\infty}e^{-(\nu+\phi_{\theta,\alpha}(z))y}f_{m,0}(x)\mathrm{d}x\mathrm{d}y\nonumber\\
	&=1-\phi_{\theta,\alpha}(z)\int_{0}^{\infty}f_{m,0}(x)\int_{0}^{x}e^{-(\nu+\phi_{\theta,\alpha}(z))y}\mathrm{d}y\mathrm{d}x\nonumber\\
	&=1+\frac{\phi_{\theta,\alpha}(z)}{\nu+\phi_{\theta,\alpha}(z)}(\tilde{f}_{m,0}(\nu+\phi_{\theta,\alpha}(z))-1).
\end{align*}
Thus, we get the required result.
\end{proof}

\begin{theorem}
Let zero be an absorbing state of tempered BDPC. If tempered BDPC starts at $m>0$, that is, $\mathscr{N}^{\theta,\alpha,\nu}(0)=m$. Then, the expected first visit time to state zero is given by
\begin{equation*}
	\mathbb{E}(T_{m,0}^{\theta,\alpha,\nu})=\frac{\alpha\theta^{\alpha-1}}{\nu}(1-\tilde{f}_{m,0}(\nu)).
\end{equation*}
\end{theorem}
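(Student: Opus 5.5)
The plan is to read the expectation off the Laplace transform of the survival function at $z=0$, that is, to use $\mathbb{E}(T_{m,0}^{\theta,\alpha,\nu})=\int_{0}^{\infty}\mathrm{Pr}\{T_{m,0}^{\theta,\alpha,\nu}>t\}\mathrm{d}t=\lim_{z\to0^{+}}\mathbb{L}(\mathrm{Pr}\{T_{m,0}^{\theta,\alpha,\nu}>t\})(z)$. The limit is justified by monotone convergence, since the integrand is nonnegative and $e^{-zt}\uparrow1$ as $z\downarrow0$.

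First, take the Laplace transform of \eqref{survTmoalphanutheta} and use \eqref{temperedPDFLT}. This gives
\begin{equation*}
\mathbb{L}(\mathrm{Pr}\{T_{m,0}^{\theta,\alpha,\nu}>t\})(z)=\frac{\phi_{\theta,\alpha}(z)}{z}\int_{0}^{\infty}e^{-(\nu+\phi_{\theta,\alpha}(z))y}\mathrm{Pr}\{T_{m,0}>y\}\mathrm{d}y.
\end{equation*}
Next, evaluate the inner integral by Fubini, exactly as in the proof of the preceding corollary. Writing $s=\nu+\phi_{\theta,\alpha}(z)$, we get
\begin{equation*}
\int_{0}^{\infty}e^{-sy}\mathrm{Pr}\{T_{m,0}>y\}\mathrm{d}y=\frac{1-\tilde{f}_{m,0}(s)}{s}.
\end{equation*}
The same expression also follows from the corollary itself, since $(1-\tilde{f}_{m,0}^{\theta,\alpha,\nu}(z))/z$ is the Laplace transform of the survival function.

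Now let $z\to0^{+}$. We have $\phi_{\theta,\alpha}(z)=(z+\theta)^{\alpha}-\theta^{\alpha}\to0$, so $s\to\nu$. Moreover $\phi_{\theta,\alpha}(z)/z\to\phi_{\theta,\alpha}'(0)=\alpha\theta^{\alpha-1}$, which is finite precisely because $\theta>0$; this is the key difference from the untempered case, where the mean is infinite. The map $s\mapsto\tilde{f}_{m,0}(s)$ is continuous on $(0,\infty)$, so $\tilde{f}_{m,0}(s)\to\tilde{f}_{m,0}(\nu)$. Combining these limits gives $\alpha\theta^{\alpha-1}(1-\tilde{f}_{m,0}(\nu))/\nu$.

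The main point requiring care is justifying the interchange of the limit with the transform. Monotone convergence on the survival integral handles this cleanly and also yields finiteness a posteriori. Continuity of $\tilde{f}_{m,0}$ at $\nu>0$ is immediate from dominated convergence, since $f_{m,0}$ is a density (possibly defective if absorption is not certain, which does not affect the argument).
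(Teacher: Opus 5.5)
Your argument is correct, but it takes a different route from the paper. The paper does not pass through the Laplace transform. It rewrites $\int_{0}^{\infty}\mathrm{Pr}\{T_{m,0}^{\nu}>Y_{\theta,\alpha}(t)\}\,\mathrm{d}t$ as $\int_{0}^{\infty}\mathrm{Pr}\{D_{\theta,\alpha}(T_{m,0}^{\nu})>t\}\,\mathrm{d}t=\mathbb{E}(D_{\theta,\alpha}(T_{m,0}^{\nu}))$. It then conditions on $T_{m,0}^{\nu}$ and uses $\mathbb{E}(D_{\theta,\alpha}(x))=\alpha\theta^{\alpha-1}x$ to get $\mathbb{E}(T_{m,0}^{\theta,\alpha,\nu})=\alpha\theta^{\alpha-1}\mathbb{E}(T_{m,0}^{\nu})$. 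Finally it imports $\mathbb{E}(T_{m,0}^{\nu})=(1-\tilde{f}_{m,0}(\nu))/\nu$ from Di Crescenzo et al. (2008).

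You instead let $z\to0^{+}$ in the transform of the survival function. This has three advantages:
\begin{itemize}
\item The factor $(1-\tilde{f}_{m,0}(s))/s$ at $s=\nu$ is exactly the BDPC mean, which you derive rather than cite.
\item The constant $\alpha\theta^{\alpha-1}$ appears as $\phi_{\theta,\alpha}'(0)$, which is of course $\mathbb{E}(D_{\theta,\alpha}(1))$. The same expression shows at a glance why the untempered mean is infinite, since there $\phi(z)/z=z^{\alpha-1}\to\infty$.
\item Your monotone-convergence step cleanly justifies the interchange that the paper performs implicitly.
\end{itemize}

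The paper's factorization has its own advantage. It is structural: it holds for any nonnegative random time independent of the subordinator. It also extends at once to the second moment via $\mathbb{E}((D_{\theta,\alpha}(x))^{2})$, which is how the variance in the next theorem is obtained. With your method you would have to differentiate the transform at $z=0$, expanding $\phi_{\theta,\alpha}(z)/z$ to second order and $\tilde{f}_{m,0}$ near $\nu$. That works, but it is messier.
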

\begin{proof}
	As $T_{m,0}^{\theta,\alpha,\nu}$ is a non-negative continuous random variable, we have
	\begin{align}\label{expTmo}
		\mathbb{E}(T_{m,0}^{\theta,\alpha,\nu})
		&=\int_{0}^{\infty}\mathrm{Pr}\{T_{m,0}^{\theta,\alpha,\nu}>t\}\mathrm{d}t\nonumber\\
		&=\int_{0}^{\infty}\int_{0}^{\infty}\mathrm{Pr}\{T_{m,0}^{\nu}>y\}\mathrm{Pr}\{Y_{\theta,\alpha}(t)\in\mathrm{d}y\}\mathrm{d}t\nonumber\\
		&=\int_{0}^{\infty}\mathrm{Pr}\{T_{m,0}^{\nu}>Y_{\theta,\alpha}(t)\}\mathrm{d}t\nonumber\\
		&=\int_{0}^{\infty}\mathrm{Pr}\{D_{\theta,\alpha}(T_{m,0}^{\nu})>t\}\mathrm{d}t\nonumber\\
		&=\mathbb{E}\big(D_{\theta,\alpha}(T_{m,0}^{\nu})\big)\nonumber\\
		&=\int_{0}^{\infty}\mathbb{E}(D_{\theta,\alpha}(x))\mathrm{Pr}\{T_{m,0}^{\nu}\in\mathrm{d}x\}\nonumber\\
		&=\alpha\theta^{\alpha-1}\int_{0}^{\infty}x\mathrm{Pr}\{T_{m,0}^{\nu}\in\mathrm{d}x\}\nonumber\\
		&=\alpha\theta^{\alpha-1}\mathbb{E}(T_{m,0}^{\nu}),
	\end{align}
	where we have used the expectation of tempered stable subordinator in the penultimate step (see Gupta {\it et al.} (2020), p. 948). Finally, the proof is complete on using the expectation of the first visit time to state zero for BDPC (see Di Crescenzo {\it et al.} (2008), Eq. (7)) in \eqref{expTmo}.
\end{proof}

\begin{theorem}
Let zero be an absorbing state of tempered BDPC.
For $\mathscr{N}^{\theta,\alpha,\nu}(0)=m$, the variance of the first visit time to state zero is given by
\begin{equation*}
	Var(T_{m,0}^{\theta,\alpha,\nu})=\frac{\alpha(1-\alpha)\theta^{\alpha-2}}{\nu}(1-\tilde{f}_{m,0}(\nu))+\frac{2\alpha^2\theta^{2(\alpha-1)}}{\nu}\frac{\mathrm{d}}{\mathrm{d}\nu}\tilde{f}_{m,0}(\nu)+\frac{\alpha^2\theta^{2(\alpha-1)}}{\nu^2}(1-(\tilde{f}_{m,0}(\nu))^2).
\end{equation*}
\end{theorem}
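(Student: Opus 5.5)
The plan is to reuse the subordination identity from the proof of the previous theorem, now at the level of second moments. Arguing exactly as in \eqref{expTmo}, from \eqref{survTmoalphanutheta} and the relation $\mathrm{Pr}\{T_{m,0}^{\nu}>Y_{\theta,\alpha}(t)\}=\mathrm{Pr}\{D_{\theta,\alpha}(T_{m,0}^{\nu})>t\}$, we get $T_{m,0}^{\theta,\alpha,\nu}\overset{d}{=}D_{\theta,\alpha}(T_{m,0}^{\nu})$. Here $D_{\theta,\alpha}$ is independent of $T_{m,0}^{\nu}$. Hence the variance of $T_{m,0}^{\theta,\alpha,\nu}$ equals the variance of a subordinator evaluated at an independent random time. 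We compute it by conditioning on $T_{m,0}^{\nu}=x$:
\begin{equation*}
Var\big(D_{\theta,\alpha}(T_{m,0}^{\nu})\big)=\mathbb{E}\big(Var(D_{\theta,\alpha}(x))\big|_{x=T_{m,0}^{\nu}}\big)+Var\big(\mathbb{E}(D_{\theta,\alpha}(x))\big|_{x=T_{m,0}^{\nu}}\big).
\end{equation*}

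Next we need the first two cumulants of $D_{\theta,\alpha}(x)$. Differentiating the Laplace exponent $x\phi_{\theta,\alpha}(z)$ at $z=0$ gives $\mathbb{E}(D_{\theta,\alpha}(x))=x\phi_{\theta,\alpha}'(0)=\alpha\theta^{\alpha-1}x$. It also gives $Var(D_{\theta,\alpha}(x))=-x\phi_{\theta,\alpha}''(0)=\alpha(1-\alpha)\theta^{\alpha-2}x$, which is consistent with Gupta \emph{et al.} (2020). Substituting these into the conditioning formula yields
\begin{equation*}
Var(T_{m,0}^{\theta,\alpha,\nu})=\alpha(1-\alpha)\theta^{\alpha-2}\,\mathbb{E}(T_{m,0}^{\nu})+\alpha^{2}\theta^{2(\alpha-1)}\,Var(T_{m,0}^{\nu}).
\end{equation*}

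The remaining step, and the only real computation, is to express the first two moments of $T_{m,0}^{\nu}$ through $\tilde f_{m,0}$. We use $T_{m,0}^{\nu}\overset{d}{=}\min\{T_{m,0},X\}$, where $X$ is exponential with rate $\nu$ and independent of $T_{m,0}$. This gives $\mathrm{Pr}\{T_{m,0}^{\nu}>t\}=e^{-\nu t}\mathrm{Pr}\{T_{m,0}>t\}$, so
\begin{equation*}
\mathbb{E}(T_{m,0}^{\nu})=\int_0^\infty e^{-\nu t}\mathrm{Pr}\{T_{m,0}>t\}\,\mathrm{d}t=\frac{1-\tilde f_{m,0}(\nu)}{\nu},
\end{equation*}
which is Eq. (7) of Di Crescenzo \emph{et al.} (2008). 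For the second moment we write
\begin{equation*}
\mathbb{E}\big((T_{m,0}^{\nu})^2\bigr)=\int_0^\infty 2t\,e^{-\nu t}\mathrm{Pr}\{T_{m,0}>t\}\,\mathrm{d}t=-2\frac{\mathrm{d}}{\mathrm{d}\nu}\Big(\frac{1-\tilde f_{m,0}(\nu)}{\nu}\Big)=\frac{2(1-\tilde f_{m,0}(\nu))}{\nu^{2}}+\frac{2}{\nu}\frac{\mathrm{d}}{\mathrm{d}\nu}\tilde f_{m,0}(\nu).
\end{equation*}
Differentiation under the integral sign is justified because $\nu>0$ gives an integrable dominating function.

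Subtracting the square of the mean gives
\begin{equation*}
Var(T_{m,0}^{\nu})=\frac{1-(\tilde f_{m,0}(\nu))^2}{\nu^2}+\frac{2}{\nu}\frac{\mathrm{d}}{\mathrm{d}\nu}\tilde f_{m,0}(\nu).
\end{equation*}
Inserting this and the expression for $\mathbb{E}(T_{m,0}^{\nu})$ into the displayed formula for $Var(T_{m,0}^{\theta,\alpha,\nu})$ produces exactly the three terms in the statement.

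The main point requiring care is the conditional-variance decomposition. It needs finite second moments of $D_{\theta,\alpha}(x)$, which holds since $\theta>0$ makes $\phi_{\theta,\alpha}$ smooth at $0$. It also needs independence of $D_{\theta,\alpha}$ and $T_{m,0}^{\nu}$, which holds because the subordinator is independent of the BDPC.
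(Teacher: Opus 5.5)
Your proposal is correct and follows essentially the same route as the paper: both identify $T_{m,0}^{\theta,\alpha,\nu}\overset{d}{=}D_{\theta,\alpha}(T_{m,0}^{\nu})$ and condition on $T_{m,0}^{\nu}$ using $\mathbb{E}(D_{\theta,\alpha}(x))=\alpha\theta^{\alpha-1}x$ and $\mathrm{Var}(D_{\theta,\alpha}(x))=\alpha(1-\alpha)\theta^{\alpha-2}x$; your law-of-total-variance step is just the paper's second-moment computation with the squared mean subtracted. The only difference is that you derive $\mathbb{E}((T_{m,0}^{\nu})^2)$ explicitly by differentiating $(1-\tilde{f}_{m,0}(\nu))/\nu$ in $\nu$, whereas the paper cites this second moment from Di Crescenzo \emph{et al.} (2008); your computation checks out and reproduces the three stated terms.
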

\begin{proof}
	As $T_{m,0}^{\theta,\alpha,\nu}$ is a non-negative continuous random variable, we have
	\begin{align}\label{2expTmo}
		\mathbb{E}((T_{m,0}^{\theta,\alpha,\nu})^2)
		&=2\int_{0}^{\infty}t\mathrm{Pr}\{T_{m,0}^{\theta,\alpha,\nu}>t\}\mathrm{d}t\nonumber\\
		&=2\int_{0}^{\infty}t\mathrm{Pr}\{T_{m,0}^{\nu}>Y_{\theta,\alpha}(t)\}\mathrm{d}t\nonumber\\
		&=2\int_{0}^{\infty}t\mathrm{Pr}\{D_{\theta,\alpha}(T_{m,0}^{\nu})>t\}\mathrm{d}t\nonumber\\
		&=\mathbb{E}((D_{\theta,\alpha}(T_{m,0}^{\nu}))^2)\nonumber\\
		&=\int_{0}^{\infty}\mathbb{E}((D_{\theta,\alpha}(x))^2)\mathrm{Pr}\{T_{m,0}^{\nu}\in\mathrm{d}x\}\nonumber\\
		&=\int_{0}^{\infty}\big(\alpha(1-\alpha)\theta^{\alpha-2}x+\alpha^2\theta^{2(\alpha-1)}x^2\big)\mathrm{Pr}\{T_{m,0}^{\nu}\in\mathrm{d}x\}\nonumber\\
		&=\alpha(1-\alpha)\theta^{\alpha-2}\mathbb{E}(T_{m,0}^{\nu})+\alpha^2\theta^{2(\alpha-1)}\mathbb{E}((T_{m,0}^{\nu})^2)
	\end{align}
	where we have used the second moment of tempered stable subordinator in the penultimate step (see Gupta {\it et al.} (2020), p. 948). On using the second moment of the first visit time to state zero for BDPC (see Di Crescenzo {\it et al.} (2008)) in \eqref{2expTmo}, we get the required result.
\end{proof}

\subsection{Effective catastrophe of tempered BDPC}
Recall from Section \ref{SecEffctCatastrophe} that a catastrophe which occurs at a non-zero state is called an effective catastrophe. 
We study the first occurrence time of an effective catastrophe $K_{m,0}^{\theta,\alpha,\nu}$ in tempered BDPC with $\mathscr{N}^{\theta,\alpha,\nu}(0)=m\in I$ by considering the time-changed variant of modified birth-death process with catastrophes $\{\mathcal{M}^{\nu}(t)\}_{t\geq0}$ defined in Section \ref{SecEffctCatastrophe}. That is,  $\mathcal{M}^{\theta,\alpha,\nu}(t)=\mathcal{M}^{\nu}(Y_{\theta,\alpha}(t))$, $t\geq0$. Here, the time-changing component considered is the inverse tempered stable subordinator $\{Y_{\theta,\alpha}(t)\}_{t\geq0}$ with $\theta>0$ as its tempering parameter and $0<\alpha<1$ is the stability index such that it is independent of $\{\mathcal{M}^{\nu}(t)\}_{t\geq0}$. 

Let  $q_{m,n}^{\theta,\alpha,\nu}(t)=\mathrm{Pr}\{\mathcal{M}^{\theta,\alpha,\nu}(t)=n|\mathcal{M}^{\theta,\alpha,\nu}(0)=m\}$, $m\in I$, $n\in I\cup\{-1\}$ be the state probabilities of $\{\mathcal{M}^{\theta,\alpha,\nu}(t)\}_{t\geq0}$. Thus,
\begin{equation}\label{Km0temp}
	\mathrm{Pr}\{K_{m,0}^{\theta,\alpha,\nu}>t\}=\int_{t}^{\infty}g_{m,0}^{\theta,\alpha,\nu}(x)\mathrm{d}x=\sum_{n=0}^{\infty}q_{m,n}^{\theta,\alpha,\nu}(t)=1-q_{m,-1}^{\theta,\alpha,\nu}(t),\,m\in I,
\end{equation}
where $g_{m,0}^{\theta,\alpha,\nu}$ is the pdf of $K_{m,0}^{\theta,\alpha,\nu}$.
On taking Laplace transform of
\begin{equation*}
	q_{m,n}^{\theta,\alpha,\nu}(t)=\int_{0}^{\infty}q_{m,n}^{\nu}(y)\mathrm{Pr}\{Y_{\theta,\alpha}(t)\in\mathrm{d}y\}
\end{equation*}
and by using \eqref{temperedPDFLT}, we obtain
\begin{equation}\label{qmntemp}
	\tilde{q}_{m,n}^{\theta,\alpha,\nu}(z)=\frac{\phi_{\theta,\alpha}(z)}{z}\tilde{q}_{m,n}^{\nu}(\phi_{\theta,\alpha}(z)).
\end{equation}
Further, on taking the Laplace transform of \eqref{DE5}, and by using \eqref{CaputotemperedLT} and \eqref{qmntemp}, we get
{\small\begin{align}\label{DE6}
		\left.
		\begin{aligned}
			\frac{\mathrm{d}^{\theta,\alpha}}{\mathrm{d}t^{\theta,\alpha}}q_{m,-1}^{\theta,\alpha,\nu}(t)&=\nu(1-q_{m,-1}^{\theta,\alpha,\nu}(t)-q_{m,0}^{\theta,\alpha,\nu}(t)),\\
			\frac{\mathrm{d}^{\theta,\alpha}}{\mathrm{d}t^{\theta,\alpha}}q_{m,0}^{\theta,\alpha,\nu}(t)&=-\lambda_{0}q_{m,0}^{\theta,\alpha,\nu}(t)+\mu_{1}q_{m,1}^{\theta,\alpha,\nu}(t),\\
			\frac{\mathrm{d}^{\theta,\alpha}}{\mathrm{d}t^{\theta,\alpha}}q_{m,n}^{\theta,\alpha,\nu}(t)&=-(\lambda_{n}+\mu_{n}+\nu)q_{m,n}^{\theta,\alpha,\nu}(t)+\lambda_{n-1}q_{m,n-1}^{\theta,\alpha,\nu}(t)+\mu_{n+1}q_{m,n+1}^{\theta,\alpha,\nu}(t),\,n=1,2,\dots
		\end{aligned}
		\right\}
\end{align}}
with initial condition $q_{m,n}^{\theta,\alpha,\nu}(0)=\delta_{m,n}$.

On using Eq. (11) and Eq. (12) of Di Crescenzo {\it et al.} (2008) in \eqref{qmntemp}, we have the following result:
\begin{theorem}
For $m,n\in I$, the Laplace transforms of state probabilities of $\{\mathcal{M}^{\theta,\alpha,\nu}(t)\}_{t\geq0}$ are
\begin{equation}\label{qm1ztemp}
	\tilde{q}^{\theta,\alpha,\nu}_{m,-1}(z)=\frac{\nu \phi_{\theta,\alpha}(z)}{z(\nu+\phi_{\theta,\alpha}(z))}\Bigg(\frac{1}{\phi_{\theta,\alpha}(z)}-\frac{\psi_{\theta,\alpha}^{\nu}(z)\tilde{\mathrm{p}}_{m,0}^{\theta,\alpha}\big((\nu+(z+\theta)^{\alpha})^{1/\alpha}-\theta\big)}{1-\nu\psi_{\theta,\alpha}^{\nu}(z)\tilde{\mathrm{p}}_{0,0}^{\theta,\alpha}\big((\nu+(z+\theta)^{\alpha})^{1/\alpha}-\theta\big)}\Bigg)
\end{equation}
and
\begin{align*}
	\tilde{q}^{\theta,\alpha,\nu}_{m,n}(z)&=\frac{\phi_{\theta,\alpha}(z)\psi_{\theta,\alpha}^{\nu}(z)}{z}\Bigg(\tilde{\mathrm{p}}_{m,n}^{\theta,\alpha}\big((\nu+(z+\theta)^{\alpha})^{1/\alpha}-\theta\big)\nonumber\\
	&\ \
	+\frac{\nu\psi_{\theta,\alpha}^{\nu}(z)\tilde{\mathrm{p}}_{m,0}^{\theta,\alpha}\big((\nu+(z+\theta)^{\alpha})^{1/\alpha}-\theta\big)\tilde{\mathrm{p}}_{0,n}^{\theta,\alpha}\big((\nu+(z+\theta)^{\alpha})^{1/\alpha}-\theta\big)}{1-\nu\psi_{\theta,\alpha}^{\nu}(z)\tilde{\mathrm{p}}_{0,0}^{\theta,\alpha}\big((\nu+(z+\theta)^{\alpha})^{1/\alpha}-\theta\big)} \Bigg),\,z>0.
\end{align*}
\end{theorem}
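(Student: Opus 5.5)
The plan mirrors the proof of Theorem \ref{thm2}, with the inverse stable subordinator replaced by the inverse tempered stable subordinator. Start from \eqref{qmntemp}, which gives $\tilde{q}_{m,n}^{\theta,\alpha,\nu}(z)=\frac{\phi_{\theta,\alpha}(z)}{z}\tilde{q}_{m,n}^{\nu}(\phi_{\theta,\alpha}(z))$. Into this, substitute Eq. (11) and Eq. (12) of Di Crescenzo \emph{et al.} (2008), evaluated at the argument $s=\phi_{\theta,\alpha}(z)$. These express $\tilde q^{\nu}_{m,-1}(s)$ and $\tilde q^{\nu}_{m,n}(s)$ through the BDP transforms $\tilde p_{\cdot,\cdot}(\nu+s)$. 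The result is the tempered analogue of \eqref{qm1zz} and \eqref{qmnzz}:
\begin{equation*}
\tilde{q}_{m,-1}^{\theta,\alpha,\nu}(z)=\frac{\nu\phi_{\theta,\alpha}(z)}{z(\nu+\phi_{\theta,\alpha}(z))}\Big(\frac{1}{\phi_{\theta,\alpha}(z)}-\frac{\tilde{p}_{m,0}(\nu+\phi_{\theta,\alpha}(z))}{1-\nu\tilde{p}_{0,0}(\nu+\phi_{\theta,\alpha}(z))}\Big),
\end{equation*}
together with the corresponding expression for $\tilde q_{m,n}^{\theta,\alpha,\nu}(z)$, which carries the prefactor $\phi_{\theta,\alpha}(z)/z$.

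Next, rewrite every $\tilde p_{i,j}(\nu+\phi_{\theta,\alpha}(z))$ in terms of the tempered BDP transform, using \eqref{pmntempz} from the proof of Proposition \ref{thm5.2}:
\begin{equation*}
\tilde{p}_{i,j}(w)=\frac{(w+\theta^\alpha)^{1/\alpha}-\theta}{w}\tilde{\mathrm{p}}_{i,j}^{\theta,\alpha}((w+\theta^{\alpha})^{1/\alpha}-\theta).
\end{equation*}
Take $w=\nu+\phi_{\theta,\alpha}(z)=(z+\theta)^\alpha+\nu-\theta^\alpha$. Then $w+\theta^\alpha=(z+\theta)^\alpha+\nu$, so the argument becomes $((z+\theta)^\alpha+\nu)^{1/\alpha}-\theta$. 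The prefactor $\frac{(w+\theta^\alpha)^{1/\alpha}-\theta}{w}$ is exactly $\psi^{\nu}_{\theta,\alpha}(z)$ from \eqref{psiEq}.

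Substituting these gives the formulas in the statement. For $n=-1$, one factor $\psi^{\nu}_{\theta,\alpha}$ appears in the numerator and one in the denominator term $\nu\psi^{\nu}_{\theta,\alpha}\tilde{\mathrm p}^{\theta,\alpha}_{0,0}$. For $n\in I$, the product $\tilde p_{0,n}\tilde p_{m,0}$ contributes $(\psi^{\nu}_{\theta,\alpha})^2$, while the denominator contributes one factor. Pulling a common $\psi^{\nu}_{\theta,\alpha}$ outside the bracket leaves the displayed structure, with overall prefactor $\phi_{\theta,\alpha}(z)\psi^{\nu}_{\theta,\alpha}(z)/z$.

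The main care point is the bookkeeping of the powers of $\psi^{\nu}_{\theta,\alpha}$, together with checking that Eq. (11)–(12) of Di Crescenzo \emph{et al.} are used at the correct Laplace argument. One must also confirm that \eqref{pmntempz} holds for $w>0$: it comes from inverting the relation $\tilde{\mathrm p}^{\theta,\alpha}_{i,j}(z)=\frac{\phi_{\theta,\alpha}(z)}{z}\tilde p_{i,j}(\phi_{\theta,\alpha}(z))$ via $z=(w+\theta^\alpha)^{1/\alpha}-\theta$. This inversion is valid because $\phi_{\theta,\alpha}$ is a bijection from $(0,\infty)$ onto $(0,\infty)$.
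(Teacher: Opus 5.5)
Your proposal is correct and is essentially the paper's argument. You substitute Eq. (11)--(12) of Di Crescenzo \emph{et al.} (2008) into \eqref{qmntemp} at the argument $\phi_{\theta,\alpha}(z)$, then rewrite each $\tilde p_{i,j}(\nu+\phi_{\theta,\alpha}(z))$ as $\psi^{\nu}_{\theta,\alpha}(z)\,\tilde{\mathrm p}^{\theta,\alpha}_{i,j}\big(((z+\theta)^{\alpha}+\nu)^{1/\alpha}-\theta\big)$ via \eqref{pmntempz}, mirroring the proof of Theorem \ref{thm2}; your counting of the powers of $\psi^{\nu}_{\theta,\alpha}$ is right, and you spell out the conversion step that the paper leaves implicit.
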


The proof of following result is along the similar lines to that of Theorem \ref{thm3.6}. Hence, it is omitted.

\begin{theorem}\label{thmEffctPdfLT}
The pdf of first occurrence time of an effective catastrophe in tempered BPDC has the following Laplace transform:
{\small\begin{equation*}
		\tilde{g}_{m,0}^{\theta,\alpha,\nu}(z)=\frac{\nu \phi_{\theta,\alpha}(z)}{\nu+\phi_{\theta,\alpha}(z)}\Big(\frac{1}{\phi_{\theta,\alpha}(z)}-\frac{\psi_{\theta,\alpha}^{\nu}(z)\tilde{f}_{m,0}({\nu+\phi_{\theta,\alpha}(z)})\tilde{\mathrm{p}}_{0,0}^{\theta,\alpha}\big((\nu+(z+\theta)^{\alpha})^{1/\alpha}-\theta\big)}{1-\nu\psi_{\theta,\alpha}^{\nu}(z) \tilde{\mathrm{p}}_{0,0}^{\theta,\alpha}\big((\nu+(z+\theta)^{\alpha})^{1/\alpha}-\theta\big)}\Big),\,z>0,
\end{equation*}}
where $\mathscr{N}^{\theta,\alpha,\nu}(0)=m\in I$.
\end{theorem}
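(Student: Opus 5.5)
The proof follows the same route as Theorem \ref{thm3.6}, with the inverse stable subordinator replaced by its tempered counterpart. By \eqref{Km0temp}, the survival function of $K_{m,0}^{\theta,\alpha,\nu}$ is $1-q_{m,-1}^{\theta,\alpha,\nu}(t)$, so
\begin{equation*}
g_{m,0}^{\theta,\alpha,\nu}(t)=\frac{\mathrm{d}}{\mathrm{d}t}q_{m,-1}^{\theta,\alpha,\nu}(t).
\end{equation*}
Since $q_{m,-1}^{\theta,\alpha,\nu}(0)=\delta_{m,-1}=0$ for $m\in I$, taking Laplace transforms gives
\begin{equation*}
\tilde{g}_{m,0}^{\theta,\alpha,\nu}(z)=z\,\tilde{q}_{m,-1}^{\theta,\alpha,\nu}(z).
\end{equation*}
Substituting \eqref{qm1ztemp} cancels the factor $z$ in the prefactor $\nu\phi_{\theta,\alpha}(z)/(z(\nu+\phi_{\theta,\alpha}(z)))$. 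This leaves exactly the prefactor $\nu\phi_{\theta,\alpha}(z)/(\nu+\phi_{\theta,\alpha}(z))$ and the bracket of the claimed formula, except that the bracket still contains $\tilde{\mathrm{p}}_{m,0}^{\theta,\alpha}(w)$. Here and below
\begin{equation*}
w=(\nu+(z+\theta)^{\alpha})^{1/\alpha}-\theta .
\end{equation*}

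The remaining step, the analogue of \eqref{bfpm0alphaa}, is to rewrite $\tilde{\mathrm{p}}_{m,0}^{\theta,\alpha}(w)$ as $\tilde{f}_{m,0}(\nu+\phi_{\theta,\alpha}(z))\,\tilde{\mathrm{p}}_{0,0}^{\theta,\alpha}(w)$.

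\emph{Renewal identity.} For the untimechanged BDP we use Eq. (24) of Di Crescenzo et al. (2008), the first-passage decomposition $p_{m,0}=f_{m,0}*p_{0,0}$:
\begin{equation*}
\tilde{p}_{m,0}(s)=\tilde{f}_{m,0}(s)\,\tilde{p}_{0,0}(s).
\end{equation*}

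\emph{Subordination relation.} For the tempered BDP, \eqref{temperedPDFLT} gives
\begin{equation*}
\tilde{\mathrm{p}}_{m,n}^{\theta,\alpha}(u)=\frac{\phi_{\theta,\alpha}(u)}{u}\,\tilde{p}_{m,n}(\phi_{\theta,\alpha}(u)).
\end{equation*}

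\emph{Key observation.} The argument $w$ is chosen so that
\begin{equation*}
\phi_{\theta,\alpha}(w)=(w+\theta)^{\alpha}-\theta^{\alpha}=\nu+(z+\theta)^{\alpha}-\theta^{\alpha}=\nu+\phi_{\theta,\alpha}(z).
\end{equation*}
Combining the three facts,
\begin{equation*}
\tilde{\mathrm{p}}_{m,0}^{\theta,\alpha}(w)=\frac{\phi_{\theta,\alpha}(w)}{w}\,\tilde{f}_{m,0}(\phi_{\theta,\alpha}(w))\,\tilde{p}_{0,0}(\phi_{\theta,\alpha}(w))=\tilde{f}_{m,0}(\nu+\phi_{\theta,\alpha}(z))\,\tilde{\mathrm{p}}_{0,0}^{\theta,\alpha}(w).
\end{equation*}
Substituting this into the numerator of the bracket yields the stated expression. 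The denominator $1-\nu\psi_{\theta,\alpha}^{\nu}(z)\tilde{\mathrm{p}}_{0,0}^{\theta,\alpha}(w)$ and the factor $\psi_{\theta,\alpha}^{\nu}(z)$ carry over unchanged.

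The main obstacle is bookkeeping rather than substance. One must check that the identity $\phi_{\theta,\alpha}(w)=\nu+\phi_{\theta,\alpha}(z)$ holds, which needs $(\nu+(z+\theta)^{\alpha})^{1/\alpha}>\theta$; this is true for $z>0$. One must also check that the normalizing factor $\phi_{\theta,\alpha}(w)/w$ appears identically in the numerator and the denominator. This is the tempered counterpart of the slightly careless step in \eqref{bfpm0alphaa}, so I would write it out explicitly rather than by analogy.

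Finally, I would justify differentiating under the integral and taking the Laplace transform of $\mathrm{d}q_{m,-1}^{\theta,\alpha,\nu}/\mathrm{d}t$. This follows because $q_{m,-1}^{\theta,\alpha,\nu}$ is a bounded, nondecreasing distribution function.
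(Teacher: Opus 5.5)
Your proposal is correct and follows the route the paper intends; the paper omits this proof as being along the lines of Theorem \ref{thm3.6}. You differentiate \eqref{Km0temp}, use $q_{m,-1}^{\theta,\alpha,\nu}(0)=0$ and substitute \eqref{qm1ztemp}, then rewrite $\tilde{\mathrm{p}}_{m,0}^{\theta,\alpha}(w)$ using the first-passage decomposition together with $\phi_{\theta,\alpha}(w)=\nu+\phi_{\theta,\alpha}(z)$. Writing this last step out explicitly also correctly evaluates $\tilde{\mathrm{p}}_{0,0}^{\theta,\alpha}$ at $w$ itself, which avoids the argument slip in \eqref{bfpm0alphaa}.
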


\begin{corollary}
If zero is an absorbing state of tempered BDPC then
\begin{equation*}
	\tilde{g}_{m,0}^{\theta,\alpha,\nu}(z)=\tilde{f}_{m,0}^{\theta,\alpha,\nu}(z)-\frac{\phi_{\theta,\alpha}(z)\tilde{f}_{m,0}(\nu+\phi_{\theta,\alpha}(z))}{(\nu+\phi_{\theta,\alpha}(z))(1-\nu\psi_{\theta,\alpha}^{\nu}(z)\tilde{\mathrm{p}}_{0,0}^{\theta,\alpha}\big((\nu+(z+\theta)^{\alpha})^{1/\alpha}-\theta\big))}.
\end{equation*}
\end{corollary}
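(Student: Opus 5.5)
The corollary follows by combining Theorem \ref{thmEffctPdfLT} with the preceding corollary for the Laplace transform of the first visit time to zero of the tempered BDPC, exactly as in the time-changed case. The key structural fact is that when zero is absorbing ($\lambda_0=0$), the tempered BDP started at zero stays at zero. Hence $\mathrm{p}_{0,0}^{\theta,\alpha}(t)=1$ and $\tilde{\mathrm{p}}_{0,0}^{\theta,\alpha}(s)=1/s$.

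Write $s=((\nu+(z+\theta)^{\alpha})^{1/\alpha}-\theta)$ and $w=\nu+\phi_{\theta,\alpha}(z)=(z+\theta)^{\alpha}+\nu-\theta^{\alpha}$. By \eqref{psiEq} we have $\psi_{\theta,\alpha}^{\nu}(z)=s/w$, so $\psi_{\theta,\alpha}^{\nu}(z)\tilde{\mathrm{p}}_{0,0}^{\theta,\alpha}(s)=1/w$.

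First, I expand the bracket in Theorem \ref{thmEffctPdfLT}. Its first term gives
\begin{equation*}
\frac{\nu\phi_{\theta,\alpha}(z)}{\nu+\phi_{\theta,\alpha}(z)}\cdot\frac{1}{\phi_{\theta,\alpha}(z)}=\frac{\nu}{\nu+\phi_{\theta,\alpha}(z)},
\end{equation*}
which is the first summand of $\tilde{f}_{m,0}^{\theta,\alpha,\nu}(z)$ in the preceding corollary. Adding and subtracting the second summand $\frac{\phi_{\theta,\alpha}(z)}{\nu+\phi_{\theta,\alpha}(z)}\tilde{f}_{m,0}(w)$ of that corollary gives
\begin{equation*}
\tilde{g}_{m,0}^{\theta,\alpha,\nu}(z)=\tilde{f}_{m,0}^{\theta,\alpha,\nu}(z)-\frac{\phi_{\theta,\alpha}(z)\tilde{f}_{m,0}(w)}{\nu+\phi_{\theta,\alpha}(z)}\Big(1+\frac{\nu\psi_{\theta,\alpha}^{\nu}(z)\tilde{\mathrm{p}}_{0,0}^{\theta,\alpha}(s)}{1-\nu\psi_{\theta,\alpha}^{\nu}(z)\tilde{\mathrm{p}}_{0,0}^{\theta,\alpha}(s)}\Big).
\end{equation*}

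Next, the bracket simplifies to $1/(1-\nu\psi_{\theta,\alpha}^{\nu}(z)\tilde{\mathrm{p}}_{0,0}^{\theta,\alpha}(s))$, which is the claimed formula. This is a purely algebraic identity $1+x/(1-x)=1/(1-x)$ applied to the term coming from Theorem \ref{thmEffctPdfLT}.

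The main point to watch is bookkeeping rather than any real obstacle. The $\tilde{f}_{m,0}$ factor in Theorem \ref{thmEffctPdfLT} carries the extra factor $\psi\tilde{\mathrm{p}}_{0,0}$, and this must be separated correctly from the $\tilde{f}_{m,0}(w)$ term of the previous corollary. Explicitly, one checks
\begin{equation*}
\frac{\nu\phi_{\theta,\alpha}(z)}{\nu+\phi_{\theta,\alpha}(z)}\cdot\frac{\psi\tilde{\mathrm{p}}_{0,0}\tilde{f}_{m,0}}{1-\nu\psi\tilde{\mathrm{p}}_{0,0}}=\frac{\phi_{\theta,\alpha}(z)\tilde{f}_{m,0}}{\nu+\phi_{\theta,\alpha}(z)}\cdot\frac{\nu\psi\tilde{\mathrm{p}}_{0,0}}{1-\nu\psi\tilde{\mathrm{p}}_{0,0}},
\end{equation*}
which is immediate.

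Absorption at zero is needed in two places. It is what makes the preceding corollary applicable, since it relies on \eqref{survTmoalphanutheta}. It also guarantees that the denominator $1-\nu/w=\phi_{\theta,\alpha}(z)/w$ is positive for $z>0$, so the final formula is well defined.
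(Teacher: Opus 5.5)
Your proof is correct and follows the paper's intended route, the tempered analogue of obtaining the earlier corollary by substituting Corollary \ref{Corr1} into Theorem \ref{thm3.6}: you split off $\nu/(\nu+\phi_{\theta,\alpha}(z))$, rewrite it using the corollary for $\tilde{f}_{m,0}^{\theta,\alpha,\nu}(z)$, and apply $1+x/(1-x)=1/(1-x)$. The identity itself does not need $\tilde{\mathrm{p}}_{0,0}^{\theta,\alpha}(s)=1/s$; absorption at zero is needed only to invoke the first-visit-time corollary, and, as you observe, to show that the denominator equals $\phi_{\theta,\alpha}(z)/(\nu+\phi_{\theta,\alpha}(z))>0$.
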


\begin{theorem}
The expectation of first occurrence of an effective catastrophe is given by
\begin{equation*}
	\mathbb{E}(K_{m,0}^{\theta,\alpha,\nu})=\alpha\theta^{\alpha-1}\Big(\frac{1}{\nu}+\frac{\tilde{p}_{m,0}(\nu)}{1-\nu\tilde{p}_{0,0}(\nu)}\Big).
\end{equation*}
\end{theorem}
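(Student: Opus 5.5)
Following the proof of the expected first visit time to state zero, I reduce to the untimed process. Let $K_{m,0}^{\nu}$ be the first occurrence time of an effective catastrophe in the BDPC $\{\mathcal{N}^{\nu}(t)\}_{t\geq0}$, i.e. the absorption time at $-1$ of $\{\mathcal{M}^{\nu}(t)\}_{t\geq0}$. Since $\mathcal{M}^{\theta,\alpha,\nu}(t)=\mathcal{M}^{\nu}(Y_{\theta,\alpha}(t))$, from \eqref{Km0temp} we get
\begin{equation*}
\mathrm{Pr}\{K_{m,0}^{\theta,\alpha,\nu}>t\}=1-q_{m,-1}^{\theta,\alpha,\nu}(t)=\int_{0}^{\infty}\mathrm{Pr}\{K_{m,0}^{\nu}>y\}\mathrm{Pr}\{Y_{\theta,\alpha}(t)\in\mathrm{d}y\}=\mathrm{Pr}\{K_{m,0}^{\nu}>Y_{\theta,\alpha}(t)\},
\end{equation*}
where independence of $Y_{\theta,\alpha}$ and $\mathcal{M}^{\nu}$ is used. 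Because $Y_{\theta,\alpha}$ is the inverse of the strictly increasing subordinator $D_{\theta,\alpha}$, we have $\{K_{m,0}^{\nu}>Y_{\theta,\alpha}(t)\}=\{D_{\theta,\alpha}(K_{m,0}^{\nu})>t\}$ up to null sets, exactly as in \eqref{expTmo}.

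Integrating the survival function over $t\in(0,\infty)$ then gives $\mathbb{E}(K_{m,0}^{\theta,\alpha,\nu})=\mathbb{E}(D_{\theta,\alpha}(K_{m,0}^{\nu}))$. Conditioning on $K_{m,0}^{\nu}$ and using $\mathbb{E}(D_{\theta,\alpha}(x))=\alpha\theta^{\alpha-1}x$ (Gupta et al. (2020)) yields
\begin{equation*}
\mathbb{E}(K_{m,0}^{\theta,\alpha,\nu})=\alpha\theta^{\alpha-1}\mathbb{E}(K_{m,0}^{\nu}).
\end{equation*}

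It remains to compute $\mathbb{E}(K_{m,0}^{\nu})=\int_0^\infty(1-q_{m,-1}^{\nu}(t))\,\mathrm{d}t=\lim_{s\to0^+}\big(1/s-\tilde q^{\nu}_{m,-1}(s)\big)$. Take the expression for $\tilde q^{\nu}_{m,-1}$ from Eq. (11) of Di Crescenzo et al. (2008), which is \eqref{qm1zz} with $\alpha=1$:
\begin{equation*}
\tilde q^{\nu}_{m,-1}(s)=\frac{\nu}{\nu+s}\Big(\frac1s-\frac{\tilde p_{m,0}(\nu+s)}{1-\nu\tilde p_{0,0}(\nu+s)}\Big).
\end{equation*}
Then $1/s-\tilde q^{\nu}_{m,-1}(s)=\frac{1}{\nu+s}+\frac{\nu}{\nu+s}\frac{\tilde p_{m,0}(\nu+s)}{1-\nu\tilde p_{0,0}(\nu+s)}$. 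Letting $s\to0$ gives $\frac1\nu+\frac{\tilde p_{m,0}(\nu)}{1-\nu\tilde p_{0,0}(\nu)}$, which after multiplying by $\alpha\theta^{\alpha-1}$ is the claim. Equivalently, one can obtain the same value as $-\frac{\mathrm{d}}{\mathrm{d}z}\tilde g^{\theta,\alpha,\nu}_{m,0}(z)|_{z=0}$ from Theorem \ref{thmEffctPdfLT}, using $\phi_{\theta,\alpha}'(0)=\alpha\theta^{\alpha-1}$, as a cross-check.

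The main obstacle is justifying this limit, in particular that $1-\nu\tilde p_{0,0}(\nu)>0$. Here $\tilde p_{0,0}$ is for the BDP without catastrophe, so $\nu\tilde p_{0,0}(\nu)=\int_0^\infty\nu e^{-\nu t}p_{0,0}(t)\,\mathrm{d}t<1$ whenever $p_{0,0}\not\equiv1$, i.e. $\lambda_0>0$. In the degenerate case $\lambda_0=0$ the process started at $0$ never sees an effective catastrophe, so the expectation is infinite, consistent with the formula blowing up. The monotone convergence step that interchanges the limit with the integral is standard.
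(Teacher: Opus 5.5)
Your argument is correct and follows the paper's proof. The paper uses the same chain $\mathrm{Pr}\{K_{m,0}^{\theta,\alpha,\nu}>t\}=\mathrm{Pr}\{D_{\theta,\alpha}(K_{m,0}^{\nu})>t\}$ to get $\mathbb{E}(K_{m,0}^{\theta,\alpha,\nu})=\alpha\theta^{\alpha-1}\mathbb{E}(K_{m,0}^{\nu})$, and then inserts the BDPC mean. The only difference is that you derive $\mathbb{E}(K_{m,0}^{\nu})=\lim_{s\to0^+}(1/s-\tilde{q}^{\nu}_{m,-1}(s))$ explicitly from Eq. (11) of Di Crescenzo et al. (2008), whereas the paper simply quotes it; you also correctly write $1-q_{m,-1}$ where the paper's display has the typo $q_{m,0}$.
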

\begin{proof}
As $K_{m,0}^{\theta,\alpha,\nu}$ is a non-negative continuous random variable, we have 
{\small\begin{align}\label{EffectExpct}
	\mathbb{E}(K_{m,0}^{\theta,\alpha,\nu})
	&=\int_{0}^{\infty}\mathrm{Pr}\{K_{m,0}^{\theta,\alpha,\nu}>t\}\mathrm{d}t\nonumber\\
	&=\int_{0}^{\infty}(1-q_{m,0}^{\theta,\alpha,\nu}(t))\mathrm{d}t\ \ (\text{from \eqref{Km0temp}})\nonumber\\
	&=\int_{0}^{\infty}\int_{0}^{\infty}(1-q^{\nu}_{m,0}(y))\mathrm{Pr}\{Y_{\theta,\alpha}(t)\in\mathrm{d}y\}\mathrm{d}t\nonumber\\
	&=\int_{0}^{\infty}\int_{0}^{\infty}\mathrm{Pr}\{K_{m,0}^{\nu}>y\}\mathrm{Pr}\{Y_{\theta,\alpha}(t)\in\mathrm{d}y\}\mathrm{d}t\ \ (\text{see Di Crescenzo {\it et al.} (2008), Eq. (9)})\nonumber\\
	&=\int_{0}^{\infty}\mathrm{Pr}\{K_{m,0}^{\nu}>Y_{\theta,\alpha}(t)\}\mathrm{d}t\nonumber\\
	&=\int_{0}^{\infty}\mathrm{Pr}\{D_{\theta,\alpha}(K_{m,0}^{\nu})>t\}\mathrm{d}t\nonumber\\ &=\mathbb{E}(D_{\theta,\alpha}(K_{m,0}^{\nu}))\nonumber\\
	&=\int_{0}^{\infty}\mathbb{E}(D_{\theta,\alpha}(x))\mathrm{Pr}\{K_{m,0}^{\nu}\in\mathrm{d}x\}\nonumber\\
	&=\alpha\theta^{\alpha-1}\mathbb{E}(K_{m,0}^{\nu}),
\end{align}}
where we have used the expectation of tempered stable subordinator (see Gupta {\it et al.} (2020), p. 948). On using the expectation of first occurrence time of an effective catastrophe in BDPC $\{\mathcal{N}^{\nu}(t)\}_{t\geq0}$, we get the required result.
\end{proof}

\begin{theorem}
The variance of first occurrence time of an effective catastrophe in tempered BDPC is given by
{\footnotesize\begin{align*}
	\mathrm{Var}(K_{m,0}^{\theta,\alpha,\nu})
	&=\alpha(1-\alpha)\theta^{\alpha-2}\Big(\frac{1}{\nu}+\frac{\tilde{p}_{m,0}(\nu)}{1-\nu\tilde{p}_{0,0}(\nu)}\Big)\\
	&\ \ +\frac{\alpha^{2}\theta^{2\alpha-2}}{\nu^{2}}\Big(1-\Big(\frac{\nu\tilde{p}_{m,0}(\nu)}{1-\nu\tilde{p}_{0,0}(\nu)}\Big)^2
   -\frac{2\nu^2}{1-\nu\tilde{p}_{0,0}(\nu)}\frac{\mathrm{d}}{\mathrm{d}\nu}\tilde{p}_{m,0}(\nu)-\frac{2\nu^3\tilde{p}_{m,0}(\nu)}{(1-\nu\tilde{p}_{0,0}(\nu))^2}\frac{\mathrm{d}}{\mathrm{d}\nu}\tilde{p}_{0,0}(\nu)\Big).
\end{align*}}
\end{theorem}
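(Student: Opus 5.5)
The plan is to reduce the variance of $K_{m,0}^{\theta,\alpha,\nu}$ to moments of the untimed effective-catastrophe time $K_{m,0}^{\nu}$ of the BDPC. This is the same subordination argument used in the proof of the expectation of $K_{m,0}^{\theta,\alpha,\nu}$ and in the variance of $T_{m,0}^{\theta,\alpha,\nu}$.

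\emph{Step 1: second moment by subordination.} Starting from \eqref{Km0temp}, write $\mathrm{Pr}\{K_{m,0}^{\theta,\alpha,\nu}>t\}=\mathrm{Pr}\{K_{m,0}^{\nu}>Y_{\theta,\alpha}(t)\}=\mathrm{Pr}\{D_{\theta,\alpha}(K_{m,0}^{\nu})>t\}$, using Eq. (9) of Di Crescenzo {\it et al.} (2008) and the independence of $Y_{\theta,\alpha}$ and $\mathcal{M}^{\nu}$. Then
\begin{equation*}
\mathbb{E}((K_{m,0}^{\theta,\alpha,\nu})^2)=2\int_0^\infty t\,\mathrm{Pr}\{D_{\theta,\alpha}(K_{m,0}^{\nu})>t\}\,\mathrm{d}t=\mathbb{E}\big((D_{\theta,\alpha}(K_{m,0}^{\nu}))^2\big).
\end{equation*}
Conditioning on $K_{m,0}^{\nu}=x$ and using $\mathbb{E}(D_{\theta,\alpha}(x)^2)=\alpha(1-\alpha)\theta^{\alpha-2}x+\alpha^2\theta^{2(\alpha-1)}x^2$ (Gupta {\it et al.} (2020)), as in \eqref{2expTmo}, gives
\begin{equation*}
\mathbb{E}((K_{m,0}^{\theta,\alpha,\nu})^2)=\alpha(1-\alpha)\theta^{\alpha-2}\mathbb{E}(K_{m,0}^{\nu})+\alpha^2\theta^{2(\alpha-1)}\mathbb{E}((K_{m,0}^{\nu})^2).
\end{equation*}
Subtracting the square of the previous theorem's mean, $\alpha^2\theta^{2(\alpha-1)}(\mathbb{E}K_{m,0}^{\nu})^2$, yields
\begin{equation*}
\mathrm{Var}(K_{m,0}^{\theta,\alpha,\nu})=\alpha(1-\alpha)\theta^{\alpha-2}\,\mathbb{E}(K_{m,0}^{\nu})+\alpha^2\theta^{2\alpha-2}\,\mathrm{Var}(K_{m,0}^{\nu}).
\end{equation*}
The first term is already the first line of the claimed formula, since $\mathbb{E}(K_{m,0}^{\nu})=\frac{1}{\nu}+\frac{\tilde p_{m,0}(\nu)}{1-\nu\tilde p_{0,0}(\nu)}$.

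\emph{Step 2: moments of $K_{m,0}^{\nu}$ from its Laplace transform.} Take the $\alpha=1$ case of Theorem \ref{thm3.6}, i.e. Proposition 3.1 of Di Crescenzo {\it et al.} (2008):
\begin{equation*}
\tilde g_{m,0}^{\nu}(z)=\frac{\nu}{\nu+z}-\frac{\nu z}{\nu+z}R(\nu+z),\qquad R(s)=\frac{\tilde p_{m,0}(s)}{1-\nu\tilde p_{0,0}(s)},
\end{equation*}
where $\nu$ is held fixed inside $R$ and only the argument varies. Expand around $z=0$ to second order: $\frac{\nu}{\nu+z}=1-\frac{z}{\nu}+\frac{z^2}{\nu^2}-\dots$ and $\frac{\nu z}{\nu+z}R(\nu+z)=zR(\nu)+z^2\big(R'(\nu)-R(\nu)/\nu\big)+\dots$. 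Reading off coefficients, $-\tilde g'(0)=\frac1\nu+R(\nu)$, which confirms the mean. Also $\tilde g''(0)=2\big(\frac1{\nu^2}+\frac{R(\nu)}{\nu}-R'(\nu)\big)$, where $R'$ is the derivative in the argument $s$ at $s=\nu$. The variance is then
\begin{equation*}
\tilde g''(0)-\tilde g'(0)^2=\frac{1}{\nu^2}\Big(1-\nu^2R(\nu)^2-2\nu^2R'(\nu)\Big).
\end{equation*}
Finally, $R'(\nu)=\frac{\tilde p_{m,0}'(\nu)}{1-\nu\tilde p_{0,0}(\nu)}+\frac{\nu\tilde p_{m,0}(\nu)\tilde p_{0,0}'(\nu)}{(1-\nu\tilde p_{0,0}(\nu))^2}$, which reproduces exactly the bracket in the statement.

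\emph{Main obstacle.} The delicate point is Step 2's bookkeeping. The symbol $\frac{\mathrm{d}}{\mathrm{d}\nu}\tilde p_{m,0}(\nu)$ in the statement must be read as the derivative of the Laplace transform evaluated at $\nu$, not as a derivative through the parameter $\nu$ appearing in $1-\nu\tilde p_{0,0}$. I would keep $\nu$ frozen in $R$ and differentiate only in the argument $s$. A second point is justifying differentiation of $\tilde g$ at $0$, i.e. that $K_{m,0}^{\nu}$ has finite second moment. This holds because $K_{m,0}^{\nu}$ is dominated by an exponential-type tail once $\nu>0$, so the Laplace transform is analytic in a neighbourhood of $0$.
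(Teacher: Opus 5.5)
Your proposal is correct and follows the paper's route. The subordination identity $\mathbb{E}\big((K_{m,0}^{\theta,\alpha,\nu})^2\big)=\alpha(1-\alpha)\theta^{\alpha-2}\mathbb{E}(K_{m,0}^{\nu})+\alpha^2\theta^{2\alpha-2}\mathbb{E}\big((K_{m,0}^{\nu})^2\big)$ is exactly the paper's argument; where the paper simply invokes the known moments of $K_{m,0}^{\nu}$, you derive them by a second-order expansion of Proposition 3.1 of Di Crescenzo et al. (2008), and that computation checks out.

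One hypothesis should be stated explicitly: $\lambda_0>0$, which the statement also tacitly assumes. You need it both for $1-\nu\tilde{p}_{0,0}(\nu)>0$ and for your exponential-tail claim, since for $\lambda_0=0$ and $m>0$ the variable $K_{m,0}^{\nu}$ is infinite with positive probability.
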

\begin{proof}
As $K_{m,0}^{\theta,\alpha,\nu}$ is a non-negative continuous random variable, we have
\begin{align}
		\mathbb{E}((K_{m,0}^{\theta,\alpha,\nu})^2)
		&=2\int_{0}^{\infty}t\mathrm{Pr}\{K_{m,0}^{\theta,\alpha,\nu}>t\}\mathrm{d}t\nonumber\\
		&=2\int_{0}^{\infty}t\mathrm{Pr}\{K_{m,0}^{\nu}>Y_{\theta,\alpha}(t)\}\mathrm{d}t\nonumber\\
		&=2\int_{0}^{\infty}t\mathrm{Pr}\{D_{\theta,\alpha}(K_{m,0}^{\nu})>t\}\mathrm{d}t\nonumber\\ &=\mathbb{E}((D_{\theta,\alpha}(K_{m,0}^{\nu}))^2)\nonumber\\
		&=\int_{0}^{\infty}\mathbb{E}((D_{\theta,\alpha}(x))^2)\mathrm{Pr}\{K_{m,0}^{\nu}\in\mathrm{d}x\}\nonumber\\
		&=\alpha(1-\alpha)\theta^{\alpha-2}\mathbb{E}(K_{m,0}^{\nu})+\alpha^2\theta^{2\alpha-2}\mathbb{E}((K_{m,0}^{\nu})^2),
\end{align}
where we have used $\mathbb{E}(D_{\theta,\alpha}(x))=\alpha\theta^{\alpha-1}x$ and $\mathrm{Var}(D_{\theta,\alpha}(x))=\alpha(1-\alpha)\theta^{\alpha-2}x$ (see Gupta {\it et al.} (2020), p. 948). Thus, we have the required result.
\end{proof}

\subsection{Tempered linear birth-death process with catastrophe}
Here, we study a tempered variant of the linear birth-death process with catastrophe, namely, the tempered LBDPC. The tempered BDPC reduces to the tempered LBDPC upon setting $\lambda_n=n\lambda$ and $\mu_{n}=n\mu$ for $n\in I= \{0,1,2,\dots\}$. Consequently, the state zero becomes an absorbing state.

For $m,n\in I$, let $\mathbf{p}_{m,n}^{\theta,\alpha,\nu}(t)=\mathrm{Pr}\{\mathbf{N}^{\theta,\alpha,\nu}(t)=n|\mathbf{N}^{\theta,\alpha,\nu}(0)=m\}$ be the state probabilities of tempered LBDPC $\{\mathbf{N}^{\theta,\alpha,\nu}(t)\}_{t\geq0}$. Also, let $\{\mathbf{N}^{\theta,\alpha}(t)\}_{t\geq0}$ be the corresponding tempered linear birth-death process without catastrophe, abbreviated as the tempered LBDP. We denote its state probabilities by
$\mathbf{p}_{m,n}^{\theta,\alpha}(t)=\mathrm{Pr}\{\mathbf{N}^{\theta,\alpha}(t)=n|\mathbf{N}^{\theta,\alpha}(0)=m\}$. 

The governing system of differential equations for the state probabilities of tempered LBDPC is given by
{\footnotesize\begin{align}\label{DE7temp}
		\left.
		\begin{aligned}
			\frac{\mathrm{d}^{\theta,\alpha}}{\mathrm{d}t^{\theta,\alpha}}\mathbf{p}_{m,0}^{\theta,\alpha,\nu}(t)(t)
			&=-\nu \mathbf{p}_{m,0}^{\theta,\alpha,\nu}(t)+\mu \mathbf{p}_{m,1}^{\theta,\alpha,\nu}(t)+\nu,\\
			\frac{\mathrm{d}^{\theta,\alpha}}{\mathrm{d}t^{\theta,\alpha}}\mathbf{p}_{m,n}^{\theta,\alpha,\nu}(t)
			&=-(n\lambda+n\mu+\nu)\mathbf{p}_{m,n}^{\theta,\alpha,\nu}(t)+(n-1)\lambda \mathbf{p}_{m,n-1}^{\theta,\alpha,\nu}(t) +(n+1)\mu \mathbf{p}_{m,n+1}^{\theta,\alpha,\nu}(t),\,n\geq1
		\end{aligned}
		\right\}
\end{align}}
with initial condition $\mathbf{p}_{m,n}^{\theta,\alpha,\nu}(0)=\delta_{m,n}$.

 As zero is an absorbing state in the tempered LBDPC, we have $\mathbf{p}_{0,0}^{\theta,\alpha,\nu}(t)=1$. So, $\mathbf{p}_{0,n}^{\theta,\alpha,\nu}(t)=0$, $n\geq1$. 
On using the explicit expressions of the state probabilities of LBDP (see Feller (1968)) in Proposition \ref{thm5.2}, we derive the following results:

These are stated without proofs as the proofs of these results follow along the lines to that of similar results in the previous sections.

For $\lambda=\mu$, the extinction probability of tempered LBDPC is given by
{\small\begin{align*}
		\mathbf{p}_{1,0}^{\theta,\alpha,\nu}(t)
		&=\lambda e^{-\theta t}\sum_{m=0}^{\infty}\theta^{m}
		\int_{0}^{\infty}e^{-x}
		\Big(t^{\alpha+m}E_{\alpha,\alpha+m+1}^{2}((\theta^{\alpha}-\lambda x-\nu)t^{\alpha})\nonumber\\
		&\ \
		-\theta^{\alpha}t^{2\alpha+m}E_{\alpha,2\alpha+m+1}^{2}((\theta^{\alpha}-\lambda x-\nu)t^{\alpha})
		\Big)\mathrm{d}x +\nu e^{-\theta t}\sum_{n=0}^{\infty}(\theta t)^n t^{\alpha}E_{\alpha,\alpha+n+1}((\theta^{\alpha}-\nu)t^{\alpha}),
\end{align*}}
for $\lambda>\mu$, it is given by
{\small\begin{align*}
		\mathbf{p}_{1,0}^{\theta,\alpha,\nu}(t)
		&=\frac{\mu e^{-\theta t}}{\lambda}\sum_{m=0}^{\infty}\sum_{k=0}^{\infty}(-\nu t^{\alpha})^m(\theta t)^k E_{\alpha,m\alpha+k+1}^{m}(\theta^{\alpha}t^{\alpha})-e^{-\theta t}\Big(\frac{\lambda-\mu}{\lambda}\Big)\sum_{r=1}^{\infty}\sum_{m=0}^{\infty}\sum_{k=0}^{\infty}\Big(\frac{\mu}{\lambda}\Big)^r\nonumber\\
		&\ \ \cdot(-(\nu+(\lambda-\mu)r)t^{\alpha})^m(\theta t)^k E_{\alpha,m\alpha+k+1}^m(\theta^{\alpha}t^{\alpha})+\nu e^{-\theta t}\sum_{n=0}^{\infty}(\theta t)^n t^{\alpha}E_{\alpha,\alpha+n+1}((\theta^{\alpha}-\nu)t^{\alpha}),
\end{align*}}
and for $\lambda<\mu$, it has the following form:
{\small\begin{align*}
		\mathbf{p}_{1,0}^{\theta,\alpha,\nu}(t)
		&=e^{-\theta t}\sum_{m=0}^{\infty}\sum_{k=0}^{\infty}(-\nu t^{\alpha})^m(\theta t)^k E_{\alpha,m\alpha+k+1}^{m}(\theta^{\alpha}t^{\alpha})-e^{-\theta t}\Big(\frac{\mu-\lambda}{\lambda}\Big)\sum_{r=1}^{\infty}\sum_{m=0}^{\infty}\sum_{k=0}^{\infty}\Big(\frac{\lambda}{\mu}\Big)^r\nonumber\\
		&\ \ \cdot(-(\nu+(\mu-\lambda)r)t^{\alpha})^m(\theta t)^k E_{\alpha,m\alpha+k+1}^m(\theta^{\alpha}t^{\alpha})+\nu e^{-\theta t}\sum_{n=0}^{\infty}(\theta t)^n t^{\alpha}E_{\alpha,\alpha+n+1}((\theta^{\alpha}-\nu)t^{\alpha}).
\end{align*}}

Similarly, for any $n\geq1$, the state probability $\mathbf{p}_{1,n}^{\theta,\alpha,\nu}(t)$ of tempered LBDPC is given as follows:

For the case $\lambda=\mu$, we have
{\small\begin{align*}
	\mathbf{p}_{1,n}^{\theta,\alpha,\nu}(t)
	&=\frac{(-\lambda)^{n-1}}{n!}e^{-\theta t}\sum_{m=0}^{\infty}\sum_{k=0}^{\infty}\frac{\mathrm{d}^n}{\mathrm{d}\lambda^n}
	\Big(\lambda\int_{0}^{\infty}e^{-x}((\theta^{\alpha}-\lambda x-\nu)t^{\alpha})^{m}(\theta t)^{k}E_{\alpha,m\alpha+k+1}^{m}(\theta^{\alpha}t^{\alpha})\mathrm{d}x
	\Big),
\end{align*}	}
for $\lambda>\mu$, it is given by
\begin{align*}
	\mathbf{p}_{1,n}^{\theta,\alpha,\nu}(t)
	&=e^{-\theta t}\Big(\frac{\lambda-\mu}{\lambda}\Big)^2\sum_{l=0}^{\infty}\sum_{m=0}^{\infty}\sum_{k=0}^{\infty}\sum_{r=0}^{n-1}(-1)^r \Big(\frac{\mu}{\lambda}\Big)^l \binom{n+l}{l}\binom{n-1}{r}\nonumber\\
	&\hspace{3cm} \cdot(-(\nu+(\lambda-\mu)(l+r+1))t^{\alpha})^m (\theta t)^k E_{\alpha,m\alpha+k+1}^m(\theta^{\alpha}t^{\alpha}),
\end{align*}
and for $\lambda<\mu$, it has the following form:
\begin{align*}
	\mathbf{p}_{1,n}^{\theta,\alpha,\nu}(t)
	&=e^{-\theta t}\Big(\frac{\lambda}{\mu}\Big)^{n-1}\Big(\frac{\mu-\lambda}{\mu}\Big)^2\sum_{l=0}^{\infty}\sum_{m=0}^{\infty}\sum_{k=0}^{\infty}\sum_{r=0}^{n-1}(-1)^r \Big(\frac{\lambda}{\mu}\Big)^l \binom{n+l}{l}\binom{n-1}{r}\nonumber\\
	&\hspace{4cm} \cdot(-(\nu+(\mu-\lambda)(l+r+1))t^{\alpha})^m (\theta t)^k E_{\alpha,m\alpha+k+1}^m(\theta^{\alpha}t^{\alpha}).
\end{align*}

For the tempered LBDPC, the probability generating function is given by
\begin{equation}\label{pgftempp}
	G_{m}^{\theta,\alpha,\nu}(x,t)=\sum_{n=0}^{\infty}x^{n}\mathbf{p}_{m,n}^{\theta,\alpha,\nu}(t),\,t\geq0,\,|x|\leq1.
\end{equation}
On taking the Caputo tempered fractional derivative on both sides of \eqref{pgftempp} and using \eqref{DE7temp}, we get
\begin{equation}\label{pgfDEtemp}
	\frac{\partial^{\theta,\alpha}}{\partial t^{\theta,\alpha}}G_{m}^{\theta,\alpha,\nu}(x,t)=\frac{\partial}{\partial x}G_{m}^{\theta,\alpha,\nu}(x,t)(\lambda x-\mu)(x-1)+\nu(1-G_{m}^{\theta,\alpha,\nu}(x,t)),
\end{equation}
with initial condition $G_{m}^{\theta,\alpha,\nu}(x,0)=x^{m}$. 

From \eqref{pgfDEtemp}, we obtain the expectation of tempered LBDPC in the following form:
\begin{equation*}
	\mathbb{E}(\mathbf{N}^{\theta,\alpha,\nu}(t))=e^{-\theta t}\sum_{m=0}^{\infty}\sum_{k=0}^{\infty}((\lambda-\mu-\nu)t^{\alpha})^m (\theta t)^k E_{\alpha,m\alpha+k+1}^{m}(\theta^{\alpha} t^{\alpha}),
\end{equation*}
and its variance, for $\lambda\neq\mu$, is given by
{\small\begin{align*}
	Var(\mathbf{N}^{\theta,\alpha,\nu}(t))
	&=\frac{2\lambda}{\lambda-\mu}e^{-\theta t}\sum_{m=0}^{\infty}\sum_{k=0}^{\infty}((2\lambda-2\mu-\nu)t^{\alpha})^m (\theta t)^k E_{\alpha,m\alpha+k+1}^{m}(\theta^{\alpha} t^{\alpha})\nonumber\\
	&\ \ - \Big(\frac{\lambda+\mu}{\lambda-\mu}\Big)e^{-\theta t}\sum_{m=0}^{\infty}\sum_{k=0}^{\infty}((\lambda-\mu-\nu)t^{\alpha})^m (\theta t)^k E_{\alpha,m\alpha+k+1}^{m}(\theta^{\alpha} t^{\alpha}) 
	- \big(\mathbb{E}(\mathbf{N}^{\theta,\alpha,\nu}(t))\big)^2
\end{align*}}
and for $\lambda=\mu$, it is given by
\begin{align*}
	Var(\mathbf{N}^{\theta,\alpha,\nu}(t))
	&=2\lambda e^{-\theta t}\sum_{r=0}^{\infty}(\theta t)^{r}\Big(t^{\alpha}E_{\alpha,\alpha+r+1}^{2}((\theta^{\alpha}-\nu)t^{\alpha})-\theta^{\alpha}t^{2\alpha}E_{\alpha,2\alpha+r+1}^{2}((\theta^{\alpha}-\nu)t^{\alpha})	\Big)\nonumber\\
	& \hspace{2.3cm} +e^{-\theta t}\sum_{m=0}^{\infty}\sum_{k=0}^{\infty}(-\nu t^{\alpha})^{m}(\theta t)^{k}E_{\alpha,m\alpha+k+1}^{m}(\theta^{\alpha}t^{\alpha})-\big(\mathbb{E}(\mathbf{N}^{\theta,\alpha,\nu}(t))\big)^2,
\end{align*}
where we have taken $\mathbf{p}_{1,1}^{\theta,\alpha,\nu}(0)=1$.

\section*{Acknowledgement}
The research of second author was supported by a UGC fellowship, NTA reference no. 231610158041, Govt. of India.

\end{document}